\documentclass[a4paper,12pt,draft]{article}

\usepackage{amsthm}
\usepackage{latexsym}
\usepackage{dsfont}
\usepackage{bbm}
\usepackage{amssymb}
\usepackage{amsmath}
\usepackage{graphicx}

\numberwithin{equation}{section}

\theoremstyle{plain}
\newtheorem{Thm}{Theorem}[section]
\newtheorem{Lemma}[Thm]{Lemma}

\newtheorem{Prop}[Thm]{Proposition}
\theoremstyle{remark}
\newtheorem{Rem}[Thm]{Remark}
\theoremstyle{definition}

\newtheorem{Con}[Thm]{Condition}

\newcommand{\N}{\mathbb{N}}
\newcommand{\integers}{\mathbb{Z}}

\newcommand{\R}{\mathbb{R}}

\newcommand{\Prob}{\mathbb{P}}
\newcommand{\Prm}{\mathrm{P}}
\DeclareMathOperator{\Var}{\mathrm{Var}}
\newcommand{\E}{\mathbb{E}}

\newcommand{\A}{\mathcal{A}}
\newcommand{\B}{\mathcal{B}}
\newcommand{\I}{\mathcal{I}}
\newcommand{\J}{\mathcal{J}}
\newcommand{\F}{\mathcal{F}}
\newcommand{\G}{\mathcal{G}}

\renewcommand{\H}{\mathcal{H}}
\renewcommand{\L}{\mathcal{L}}

\newcommand{\Z}{\mathcal{Z}}
\DeclareMathOperator{\iZ}{^{\mathit{i}}\!\mathcal{Z}}
\DeclareMathOperator{\jZ}{^{\mathit{j}}\!\mathcal{Z}}
\DeclareMathOperator{\oneZ}{^{1}\!\mathcal{Z}}
\DeclareMathOperator{\iN}{^{\mathit{i}}\!\mathit{N}}
\DeclareMathOperator{\jN}{^{\mathit{j}}\!\mathit{N}}

\DeclareMathOperator{\iX}{^{\mathit{i}}\!\mathit{X}}
\DeclareMathOperator{\jX}{^{\mathit{j}}\!\mathit{X}}
\DeclareMathOperator{\iW}{^{\mathit{i}}\!\mathit{W}}

\DeclareMathOperator{\iWone}{^{\mathit{i}}\!\mathit{W}_{1}}
\DeclareMathOperator{\iWn}{^{\mathit{i}}\!\mathit{W}_{\mathit{n}}}
\DeclareMathOperator{\jWn}{^{\mathit{j}}\!\mathit{W}_{\mathit{n}}}
\DeclareMathOperator{\oneW}{^{1}\!\mathit{W}}
\DeclareMathOperator{\itau}{^{\mathit{i}}\!\tau}
\DeclareMathOperator{\jtau}{^{\mathit{j}}\!\tau}

\DeclareMathOperator{\imu}{^{\mathit{i}}\!\mu}
\DeclareMathOperator{\jmu}{^{\mathit{j}}\!\mu}
\DeclareMathOperator{\onemu}{^{1}\!\mu}
\DeclareMathOperator{\im}{^{\mathit{i}}\!\mathit{m}}
\DeclareMathOperator{\jm}{^{\mathit{j}}\!\mathit{m}}
\DeclareMathOperator{\iphi}{^{\mathit{i}}\!\overline{\phi}}

\newcommand{\bfM}{\mathbf{M}}
\newcommand{\bS}{\mathbf{S}}
\newcommand{\bT}{\mathbf{T}}
\newcommand{\bu}{\mathbf{u}}
\newcommand{\bv}{\mathbf{v}}

\newcommand{\1}{\mathbbm{1}}
\newcommand{\transp}{\textsf{T}}

\newcommand{\comp}{\mathsf{c}}

\newcommand{\dy}{\mathrm{d}\mathit{y}}

\newcommand{\ds}{\mathrm{d}\mathit{s}}
\newcommand{\dt}{\mathrm{d}\mathit{t}}
\newcommand{\du}{\mathrm{d}\mathit{u}}

\renewcommand{\dj}{\mathrm{d}\mathit{j}}

\setlength{\topmargin}{-1.5cm} \setlength{\footskip}{1cm}
\setlength{\oddsidemargin}{0.4cm} \setlength{\textheight}{25.6cm}
\setlength{\textwidth}{16.5cm}

\title{Rate of convergence in the law of large numbers for supercritical general multi-type branching processes}
\author{Alexander Iksanov and Matthias Meiners}
\begin{document}

\thispagestyle{empty}
\maketitle

\begin{abstract}
\vspace{0,1cm}
We provide sufficient conditions for polynomial rate of convergence in the weak law of large numbers
for supercritical general indecomposable multi-type branching processes.
The main result is derived
by investigating the embedded single-type process composed of all individuals having the same type as the ancestor.
As an important intermediate step, we determine the (exact) polynomial rate of convergence of
Nerman's martingale in continuous time to its limit.
The techniques used also allow us to give streamlined proofs of the weak and strong laws of large numbers
and ratio convergence for the processes in focus.

\vspace{0.05cm}
\noindent
\emph{Keywords:} Markov renewal theory $\cdot$ supercritical general multi-type branching process

\noindent
2010 Mathematics Subject Classification: Primary: 60J80 \\							
\hphantom{2010 Mathematics Subject Classification:} Secondary:  60K15			
\end{abstract}

\section{Introduction} \label{sec:Intro}

In the present paper, we derive sufficient conditions for polynomial rate of convergence in the weak law of large numbers
for supercritical general indecomposable multi-type branching processes.
As a by-product of our analysis, we give new proofs of the weak and strong laws of large numbers and ratio convergence for these processes
based on the corresponding results for single-type processes.

\subsection{Model description}  \label{subsec:model description}

Let $\N := \{1,2,\ldots\}$ denote the set of positive integers
and $\N^0 = \{\varnothing\}$ the set that contains the empty tuple only.
Define $\I := \bigcup_{n \geq 0} \N^n$ to be the set of finite tuples of positive integers.
Members of $\I$ are called (potential) individuals and are typically denoted by the letters $x,y,z$.
If $x=(x_1,\ldots,x_n)$, we write $|x|=n$ and call $n$ the generation of $x$.
If $y=(y_1,\ldots,y_m)$, then we write $xy$ for $(x_1,\ldots,x_n,y_1,\ldots,y_m)$.
$x|_k$ is defined as the `ancestor of $x$ in the $k$th generation', that is, $x|_k=(x_1,\ldots,x_k)$ if $k \leq |x|$.
$x \prec y$ means that $|x| < |y|$ and $y|_{|x|} = x$ while $x \preceq y$ means that either $x \prec y$ or $x = y$.
If $J \subseteq \I$ is a set of individuals, we write $x \prec J$ ($x \preceq J$) if $y \not \preceq x$ (resp., $y \not \prec x$) for all $y \in J$.
In words, $x \prec J$ if $x$ has no ancestor in $J$.

Let $(\Omega_{\varnothing}, \A_{\varnothing},
\Prm_{\!\varnothing})$ be a probability space on which point
processes $\iZ = \sum_{k=1}^{\iN} \delta_{(\itau_k,\iX_k)}$ on
$\{1,\ldots,p\} \times \R_{\geq 0}$, $i = 1,\ldots,p$ are
defined where here and in the remainder of the paper, $\delta_x$
denotes the Dirac measure with a point at $x$ and $\R_{\geq 0}:=[0,\infty)$.
Notice that $\Prm_{\!\varnothing}(\iN = \infty) > 0$ is not excluded. For $i=1,\ldots,p$, the basic
probability space is defined to be the product space
\begin{equation*}
(\Omega,\A,\Prob^i) ~:=~ (\{1,\ldots,p\},
\mathfrak{P}(\{1,\ldots,p\}), \delta_i) \otimes \prod_{x \in \I}
(\Omega_{x}, \A_{x}, \Prm_{\!x}),
\end{equation*}
where $(\Omega_{x}, \A_{x}, \Prm_{\!x})$, $x \in \I$ are copies of
$(\Omega_{\varnothing}, \A_{\varnothing}, \Prm_{\!\varnothing})$
and $\mathfrak{P}(\{1,\ldots,p\})$ is the set of all
subsets of $\{1,\ldots, p\}$. In particular, each space
$(\Omega_{x}, \A_{x}, \Prm_{\!x})$ carries copies
$\jZ(x)\!=\!\sum_{k=1}^{N^{j}(x)} \! \delta_{(\jtau_k(x),\jX_k(x))}$ of
the point processes $\jZ$, $j=1,\ldots,p$. We slightly abuse
notation and interpret the $(\oneZ(x),\ldots,\,^p\!\!\Z(x))$, $x
\in \I$ as i.i.d.~processes on $(\Omega,\A,\Prob^i)$.

We now describe the evolution of the process. Interpreting
$\varnothing$ as the label of the ancestor and $\tau(\varnothing)$
and $S(\varnothing)$ as its type and birth time, respectively, we
put $\tau(\varnothing)=i$ (under $\Prob^i$; formally, $\tau(\varnothing)$ is the projection onto the first coordinate of $\Omega$)
and $S(\varnothing)=0$.
At time $n=1$, the ancestor produces offspring according to the
point process $\iZ(\varnothing)$.
The offspring is enumerated by $1, \ldots, \iN =
\iZ(\varnothing)(\{1,\ldots,p\} \times \R_{\geq 0})$.
(Notice that when $\iN = \infty$, there is no individual labeled $\infty$. We make the convention that an enumeration of
the form $1,2,\ldots,\infty$ means the enumeration $1,2,\ldots$.)
Type and birth time of individual $x$ are defined by $\tau(x) :=
\itau_x(\varnothing)$ and $S(x) := \iX_x(\varnothing)$,
respectively. $\G_1 = \{1,2,\ldots, \iN\}$ is the first generation
of the process. Further, an individual $x = x_1 \ldots x_n \in
\N^n$ of the $n$th generation with type $\tau(x)=j$ and birth time
$S(x)$ produces at time $n+1$ a random number $\jN(x)$ offspring.
The offspring are labeled $x1, \ldots, x \jN(x)$. For $y
\in \N$, $y \leq \jN(x)$, type and birth time of particle $xy$ are
given by $\jtau_y(x)$ and $S(x) + \jX_y(x)$, respectively. The
$(n+1)$st generation $\G_{n+1}$ is defined by
\begin{equation}    \label{eq:G_n+1}
\{xy: x \in \G_n \text{ and } y \in \N,\, y \leq \,\!^{\tau(x)}\!N(x)\}.
\end{equation}
We set $\G := \bigcup_{n \in \N_0} \G_n$.
The point process of types and positions of the $n$th generation individuals will be denoted by
\begin{equation}    \label{eq:Z_n}
\Z_n    ~:=~    \sum_{|x| = n}  \delta_{(\tau(x),S(x))}
\end{equation}
where here and in what follows summation over $|x|=n$ means summation over $x \in \G_n$.
The sequence $(\Z_n)_{n \geq 0}$ forms a \emph{multi-type branching random walk}.

We further assume the existence of a product-measurable, separable random characteristic
$\phi: \Omega \times \R \to [0,\infty)$ with $\phi(t)=0$ for all $t<0$.
For $t \in \R$, we write $\phi(t)$ for the random variable $\omega \mapsto \phi(\omega,t)$.
Notice that $\phi$ may depend on the types of all individuals in $\I$, in particular on the type of the ancestor.

To define the general branching process counted with
characteristic $\phi$, we need to introduce further notation. An
element $\omega \in \Omega$ is of the form $\omega = (i,(\omega_x)_{x \in \I})$.
For each $x \in \I$, let $\sigma_x:\Omega \to \Omega$, $\omega = (\omega_y)_{y \in \I} \mapsto \sigma_x \omega := (\tau(x),(\omega_{xy})_{y \in \I})$
be the shift operator. Whenever $\Psi$ is a function from
$(\Omega,\A)$ into another measurable space, we denote by
$[\Psi]_x$ the function $\omega \mapsto \Psi(\sigma_x \omega)$.
The general (multi-type) branching process counted with
characteristic $\phi$ is then defined as
\begin{equation}    \label{eq:BGP}
\Z^{\phi}(t)    ~:=~    \sum_{x \in \G} [\phi]_x(t-S(x)).
\end{equation}

\section{Main results}  \label{sec:main results}

It is known from \cite[Theorem 6.5]{Nerman:1979}, \cite[Theorem 7.2]{Jagers:1989} and \cite[Theorem 2.1]{Olofsson:2009}
that under appropriate assumptions which include
the existence of a Malthusian parameter $\alpha > 0$, $e^{-\alpha t} \Z^{\phi}(t)$ converges in probability to a limit
which is not degenerate at $0$.\!\footnote{
Notice that in \cite{Jagers:1989,Olofsson:2009} the more general situation of an abstract type space is considered.}
The main result of the paper at hand is Theorem \ref{Thm:Rates of convergence},
in which sufficient conditions are provided for $e^{-\alpha t} \Z^{\phi}(t)$ to converge to its limit in probability at a polynomial rate.
Additionally, the methods employed here allow us to give simple proofs of the convergence in probability and a.s.~convergence
of $e^{-\alpha t} \Z^{\phi}(t)$ and the a.s.~convergence of $\Z^{\phi}(t)/\Z^{\psi}(t)$ which are stated as Theorems \ref{Thm:WLLN},
\ref{Thm:SLLN} and \ref{Thm:Ratio convergence}, respectively.
Theorem \ref{Thm:Ratio convergence} improves on an earlier result by Nerman \cite[Theorem 6.7]{Nerman:1979}.

\subsection{Preliminaries and assumptions}  \label{subsec:Assumptions}

For $i,j \in \{1,\ldots,p\}$, let $\imu(\dj \times \dt)$ denote
the intensity measure of the point process $\iZ(\dj \times \dt)$
on $\R_{\geq 0}$, that is, $\imu(\{j\} \times B) =
\E^i [\iZ(\{j\} \times B)]$ for Borel sets $B \subset
\R_{\geq 0}$. By $\im_{j}$ we denote the Laplace transform
of $\imu(\{j\} \times \cdot)$:
\begin{equation}    \label{eq:m^sigma_tau}
\im_{j}(\theta) ~:=~    \int_{[0,\infty)} e^{-\theta t} \imu(\{j\} \times \dt)
~=~ \E^{i} \bigg[\sum_{|x|=1: \tau(x)=j} e^{-\theta S(x)}\bigg],    \quad   \theta \geq 0.
\end{equation}
Let $\bfM(\theta)$ denote the matrix with entries $\bfM(\theta)_{ij} = \im_{j}(\theta)$, $i,j=1,\ldots, p$.
Each $\bfM(\theta)$ is a nonnegative matrix that may have entries $+\infty$.
Throughout the paper, we make the following assumptions:
\begin{itemize}
    \item[(A1)]
        For all $h > 0$ and all $h_1,\ldots,h_p \in [0,h)$
        $\imu(\{j\} \times (h_j - h_i + h\integers)^{\comp}) > 0$ for some $i,j \in \{1,\ldots,p\}$
        where $A^\comp$ denotes the complement of $A$ and $\integers$ is the set of integers.
        Further, $\bfM(0)$ is irreducible, \textit{i.e.}, there exists some $n \in \N$ such that $\bfM(0)^n$
        has positive (possibly infinite) entries only.
    \item[(A2)]
        Either $\bfM(0)$ has an infinite entry or
        $\bfM(0)$ has finite entries only and Perron-Frobenius eigenvalue $\rho > 1$.
    \item[(A3)]
        There exists some $\alpha > 0$ such that $\bfM(\alpha)$ has finite entries only
        and 1 is the Perron-Frobenius eigenvalue of $\bfM(\alpha)$
        with left and right eigenvectors $\bu = (u_1,\ldots,u_p)$ and $\bv = (v_1,\ldots,v_p)$.
    \item[(A4)]
        $-\im_{j}'(\alpha) := \int_{[0,\infty)} t e^{-\alpha t} \imu(\{j\}\times\dt) \in (0,\infty)$ for $i,j=1,\ldots,p$.
\end{itemize}
The following assumption will only be in force when explicitly stated:
\begin{itemize}
    \item[(A5)]
            $\E \! \Big[\big(\int_{[0,\infty)} e^{-\alpha t} \iZ(\dj \times \dt)\big)  \log^+ \!\! \big(\int_{[0,\infty)}
            e^{-\alpha t} \iZ(\dj \times \dt)\big)\Big] \! < \infty$
            for $i=1,\ldots,p$.
\end{itemize}
Although the nonlattice assumption which forms a part of
(A1) may appear restrictive at first sight, it is not, for it
holds whenever one of the $\imu(\{j\} \times \cdot)$ has a
nontrivial continuous component.
With little effort, the results of the paper can be extended to
the lattice case. While (A2) entails supercriticality, (A3)
demands the existence of a Malthusian parameter. By convention, we
assume that $\bu$ and $\bv$ are such that $\bu \cdot \bv^{\transp}
= \mathbf{1} \cdot \bv^{\transp} = 1$  (\,$^{\transp}$ for
transpose) or, more explicitly,
\begin{equation}    \label{eq:uv}
\sum_{i=1}^p u_{i} v_{i}    ~=~ \sum_{i=1}^p v_{i} ~=~  1.
\end{equation}
Finally we note that (A4) is a drift condition, whereas (A5) is the classical $(Z \log Z)$-condition for the multi-type branching random walk.

For $i=1,\ldots,p$, define
\begin{equation}    \label{eq:multi-type martingale}
\iWn    ~:=~    \sum_{|x|=n} \frac{v_{\tau(x)}}{v_i} e^{-\alpha S(x)},  \quad n \in \N_0.
\end{equation}
It can be checked that $(\iWn)_{n \geq 0}$ is a nonnegative mean-one martingale under $\Prob^{i}$
with respect to the canonical filtration.
Hence, it converges $\Prob^{i}$-a.s.~to some finite random variable $\iW \geq 0$.
If (A5) holds, then $\Prob^i(\iW>0)>0$, see Section \ref{subsec:basic martingale convergence} for details.

\subsection{Convergence in probability and $\L^1$}

Our starting point is the following weak law of large numbers.

\begin{Thm} \label{Thm:WLLN}
Assume that $t \mapsto e^{-\alpha t} \E^i [\phi(t)]$ is directly Riemann integrable\footnote{
See p.~232 in \cite{Resnick:2002} for the definition of direct Riemann integrability.} and that, for each $i=1,\ldots,p$,
\begin{equation}    \label{eq:A1 WLLN}
\E^i \Big[ \sup_{0 \leq s \leq t} \phi(s) \Big] ~<~ \infty  \quad   \text{for all } t \geq 0.
\end{equation}
Then, for each $i=1,\ldots,p$,
\begin{equation}    \label{eq:WLLN}
e^{-\alpha t} \Z^{\phi}(t)
~\to~   \iW \frac{v_i \sum_{j=1}^p u_j \int_0^{\infty}  e^{-\alpha s} \E^j[\phi(s)] \, \ds}{\sum_{j,k=1}^p u_j  v_k (-\jm_k)'(\alpha)}
\quad   \text{in } \Prob^i \text{-probability as } t \to \infty.
\end{equation}
If (A5) is valid, then the above convergence also holds in $\L^1(\Prob^i)$.
\end{Thm}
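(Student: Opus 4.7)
The plan is to follow the embedding approach announced in the abstract and reduce everything to the single-type weak law of large numbers of Nerman, by restricting attention to individuals whose type coincides with that of the ancestor. Fix $i \in \{1,\ldots,p\}$ and work under $\Prob^i$. For $y \in \G$, let $\pi_i(y)$ denote the most recent type-$i$ individual $x \preceq y$, allowing $x = y$ when $\tau(y) = i$; in particular $\pi_i(\varnothing) = \varnothing$. The set $\iG := \{x \in \G: \tau(x) = i\}$ then forms a single-type general branching process once equipped with the child relation ``$y$ is a child of $x$ iff $x \prec y$ and no strictly intermediate individual is of type $i$''. A short calculation using $\bu \bfM(\alpha) = \bu$ shows that the first-generation Laplace transform $\tilde m_i$ of this embedded process satisfies $\tilde m_i(\alpha) = 1$, so $\alpha$ is its Malthusian parameter, and the associated Nerman martingale is $\iWn$ sampled along the random stopping lines of type-$i$ descendants, hence converges a.s.\ to $\iW$.

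Introduce the random characteristic of the embedded process
\[
\iphi(t) ~:=~ \sum_{y \in \G:\, \pi_i(y) = \varnothing} [\phi]_y(t - S(y)).
\]
Since every $y \in \G$ has a unique $\pi_i(y) \in \iG$, partitioning the sum in \eqref{eq:BGP} according to $\pi_i(y)$ yields $\Z^{\phi}(t) = \sum_{x \in \iG} [\iphi]_x(t - S(x))$, expressing $\Z^{\phi}$ exactly as the embedded single-type general branching process counted with the characteristic $\iphi$. The next step is to verify the hypotheses of the single-type weak law for $\iphi$: direct Riemann integrability of $t \mapsto e^{-\alpha t} \E^i[\iphi(t)]$, and $\E^i[\sup_{s \leq t} \iphi(s)] < \infty$ for all $t \geq 0$. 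Fubini and conditioning on the type and birth time of the generic $y$ contributing to $\iphi$ give
\[
\E^i[\iphi(t)] ~=~ \sum_{j=1}^p \int_0^t \E^j[\phi(t-s)] \, U_{ij}(\ds),
\]
where $U_{ij}(\ds) := \E^i\big[\sum_{y \in \G:\, \pi_i(y) = \varnothing,\, \tau(y) = j} \1_{\{S(y) \in \ds\}}\big]$ is the type-$i$-avoiding Markov renewal measure from $i$ to $j$. Direct Riemann integrability of $e^{-\alpha t}\E^i[\iphi(t)]$ then follows from the hypothesis on each $\E^j[\phi(t)]$ combined with $\int_0^\infty e^{-\alpha s} U_{ij}(\ds) < \infty$, itself a consequence of (A1)--(A4); the supremum estimate follows analogously from \eqref{eq:A1 WLLN} after the pointwise domination $\sup_{s \leq t} \iphi(s) \leq \sum_{y:\, \pi_i(y) = \varnothing} \sup_{s \leq t} [\phi]_y(s)$.

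Nerman's single-type weak law then gives
\[
e^{-\alpha t} \Z^{\phi}(t) ~\to~ \iW \cdot \frac{\int_0^\infty e^{-\alpha s} \E^i[\iphi(s)] \, \ds}{-\tilde m_i'(\alpha)}
\quad \text{in } \Prob^i\text{-probability,}
\]
and it remains to match this with \eqref{eq:WLLN}. Using $\bu \bfM(\alpha) = \bu$ together with the normalization \eqref{eq:uv}, the type-$i$-avoiding Green function evaluates to $\int_0^\infty e^{-\alpha s} U_{ij}(\ds) = u_j/u_i$; a parallel calculation, starting from the first-return representation of $\tilde m_i$, gives
\[
-\tilde m_i'(\alpha) ~=~ \frac{1}{u_i v_i} \sum_{j,k=1}^p u_j v_k (-\jm_k'(\alpha)).
\]
Substituting these two identities produces \eqref{eq:WLLN}. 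Under (A5) the martingale $\iWn$ is uniformly integrable, so the $\L^1$-version of Nerman's theorem gives the $\L^1(\Prob^i)$ convergence by the same route. I expect the main obstacle to be the Markov-renewal expansion of $\E^i[\iphi(\cdot)]$ together with the exponential integrability of $U_{ij}$: this is where the full force of (A1)--(A4) enters, guaranteeing simultaneously the Perron--Frobenius structure of $\bfM(\alpha)$, the finiteness of the avoiding Green function, and the differentiability of $\tilde m_i$ at $\alpha$.
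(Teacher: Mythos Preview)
Your proposal is correct and follows essentially the same route as the paper: the set $\{y:\pi_i(y)=\varnothing\}$ is exactly $\{y\prec \J^i\}$, your characteristic $\iphi$ coincides with the paper's $\phi_{\J}$, and your avoiding Green measure $U_{ij}$ is the paper's Markov renewal object $\E^i[\sum_{k<\sigma^i}\1_{\{M_k=j\}}\delta_{S_k}(\cdot)]$ up to the $v$-tilt, so that your identities $\int e^{-\alpha s}U_{ij}(\ds)=u_j/u_i$ and $-\tilde m_i'(\alpha)=(u_iv_i)^{-1}\sum_{j,k}u_jv_k(-\jm_k)'(\alpha)$ are precisely Lemma~\ref{Lem:M}(b) and Proposition~\ref{Prop:J inherits1}(c). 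The only points you gloss over that the paper treats explicitly are (i) the identification of the embedded martingale limit with $\iW$ (done in the paper via \cite[Theorem 6.1]{Biggins+Kyprianou:2004}), and (ii) that for the $\L^1$ statement Nerman's Corollary~3.3 needs the $Z\log Z$ condition $\E[V_1\log^+V_1]<\infty$ for the \emph{embedded} reproduction, which has to be deduced from (A5) rather than from uniform integrability of $\iWn$ alone.
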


This result has been derived before by Nerman \cite{Nerman:1979}, Jagers
\cite{Jagers:1989} and Olofsson \cite{Olofsson:2009} (the last two
in the more general situation of an abstract type space). We
include a new proof since the methods employed in the proof of our
main result, the rate of convergence in \eqref{eq:WLLN}, lead to a short and simple derivation of \eqref{eq:WLLN}.

\subsection{Almost sure convergence}

\begin{Con} \label{Con:SLLN mu}
For some $\varepsilon > 0$ and $g(x) := x \log^{1+\varepsilon}(1+x)$, $x \geq 0$
\begin{equation}    \label{eq:SLLN mu}
\int g(t) e^{-\alpha t} \imu(\{j\} \times \dt) ~=~  \E^i\bigg[\sum_{|x|=1: \tau(x)=j} e^{-\alpha S(x)} g(S(x)) \bigg]   ~<~ \infty
\end{equation}
for $i,j=1,\ldots,p$.
\end{Con}

\begin{Con} \label{Con:SLLN phi}
For some $\varepsilon > 0$ and $h(x) := x\log^{1+\varepsilon}(1+x)$, $x \geq 0$
\begin{equation}    \label{eq:SLLN phi}
\sup_{t \geq 0}  \, (h(t) \vee 1) e^{-\alpha t} \phi(t)
\end{equation}
has finite expectation with respect to $\Prob^i$ for $i=1,\ldots,p$.
\end{Con}

\begin{Thm} \label{Thm:SLLN}
Assume that Conditions \ref{Con:SLLN mu} and \ref{Con:SLLN phi} are satisfied and that
$\phi$ has paths in the Skorokhod space $D := D(\R)$ of right-continuous functions with finite left limits.
Then, for each $i=1,\ldots,p$,
\begin{equation}    \label{eq:WLLN}
e^{-\alpha t} \Z^{\phi}(t)
~\to~   \iW \frac{v_i \sum_{j=1}^p u_j \int_0^{\infty}  e^{-\alpha s} \E^j[\phi(s)] \, \ds}{\sum_{j,k=1}^p u_j  v_k (-\jm_k)'(\alpha)}
\quad   \Prob^i \text{-a.s.~as } t \to \infty.
\end{equation}
\end{Thm}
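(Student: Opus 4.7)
The plan is to reduce the multi-type SLLN to Nerman's classical single-type SLLN applied to the embedded process $\hat{\G}$ of type-$i$ individuals (under $\Prob^i$). I will set $\iG := \{x \in \G : \tau(x) = i\}$ and declare every $y \in \iG \setminus \{\varnothing\}$ to be the (embedded) child of its most recent proper type-$i$ ancestor $\pi(y)$. Under (A1)--(A4), $\hat{\G}$ is a single-type general branching process whose Malthusian parameter is again $\alpha$: the eigenvalue equation $\mathbf{M}(\alpha)\mathbf{v}^{\transp} = \mathbf{v}^{\transp}$ implies, via a standard Markov-renewal computation along genealogical lines, that the Laplace transform of the embedded reproduction intensity at $\alpha$ equals $1$. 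Moreover $\hat{W}_n := \sum_{x \in \hat{\G}_n} e^{-\alpha S(x)}$ is Nerman's single-type martingale for this embedded tree, and I plan to invoke from Section~\ref{subsec:basic martingale convergence} that its a.s.\ limit equals $\iW$.

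Grouping every $y \in \G$ according to its most recent type-$i$ ancestor $\pi(y) \in \iG$ and applying the branching property at each type-$i$ vertex yields
\begin{equation*}
\Z^\phi(t) = \sum_{x \in \iG} [\bar\phi]_x(t - S(x)),
\qquad
\bar\phi(t) := \sum_{y \in \G :\, \pi(y) = \varnothing} [\phi]_y(t - S(y)),
\end{equation*}
so that $\Z^\phi(t)$ is the embedded process $\hat{\G}$ counted with the lifted random characteristic $\bar\phi$. It will then suffice to verify Nerman's single-type SLLN hypotheses for $\bar\phi$ and for the embedded reproduction measure, and to identify the resulting limit.

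For the embedded reproduction $x\log^{1+\varepsilon}(1+x)$ moment, I will telescope $S(y) = \sum_{k=0}^{|y|-1}(S(y|_{k+1}) - S(y|_k))$ along the type-$i$-free ancestral segment terminating at an embedded child $y$, use the elementary inequality $g(a+b) \le C_\varepsilon(g(a)+g(b))$ for $g(x) = x\log^{1+\varepsilon}(1+x)$, and sum via the multi-type many-to-one formula driven by Condition~\ref{Con:SLLN mu}. For the supremum moment on $\bar\phi$, I will employ the multiplicative splitting $e^{-\alpha t}(h(t) \vee 1) \le C\, e^{-\alpha S(y)}(h(S(y))\vee 1) \cdot e^{-\alpha(t - S(y))}(h(t-S(y))\vee 1)$, push the supremum under the defining sum for $\bar\phi$, and sum over $\{y : \pi(y) = \varnothing\}$ via a Campbell-type formula combined with Condition~\ref{Con:SLLN phi}. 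The Skorokhod path property for $\bar\phi$ is inherited from that of $\phi$, since $\phi$ vanishes on $(-\infty,0)$ and only finitely many birth times fall in any bounded interval almost surely.

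Finally, Nerman's single-type SLLN will give
\begin{equation*}
e^{-\alpha t}\Z^\phi(t) \;\to\; \iW \cdot \frac{\int_0^\infty e^{-\alpha s}\E^i[\bar\phi(s)]\,\ds}{\hat\beta}
\qquad \Prob^i\text{-a.s.},
\end{equation*}
where $\hat\beta$ is the Malthusian mean age of the embedded reproduction measure. A final Markov-renewal computation in terms of the Perron--Frobenius eigenvectors $\mathbf{u}$ and $\mathbf{v}$ should then identify the numerator and denominator, up to a common factor, with $v_i \sum_j u_j \int_0^\infty e^{-\alpha s}\E^j[\phi(s)]\,\ds$ and $\sum_{j,k} u_j v_k (-\jm_k)'(\alpha)$, matching the claimed limit. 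I expect the main obstacle to be the supremum moment bound on $\bar\phi$: the defining sum runs over the random forest of non-type-$i$ descendants of the ancestor, and controlling it uniformly in $t$ and $\omega$ requires a careful interplay of the multi-type many-to-one formula with the multiplicative splitting of $h$.
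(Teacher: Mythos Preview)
Your approach is essentially the paper's: reduce to Nerman's single-type SLLN (Theorem~5.4 in \cite{Nerman:1981}) via the embedded type-$i$ process and the lifted characteristic $\bar\phi$, which is precisely the paper's $\phi_{\J}$ from \eqref{eq:phi_J}, with the constants identified through the same Markov-renewal occupation-time computation you outline.

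Two minor points deserve attention. First, your justification that $\bar\phi$ inherits $D$-valued paths assumes that only finitely many births fall in any bounded interval, but the model allows $\iN = \infty$ with positive probability; the paper instead argues (Lemma~\ref{Lem:phi_J inherits}(a)) that the finite partial sums of the defining series converge locally uniformly, which follows from the integrability in Condition~\ref{Con:SLLN phi}. Second, the identity $\hat W = \iW$ you plan to invoke is not contained in Section~\ref{subsec:basic martingale convergence}; it is Proposition~\ref{Prop:J inherits2}(a), whose proof uses (A5), an assumption \emph{not} made in Theorem~\ref{Thm:SLLN}. The paper sidesteps this by observing that the a.s.\ limit produced by Nerman's theorem must agree with the in-probability limit already computed in Theorem~\ref{Thm:WLLN}, whose hypotheses are implied by Conditions~\ref{Con:SLLN mu} and~\ref{Con:SLLN phi}; this identifies the limit as $\iW$ times the displayed constant without ever directly proving $\hat W = \iW$.
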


This result is Theorem \cite[Theorem 6.6]{Nerman:1979}.
We reprove it using a different method, and slightly stronger assumptions, than in \cite{Nerman:1979}.
However, for almost all applications, Conditions \ref{Con:SLLN mu} and \ref{Con:SLLN phi} will be sufficiently weak
and we think that our formulation of Theorem \ref{Thm:SLLN} constitutes a fair tradeoff
between trying to go as close as possible to the optimal conditions and trying to keep the arguments short.

\subsection{Ratio convergence}

Theorem \ref{Thm:Ratio convergence} below provides sufficient
conditions for the convergence of the ratio of two processes
$\Z^\phi$ and $\Z^\psi$. The theorem is interesting mainly when
the $(Z \log Z)$-condition (A5) fails. It is an extension of
Theorem 6.3 in \cite{Nerman:1981} to the multi-type case and
follows quite easily from the methods used here. A similar
result can be found in \cite[Theorem 6.7]{Nerman:1979}; however, there
convergence is shown under assumptions that are too restrictive
for the future applications we have in mind.

\begin{Con} \label{Con:Ratio convergence mu}
There is some $\theta < \alpha$ such that $\bfM(\theta)$ has finite entries only.
\end{Con}

\begin{Con} \label{Con:Ratio convergence phi}
$\phi$ is not identically $0$ with positive probability and has
paths in the Skorokhod space $D := D(\R)$ of right-continuous
functions with finite left limits and there exists a $\theta < \alpha$ such that, for $i=1,\ldots,p$,
\begin{equation*}
\E^i \Big[\sup_{t \geq 0} e^{-\theta t} \phi(t) \Big]   ~<~ \infty.
\end{equation*}
\end{Con}

\begin{Thm} \label{Thm:Ratio convergence}
Assume that Condition \ref{Con:Ratio convergence mu} holds and
that $\phi$ and $\psi$ are characteristics satisfying Condition
\ref{Con:Ratio convergence phi}. Then, on $S = \{|\G_n|>0 \text{
for all } n \in \N_0\}$, for $i=1,\ldots,p$,
\begin{equation}    \label{eq:ratio convergence}
\frac{\Z^{\phi}(t)}{\Z^{\psi}(t)}
~\to~   \frac{\sum_{j=1}^p u_j \int_0^{\infty}  e^{-\alpha s} \E^j[\phi(s)] \, \ds}{\sum_{j=1}^p u_j \int_0^{\infty}  e^{-\alpha s} \E^j[\psi(s)] \, \ds}
\quad   \Prob^i \text{-almost surely as } t \to \infty.
\end{equation}
\end{Thm}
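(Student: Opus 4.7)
The plan is to reduce the claim to the single-type ratio convergence theorem of Nerman \cite[Theorem 6.3]{Nerman:1981} by passing to the embedded single-type Crump--Mode--Jagers (CMJ) process formed from iterated type-$i$ stopping-lines, as announced in the introduction. Under $\Prob^i$, set $\iG := \{x \in \G : \tau(x) = i \text{ and } \tau(y) \neq i \text{ for all } \varnothing \prec y \prec x\}$ and iterate this construction inside each subtree rooted at a point of $\iG$ to obtain an embedded family tree $\G_e \subseteq \G$. By the branching property, $\G_e$ is a single-type CMJ process whose offspring point process at the root is $\sum_{x \in \iG} \delta_{S(x)}$. A direct Markov renewal calculation based on $\bfM(\alpha) \bv^\transp = \bv^\transp$ shows $\E^i\bigl[\sum_{x \in \iG} e^{-\alpha S(x)}\bigr] = 1$, so $\alpha$ is the Malthusian parameter of the embedded process, and Condition \ref{Con:Ratio convergence mu} transfers to a finite offspring Laplace transform at some $\theta_0 < \alpha$.

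To lift $\Z^\phi$ onto the embedded tree I introduce the excursion characteristic
\[
\tilde\phi(\omega, t) ~:=~ \phi(\omega, t) + \sum_{y \in \G_*(\omega)} [\phi]_y(\omega, t - S(y)),
\]
where $\G_* := \{y \in \G : y \neq \varnothing \text{ and } \tau(y') \neq i \text{ for all } \varnothing \prec y' \preceq y\}$ collects the non-type-$i$ excursion individuals between $\varnothing$ and $\iG$. A telescoping sum along iterated stopping-lines yields $\Z^\phi(t) = \sum_{x \in \G_e} [\tilde\phi]_x(t - S(x))$, and similarly for $\psi$. Consequently $\Z^\phi(t)/\Z^\psi(t)$ is literally a single-type ratio on $\G_e$, and the task reduces to verifying the hypotheses of Nerman's theorem for the embedded process together with $\tilde\phi$ and $\tilde\psi$.

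The main obstacle is the strong integrability condition $\E^i\bigl[\sup_{t \geq 0} e^{-\theta t} \tilde\phi(t)\bigr] < \infty$ for some $\theta < \alpha$. Using the pointwise estimate $\sup_t e^{-\theta t}[\phi]_y(t - S(y)) \leq e^{-\theta S(y)} [\sup_s e^{-\theta s} \phi(s)]_y$, conditioning on $\G_*$ and on the types of its elements, and expanding generation by generation, the expectation reduces to a Neumann series in $\widetilde{\bfM}(\theta)$---the principal submatrix of $\bfM(\theta)$ obtained by deleting row and column $i$---multiplied by $\max_j \E^j[\sup_s e^{-\theta s} \phi(s)]$. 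Because $\bfM(\alpha)$ is irreducible with Perron root $1$, Perron--Frobenius theory gives $\rho(\widetilde{\bfM}(\alpha)) < 1$, so the series converges for $\theta$ slightly below $\alpha$; Condition \ref{Con:Ratio convergence mu} ensures that $\bfM(\theta)$ has finite entries at such $\theta$, and Condition \ref{Con:Ratio convergence phi} gives finiteness of each $\E^j[\sup_s e^{-\theta s} \phi(s)]$.

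With the hypotheses in place, \cite[Theorem 6.3]{Nerman:1981} applied to $\G_e$ yields $\Z^\phi(t)/\Z^\psi(t) \to \int_0^\infty e^{-\alpha t} \E^i[\tilde\phi(t)]\, \dt \bigm/ \int_0^\infty e^{-\alpha t} \E^i[\tilde\psi(t)]\, \dt$, $\Prob^i$-a.s.\ on the embedded survival event, which coincides with $S$ modulo $\Prob^i$-null sets by irreducibility. A many-to-one calculation using the left eigenvector identity $\bu \bfM(\alpha) = \bu$ then identifies $\int_0^\infty e^{-\alpha t} \E^i[\tilde\phi(t)]\, \dt = u_i^{-1} \sum_{j=1}^p u_j \int_0^\infty e^{-\alpha s} \E^j[\phi(s)]\, \ds$, with the analogous identity for $\psi$; the common factor $u_i^{-1}$ cancels, producing the limit in \eqref{eq:ratio convergence}.
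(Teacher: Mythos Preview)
Your proposal is correct and follows the same overall route as the paper: embed the multi-type process into the single-type CMJ process obtained from the iterated type-$i$ stopping line $\J^i$, rewrite $\Z^\phi$ via the excursion characteristic (the paper's $\phi_{\J}$, your $\tilde\phi$), and invoke Nerman's Theorem~6.3. The identification of the limit via the occupation-time formula for the driving chain also matches.

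The one substantive difference is in how you verify the key integrability $\E^i[\sup_{t\ge 0} e^{-\theta t}\tilde\phi(t)]<\infty$ for some $\theta<\alpha$. The paper works through the auxiliary Markov random walk $((M_n,S_n))_{n\ge 0}$: it uses the exponential tail $\Prob^i(\sigma^i>n)\le C^2 e^{-\gamma n}$ from Lemma~\ref{Lem:M}(c) together with a Cauchy--Schwarz argument to control $\sum_{k<\sigma} e^{(\alpha-\beta)S_k}$ (Lemma~\ref{Lem:phi_J inherits}(d) and Proposition~\ref{Prop:J inherits2}(b)). Your argument is more direct: expand the expectation as a Neumann series in the principal submatrix $\widetilde{\bfM}(\theta)$ and use the Perron--Frobenius fact that $\rho(\widetilde{\bfM}(\alpha))<1$, plus continuity in $\theta$ (legitimate here thanks to Condition~\ref{Con:Ratio convergence mu}). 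Both arguments are valid; yours is self-contained and avoids the change-of-measure machinery, while the paper's approach fits into the Markov-renewal framework it reuses throughout. One small point worth making explicit in your write-up is that the first generation of $\G_*$ comes from row~$i$ of $\bfM(\theta)$ (not from $\widetilde{\bfM}(\theta)$), so the series is $\mathbf r_i\sum_{n\ge 0}\widetilde{\bfM}(\theta)^n\mathbf 1$ rather than a pure power series in $\widetilde{\bfM}(\theta)$; this does not affect convergence.
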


\subsection{Rate of convergence}    \label{subsec:rates of convergence}

Let $\delta > 0$. The following conditions are needed to formulate our main result, Theorem \ref{Thm:Rates of convergence}.
\begin{Con} \label{Con:Rate of convergence Z}
\begin{equation*}
\E^i \big[\iWone (\log^+ \iWone)^{1+\delta}\big]    ~<~ \infty
\quad   \text{for } i=1,\ldots,p.
\end{equation*}
\end{Con}

\begin{Con}     \label{Con:spread-out}
Assume that there is a finite sequence $i_0,\ldots,i_n \in \{1,\ldots,p\}$ such that the convolution
\begin{equation*}
\,^{\mathit{i}_0}\!\mu(\{i_1\} \times \cdot) * \ldots * \,^{\mathit{i}_{n-1}}\!\mu(\{i_n\} \times \cdot)
\end{equation*}
possesses a nontrivial component which is absolutely continuous with respect to the Lebesgue measure.
\end{Con}

\begin{Rem} \label{Rem:spread-out}
In the single-type case ($p=1$), Condition \ref{Con:spread-out}
says that the distribution $e^{-\alpha t} \onemu(\{1\} \times
\dt)$ is spread-out.\!\footnote{A finite measure $\mu$ on $\R$ is
called \emph{spread-out} if some convolution power $\mu^{*n}$ of
$\mu$ has a nontrivial component which is absolutely continuous
with respect to Lebesgue measure.}
\end{Rem}

\begin{Con} \label{Con:Rate of convergence Z^phi}
There exists an eventually increasing function\footnote{A function
$h: \R_{\geq 0} \to \R$ is called \emph{increasing} if $s
\leq t$ implies $h(s) \leq h(t)$ for all $s,t \geq 0$. It is
called \emph{eventually increasing} if for some $a \geq 0$, $h$ is
increasing on $[a,\infty)$. $h$ is called \emph{decreasing} or
\emph{eventually decreasing} if $-h$ is increasing or eventually
increasing, respectively.} $h: \R_{\geq 0} \to (0,\infty)$
that is regularly varying of index $1$ at $\infty$ with the
properties that (i) $t \mapsto t/h(t)$ is eventually decreasing,
(ii) $t \mapsto t^2/h(t)$ is eventually increasing, and (iii) $t
(\log t)^{2\delta} = o(h(t))$ as $t \to \infty$ such that, for
$i=1,\ldots,p$,
\begin{equation}    \label{eq:Rate of convergence Z^phi}
\sup_{t \geq 0} \E^i \big[ h\big(e^{-\alpha t} \Z^{\phi}(t)\big)\big]   ~<~ \infty.
\end{equation}
\end{Con}

For a particular $\phi$ sufficient conditions for \eqref{eq:Rate
of convergence Z^phi} to hold are given in the proof of Theorem
6.1. For general $\phi$ finding such sufficient conditions is a
problem on its own which does not seem simple, and we refrain from
investigating it here.

\begin{Rem} \label{Rem:Rate of convergence Z^phi}
$h(t)=t (\log t)^{2 \delta}\log (\log t)$ (for large $t$)
is a typical example of the function $h$ in Condition
\ref{Con:Rate of convergence Z^phi}.
\end{Rem}

\begin{Con}     \label{Con:Rate of convergence phi}
For $i=1,\ldots,p$, the mapping $t \mapsto e^{-\alpha t} \E^i
[\phi(t)]$ is bounded and Lebesgue integrable with
\begin{equation}    \label{eq:Rate of convergence phi}
\lim_{t \to \infty} t^{\delta} \int_t^{\infty} e^{-\alpha s} \E^i[\phi(s)] \,\ds = 0
\quad   \text{and}  \quad
\lim_{t \to \infty} t^{\delta} \sup_{s \geq t} e^{-\alpha s} \E^i [\phi(s)] = 0.
\end{equation}
\end{Con}

\begin{Thm} \label{Thm:Rates of convergence}
Assume that, for some $\delta > 0$, Conditions \ref{Con:Rate of
convergence Z}, \ref{Con:spread-out}, \ref{Con:Rate of convergence
Z^phi} and \ref{Con:Rate of convergence phi} are valid and that,
for each $i=1,\ldots,p$,
\begin{equation}    \label{eq:ES_1^(1+delta)<infty}
\E^i \bigg[\sum_{|x|=1} e^{-\alpha S(x)} S(x)^{1+\delta}\bigg] ~<~ \infty.
\end{equation}
Then, for each $i=1,\ldots,p$, in $\Prob^i$-probability,
\begin{equation}    \label{eq:Rate in WLLN}
\lim_{t \to \infty} t^{\delta} \bigg|e^{-\alpha t} \Z^{\phi}(t)
- \iW \frac{v_i \sum_{j=1}^p u_j \int_0^{\infty}  e^{-\alpha s} \E^j[\phi(s)] \, \ds}{\sum_{j,k=1}^p u_j  v_k (-\jm_k)'(\alpha)}\bigg|
~=~ 0.
\end{equation}
\end{Thm}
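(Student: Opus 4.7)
The plan is to reduce Theorem~\ref{Thm:Rates of convergence} to the corresponding rate-of-convergence statement for single-type general branching processes, applied to the embedded single-type process of all descendants whose type coincides with that of the ancestor. The single-type case is (per the abstract) treated separately in the paper and in turn rests on the exact polynomial rate at which Nerman's continuous-time martingale converges to its limit.

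\textbf{Construction of the embedded process.} Work under $\Prob^i$, so that the ancestor has type $i$. Inductively define stopping lines $\J^i_0 := \{\varnothing\}$ and
$$
\J^i_{n+1} ~:=~ \{x \in \G : \exists\, y \in \J^i_n \text{ with } y \prec x,\ \tau(x) = i,\ \tau(z) \neq i \text{ for every } y \prec z \prec x\}.
$$
Then $\tilde{\G}^i := \bigcup_{n \geq 0} \J^i_n$, equipped with the positions $(S(x))_{x \in \tilde{\G}^i}$, forms a single-type general branching process of type $i$. Its Malthusian parameter is $\alpha$ and its Nerman martingale coincides (after normalisation) with $(\iWn)_{n \geq 0}$; Condition~\ref{Con:spread-out} yields the spread-out property of its reproduction law (cf.\ Remark~\ref{Rem:spread-out}). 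Define the embedded random characteristic
$$
\tilde{\phi}(t) ~:=~ \sum_{x \in \G:\ \tau(z) \neq i \text{ for all } \varnothing \prec z \preceq x} [\phi]_x(t - S(x)),
$$
which aggregates the $\phi$-contributions from $\varnothing$ down to, but not including, the stopping line $\J^i_1$. The partition of $\G$ into the subtrees hanging at the vertices of $\tilde{\G}^i$ yields
$$
\Z^\phi(t) ~=~ \sum_{y \in \tilde{\G}^i} [\tilde{\phi}]_y(t - S(y)),
$$
so the multi-type process counted with $\phi$ equals the embedded single-type process counted with $\tilde{\phi}$.

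\textbf{Application of the single-type rate theorem.} Apply the single-type rate theorem to the pair $(\tilde{\G}^i, \tilde{\phi})$. It delivers $t^\delta (e^{-\alpha t}\Z^\phi(t) - \iW\cdot \tilde{C}) \to 0$ in $\Prob^i$-probability for an explicit constant $\tilde{C}$, and a direct computation using~\eqref{eq:uv} together with the many-to-one identity identifies $\tilde{C}$ with the multi-type constant appearing in~\eqref{eq:Rate in WLLN}. Most of the preparatory work is to verify the single-type hypotheses: the log-moment Condition~\ref{Con:Rate of convergence Z} is already phrased in terms of $\iWone$; the required $(1+\delta)$-moment on the embedded birth positions follows from~\eqref{eq:ES_1^(1+delta)<infty} together with (A3)--(A4) by a Wald-type identity summing along the finite chain from $\varnothing$ to $\J^i_1$; and the uniform bound $\sup_{t \geq 0}\E^i[h(e^{-\alpha t}\Z^\phi(t))] < \infty$ for the embedded process is exactly Condition~\ref{Con:Rate of convergence Z^phi}.

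\textbf{Main obstacle.} The hardest step is transferring the tail estimates of Condition~\ref{Con:Rate of convergence phi} from $\phi$ to $\tilde{\phi}$: the embedded characteristic aggregates $\phi$-contributions over a random, possibly long chain of non-type-$i$ ancestors, so one must establish
$$
\lim_{t \to \infty} t^{\delta} \int_t^{\infty} e^{-\alpha s}\E^i[\tilde{\phi}(s)]\,\ds ~=~ 0 \quad \text{and} \quad \lim_{t \to \infty} t^{\delta} \sup_{s \geq t} e^{-\alpha s}\E^i[\tilde{\phi}(s)] ~=~ 0.
$$
These bounds are obtained by a generation-by-generation many-to-one computation along the pre-$\J^i_1$ chain, using~\eqref{eq:ES_1^(1+delta)<infty} to control the length of that chain, together with a convolution-type argument to transfer the $t^{-\delta}$ decay from the ancestor's contribution to the accumulated one. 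Once these tail bounds are in place, the single-type rate theorem applies directly and yields~\eqref{eq:Rate in WLLN}.
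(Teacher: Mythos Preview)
Your overall strategy coincides with the paper's: reduce to the embedded single-type process along the stopping lines $\J^i_n$, rewrite $\Z^{\phi}$ as the single-type process counted with the aggregated characteristic $\phi_{\J}$ (your $\tilde\phi$), and invoke the single-type rate theorem. The key identity and the identification of the limiting constant are exactly as in the paper.

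There are, however, two places where you misidentify the mechanism. First, Condition~\ref{Con:Rate of convergence Z} is indeed stated in terms of $\iWone$, but what the embedded single-type theorem needs is $\E\big[V_1(\log^+V_1)^{1+\delta}\big]<\infty$ with $V_1=\sum_{x\in\J}e^{-\alpha S(x)}$, and $V_1\neq\iWone$. The paper obtains this via a genuine detour: a multi-type $\log$-moment result for the limit $\iW$ (Proposition~\ref{Prop:martingale moments}), the identification $V=\iW$, and then the single-type converse implication (Proposition~\ref{Prop:J inherits2}(e)). This step is not automatic. Second, when transferring Condition~\ref{Con:Rate of convergence phi} to $\tilde\phi$, the length of the pre-$\J^i_1$ chain is not controlled by~\eqref{eq:ES_1^(1+delta)<infty}; it is the hitting time $\sigma^i$ of a finite irreducible Markov chain and hence has exponential moments for free (Lemma~\ref{Lem:M}(c)). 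The moment assumption~\eqref{eq:ES_1^(1+delta)<infty} is used instead to show that the \emph{spatial} displacement along that chain has a finite $(1+\delta)$-moment (Proposition~\ref{Prop:J inherits2}(d)). With these two corrections your outline matches the paper's proof.
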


The rest of the paper is organized as follows. The proofs of the
main results are given in Section \ref{sec:proofs}. They are based
on an embedding technique that is set out in Section
\ref{sec:embedded process}. In Section \ref{sec:martingale
convergence}, we derive auxiliary results concerning two
martingales that are important in our analysis, namely, the
additive martingale in the multi-type branching random walk and
Nerman's martingale in continuous time. For the former, we prove
$\log$-type moment results, for the latter, we derive the exact
polynomial rate of convergence.

\section{The embedded single-type process}      \label{sec:embedded process}

The basic idea in this paper is to derive the results in the
multi-type case from the corresponding single-type ones by
considering the embedded process of type-$i$ individuals.
In this section we prove some auxiliary results which are needed
to construct the latter process.

\subsection{Change of measure}	\label{subsec:measure change}

For $i=1,\ldots,p$, we define the finite-dimensional distributions
of a sequence $((M_n,S_n))_{n \geq 0}$ under $\Prob^i$ on
$\{1,\ldots,p\} \times \R_{\geq 0}$ via the identity
\begin{align}
\E^i& [h ((M_0,S_0),\ldots,(M_n,S_n))]  \notag  \\
&=~ \E^i \bigg[\sum_{|x|=n} e^{-\alpha S(x)} \frac{v_{\tau(x)}}{v_i} h((i,0),(\tau(x|_1),S(x|_1)),\ldots,(\tau(x),S(x)))\bigg], \label{eq:M_n,S_n}
\end{align}
where $h: (\{1,\ldots,p\} \times \R_{\geq0})^{n+1} \to [0,\infty)$ is (Borel-) measurable.
The right-hand side of \eqref{eq:M_n,S_n} equals $1$ for $h \equiv 1$
because the sequence $(\iWn)_{n \geq 0}$ defined in \eqref{eq:multi-type martingale} is a mean-one martingale.
$\bfM(\alpha) \bv^{\transp} = \bv^{\transp}$ guarantees that \eqref{eq:M_n,S_n} defines a consistent family of finite-dimensional distributions.
One can further check using induction on $n$ that $((M_n,
S_n))_{n \geq 0}$ is a Markov random
walk\footnote{$((M_n,S_n))_{n \geq 0}$ is a \emph{Markov random
walk} or \emph{Markov additive process} on $\{1,\ldots,p\} \times
\R$ if $((M_n,S_{n+1}-S_n))_{n \geq 0}$ is a time-homogeneous
Markov chain on $\{1,\ldots,p\} \times \R$ for which the
transition probabilities depend only on the first coordinate,
\textit{cf.}~\cite{Pyke:1961}.} with initial distribution
$\Prob^i((M_0,S_0) = (i,0))=1$ and transition kernel
\begin{align}
\Prob^i&((M_{n+1},S_{n+1}-S_n) \in \{k\} \times B | M_n=j)  \notag  \\
&=~ \frac{v_k}{v_j} \int_B e^{-\alpha t} \jmu(\{k\} \times \dt)
~=~ \frac{v_k}{v_j} \E^j \!\bigg[\sum_{|x|=1} e^{-\alpha S(x)}
\1_{\{\tau(x)=k, S(x) \in B\}} \! \bigg]    \label{eq:MRW
transition}
\end{align}
for $j,k \in \{1,\ldots,p\}$ and $B \subseteq \R_{\geq0}$ Borel.
For later use, we list a few properties of $(M_n)_{n \geq 0}$.

\begin{Lemma}   \label{Lem:M}
Fix $i \in \{1,\ldots,p\}$ and let $\sigma^i := \inf\{n > 0: M_n = i\}$.
\begin{itemize}
    \item[(a)]
        Under $\Prob^i$, $(M_n)_{n \geq 0}$ is a Markov chain with probability of transition from $j$ to $k$ given by
        $\jm_k(\alpha) v_k/v_j$ and stationary distribution $\pi = (\pi_1,\ldots,\pi_p)$
        where $\pi_j = u_j v_j$, $j=1,\ldots,p$.
    \item[(b)]
        $\E^i[\#\{0 \leq k < \sigma^i: M_k = j\}] = \E^i[\sigma^i] u_j v_j$ and $\E^i[\sigma^i] = (u_i v_i)^{-1}$.
    \item[(c)]
        For some $\gamma > 0$, $\E^i [e^{\gamma \sigma^i}] < \infty$ for $i=1,\ldots,p$.
\end{itemize}
\end{Lemma}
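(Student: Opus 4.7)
The plan is to reduce everything to standard facts about finite-state Markov chains once I have (i) identified the transition matrix of $(M_n)$ and (ii) established its irreducibility. For part (a), I will set $B = \R_{\geq 0}$ in the Markov-additive transition kernel \eqref{eq:MRW transition} to read off the one-step law of $(M_n)$, namely $P_{jk} = (v_k/v_j)\jm_k(\alpha)$; since this depends only on $j$, the sequence $(M_n)$ is a genuine Markov chain on $\{1,\ldots,p\}$ under $\Prob^i$. To check that $\pi_j := u_j v_j$ is stationary I compute
\[
(\pi P)_k ~=~ \sum_j u_j v_j \cdot \frac{v_k}{v_j} \jm_k(\alpha) ~=~ v_k \sum_j u_j \bfM(\alpha)_{jk} ~=~ v_k u_k ~=~ \pi_k,
\]
using the left eigenvector relation $\bu \bfM(\alpha) = \bu$ from (A3); normalisation is \eqref{eq:uv}.

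For part (b) the decisive observation is that $(M_n)$ is irreducible. Indeed $\jm_k(\alpha) = \int e^{-\alpha t}\,\jmu(\{k\}\times \dt)$ is strictly positive iff $\jmu(\{k\}\times \R_{\geq 0}) = \bfM(0)_{jk}$ is strictly positive, so the zero/nonzero pattern of the transition matrix of $(M_n)$ coincides with that of $\bfM(0)$, which is irreducible by (A1). Being a finite-state irreducible chain, $(M_n)$ is positive recurrent, and I will invoke the classical fact that the occupation measure of one excursion, $\nu_j := \E^i[\#\{0 \leq k < \sigma^i : M_k = j\}]$, is an invariant measure with $\nu_i = 1$. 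By uniqueness of the invariant measure up to a multiplicative constant, $\nu_j = \pi_j/\pi_i = u_j v_j/(u_i v_i)$; summing over $j$ and using \eqref{eq:uv} yields $\E^i[\sigma^i] = 1/(u_i v_i)$, which combined with the previous identity gives $\nu_j = u_j v_j\,\E^i[\sigma^i]$, as claimed.

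For part (c) I again exploit finiteness of the state space together with irreducibility: there exist $m \in \N$ and $\varepsilon > 0$ such that, from \emph{every} starting state $j$, the chain visits $i$ within $m$ steps with probability at least $\varepsilon$. A routine iteration via the strong Markov property then gives $\Prob^i(\sigma^i > km) \leq (1-\varepsilon)^k$ for every $k \in \N$, a geometric tail which immediately yields $\E^i[e^{\gamma \sigma^i}] < \infty$ for any $\gamma < -m^{-1}\log(1-\varepsilon)$.

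I anticipate no genuine obstacle; the only point requiring a moment's care is the transfer of irreducibility from the hypothesis (A1) on $\bfM(0)$ to the transition matrix of $(M_n)$, which is built from $\bfM(\alpha)$. Once that is done, all three assertions are repackagings of textbook facts about finite irreducible Markov chains.
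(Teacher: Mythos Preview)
Your proposal is correct and follows essentially the same route as the paper: read off the transition matrix from \eqref{eq:MRW transition}, verify $\pi_j = u_j v_j$ via $\bu\bfM(\alpha)=\bu$, identify the excursion occupation measure with a multiple of $\pi$ and normalise using $\nu_i=1$, and for (c) obtain a geometric tail for $\sigma^i$ from uniform positivity of the $n$-step transitions. Your explicit transfer of irreducibility from $\bfM(0)$ to the transition matrix of $(M_n)$ (via the shared zero pattern with $\bfM(\alpha)$) is a small but welcome clarification that the paper leaves implicit.
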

\begin{proof}
The first statement in (a) follows from \eqref{eq:MRW transition},
the second from
\begin{equation*}
\sum_{j=1}^p \pi_j \frac{\jm_k(\alpha) v_k}{v_j}      ~=~
\sum_{j=1}^p u_j \jm_k(\alpha) v_k ~=~ (\bu \bfM(\alpha))_k v_k
~=~ u_k v_k ~=~ \pi_k
\end{equation*}
and the fact that $\pi$ is normed by convention, see
\eqref{eq:uv}. For the proof of (b), define $\tilde{\pi}_j :=
\E^i[\#\{0 \leq k < \sigma^{i}: M_k = j\}]$, $j=1,\ldots,p$,
and $\tilde{\pi} := (\tilde{\pi}_j)_{j=1,\ldots,p}$. It is known
that $\tilde{\pi}$ is a left eigenvector to the eigenvalue $1$ for
the transition matrix $(\jm_k(\alpha) v_k/v_j)_{j,k=1,\ldots,p}$.
Hence, $\tilde{\pi}=c\pi$ for some $c>0$. Further, $\sum_{j=1}^p
\tilde{\pi}_j = \E^i[\sigma^i]$, $\sum_{j=1}^p \pi_j = 1$
and $\tilde{\pi}_i=1$ imply $c=\E^i[\sigma^i] = \pi_i^{-1}$
and hence assertion (b). Finally, by (A1), there exists an $n \in
\N$ such that $\bfM(\alpha)^n$ has positive entries only. Let $d$
be the minimal entry of the matrix $\bfM(\alpha)^n$. Then
$\Prob^i(\sigma^i > kn) \leq (1-d)^k$ for all $k \in
\N_0$. From this, assertion (c) is easily deduced.
\end{proof}

\subsection{Optional lines} \label{subsec:optional lines}

We make use of particular \emph{optional lines} (see
\cite{Biggins+Kyprianou:2004, Jagers:1989} for a general
treatment).

Fix $i \in \{1,\ldots,p\}$ and let $\sigma^i$ be defined as in
Lemma \ref{Lem:M}, \textit{i.e.}, $\sigma^i := \inf\{k > 0: M_k = i\}$.
Associated with $\sigma^i$ is an optional line
$\J^i \subseteq \G$ defined by
\begin{equation*}
\J^i ~:=~ \{x \in \G\setminus \{\varnothing\}: \tau(x) = i,\, \tau(x|_j) \not = i \text{ for } 0 < j < |x| \}.
\end{equation*}
Further, let $\sigma^i_n$ be the $n$th consecutive application of $\sigma^i$,
\textit{i.e.}, $\sigma^i_0 := 0$ and $\sigma^i_n := \inf\{k > \sigma^i_{n-1}: M_k = i\}$.
The optional lines associated with the $\sigma^i_n$ are denoted by $\J^i_n$, \textit{i.e.},
$\J^i_0 := \{\varnothing\}$ and
\begin{equation*}
\J^i_n  ~:=~    \bigcup_{x \in \J^i_{n-1}} \{xy: y \in [\J^i]_x\}.
\end{equation*}
Notice that the $\J^i_n$ as defined here are optional lines in the
sense of \cite{Jagers:1989} and \emph{very simple lines} in the
sense of \cite[Section 6]{Biggins+Kyprianou:2004}. Jagers
\cite[Theorem 4.14]{Jagers:1989} established the strong Markov
property for branching processes along optional lines, a result
that is crucial for our arguments here.

One can check using \eqref{eq:M_n,S_n} that
\begin{equation}    \label{eq:sigma=n<->J}
\Prob^i[((M_0,S_0),\ldots,(M_n,S_n)) \! \in \! B, \sigma^i = n] =
\E^i \! \bigg[\sum_{|x|=n\,:\, x \in \J^{i}} e^{-\alpha
S(x)} \frac{v_{\tau(x)}}{v_i} \delta_{\bT \otimes \bS(x)}(B)\bigg]
\end{equation}
and
\begin{equation}    \label{eq:sigma>n<->J}
\Prob^i[((M_0,S_0),\ldots,(M_n,S_n)) \! \in \! B, \sigma^i > n] =
\E^i \! \bigg[\sum_{|x|=n\,:\, x \prec \J^{i}} e^{-\alpha
S(x)} \frac{v_{\tau(x)}}{v_i}\delta_{\bT \otimes \bS(x)}(B)\bigg]
\end{equation}
for $B \subseteq (\{1,\ldots,p\} \times \R_{\geq 0})^{n+1}$
Borel and $\bT \otimes \bS(x) := ((\tau(x|_k),S(x|_k)))_{0 \leq k
\leq |x|}$. Summation over $n \geq 0$ and a standard approximation
argument give
\begin{equation}    \label{eq:sigma<->J general}
\E^i [f(M_{\sigma^i},S_{\sigma^i})] =   \E^i \bigg[\sum_{x \in
\J^{i}} e^{-\alpha S(x)} \frac{v_{\tau(x)}}{v_i}
f(\tau(x),S(x))\bigg]
\end{equation}
and
\begin{equation}    \label{eq:sigma<->prec J general}
\E^i \bigg[\sum_{k=0}^{\sigma^i-1} f(M_{k},S_{k})\bigg]
= \E^i \bigg[\sum_{x \prec \J^{i}} e^{-\alpha S(x)}
\frac{v_{\tau(x)}}{v_i} f(\tau(x),S(x))\bigg]
\end{equation}
for every measurable function $f:\{1,\ldots,p\}\times \R_{\geq 0} \to \R_{\geq 0}$.

For ease of notation, in the subsequent proofs, we shall fix $i=1$
(the type of the ancestor). This constitutes no loss of
generality. We shall write $\Prob$ for $\Prob^1$, $\E$ for $\E^1$,
$\sigma_n$ for $\sigma^1_n$, $\J$ for $\J^1$, $\J_n$ for $\J^1_n$,
\textit{etc.}

By $\Z_{\J_n}$, we denote the point process $\sum_{x \in \J_n} \delta_{S(x)}$,
by $\mu_n$ its associated intensity measure, and by $m_n$ the Laplace transform of $\mu_n$.
We write $\Z_{\J}$, $\mu$ and $m$ when $\J = \J_1$.
Further, for $n \in \N_0$, we define
\begin{equation}    \label{eq:W_n}
V_n ~:=~    \int_{[0,\infty)} e^{-\alpha t} \Z_{\J_n}(\dt)
~=~ \sum_{x \in \J_n} e^{-\alpha S(x)}.
\end{equation}
$(V_n)_{n \geq 0}$ is a nonnegative martingale w.r.t.~the
canonical filtration and converges a.s.~to a limit variable $V
\geq 0$. In the following proposition, we establish that
$(\Z_{\J_n})_{n \in \N_0}$ fulfills the standing assumptions given
in p.~366 of \cite{Nerman:1981} which correspond to the
assumptions (A1)--(A4) in the case $p=1$ here.

\begin{Prop}    \label{Prop:J inherits1}
Assume that (A1)--(A4) hold. Then:
\begin{itemize}
    \item[(a)]  $\mu$ is not concentrated on any lattice $h\integers$ for $h > 0$.
    \item[(b)]  $m(0)>1$ and $m(\alpha)=1$.
    \item[(c)]  $-m'(\alpha) := \int_{[0,\infty)} u e^{-\alpha u} \mu(\du) =  (u_1v_1)^{-1} \sum_{i,j=1}^p u_i  v_j (-\im_j)'(\alpha) < \infty$.
\end{itemize}
\end{Prop}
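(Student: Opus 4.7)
The overall plan is to use the change-of-measure identity \eqref{eq:sigma<->J general} to translate statements about $\mu$ into statements about the first-return time $\sigma$ of the Markov random walk $(M_n,S_n)$ under $\Prob = \Prob^1$. Since every $x \in \J$ has $\tau(x) = 1$, the factor $v_{\tau(x)}/v_1$ equals $1$, so \eqref{eq:sigma<->J general} simplifies to
\begin{equation*}
\E[f(M_\sigma, S_\sigma)] = \E\bigg[\sum_{x \in \J} e^{-\alpha S(x)} f(1, S(x))\bigg]
\end{equation*}
for measurable $f \geq 0$. The three parts of the proposition then follow from judicious choices of $f$.

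For (b), $f \equiv 1$ yields $m(\alpha) = 1$, since $\sigma < \infty$ $\Prob$-a.s.\ by Lemma \ref{Lem:M}(c). For (c), $f(m,s) = s$ gives $-m'(\alpha) = \E[S_\sigma]$; I would then split $S_\sigma = \sum_{n=0}^{\sigma-1}(S_{n+1}-S_n)$, read off the conditional increment mean $\E[S_{n+1}-S_n \mid M_n = j] = \sum_{k=1}^p (v_k/v_j)(-\jm_k'(\alpha))$ from the kernel \eqref{eq:MRW transition}, and use Lemma \ref{Lem:M}(b), which gives $(u_1 v_1)^{-1} u_j v_j$ for the expected occupation time at state $j$ before $\sigma$; a direct computation and relabeling yields the stated identity, and finiteness is immediate from (A4). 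Returning to $m(0)>1$ in (b), the choice $f(m,s) = e^{\alpha s}$ produces $m(0) = \E[e^{\alpha S_\sigma}] \geq 1$, and strict inequality follows because (c) and (A4) give $\E[S_\sigma] > 0$, forcing $\Prob(S_\sigma > 0) > 0$.

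Part (a) is the main obstacle. I would argue by contradiction: suppose $\mu$ is concentrated on $h\integers$ for some $h > 0$. Taking $f(m,s) = \1_{\{s \in B\}}$ in the identity shows that $\Prob(S_\sigma \in B) = \int_B e^{-\alpha s} \mu(ds)$ is a probability measure (by (b)) supported on $h\integers$, whence $S_\sigma \in h\integers$ $\Prob$-a.s. The plan is to construct $h_1 = 0, h_2, \ldots, h_p \in [0, h)$ contradicting the nonlattice part of (A1). For each $j$, Lemma \ref{Lem:M}(b) gives that the expected number of visits of $(M_n)$ to $j$ before $\sigma$ equals $(u_1 v_1)^{-1} u_j v_j > 0$, so $\Prob(\tau_j < \sigma) > 0$, where $\tau_j := \inf\{n \geq 0 : M_n = j\}$. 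On $\{\tau_j < \sigma\}$ one has $S_\sigma - S_{\tau_j} \equiv -S_{\tau_j} \pmod h$, while the conditional law of $S_\sigma - S_{\tau_j}$ given $M_{\tau_j} = j$ depends only on $j$ by the strong Markov property. This forces $S_{\tau_j} \bmod h$ to be a deterministic constant $h_j \in [0,h)$ on $\{\tau_j < \sigma\}$; the same reasoning applied at every subsequent visit yields $S_n \equiv h_{M_n} \pmod h$ $\Prob$-a.s.\ for all $n \in [0,\sigma]$. Consequently, on every transition $(j,k)$ occurring with positive probability, $S_{n+1}-S_n \equiv h_k - h_j \pmod h$, and since the conditional law of $S_{n+1}-S_n$ given $(M_n,M_{n+1})=(j,k)$ is mutually absolutely continuous with $\jmu(\{k\} \times \cdot)$ (by the factor $e^{-\alpha t}$ in \eqref{eq:MRW transition}), we deduce $\jmu(\{k\} \times (h_k - h_j + h\integers)^\comp) = 0$ for all $j,k$ (trivially so when $\jm_k(\alpha)=0$), contradicting (A1).
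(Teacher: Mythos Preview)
Your proof is correct. Parts (b) and (c) mirror the paper's argument via \eqref{eq:sigma<->J general}: the paper gets $m(\alpha)=1$ from $\Prob(\sigma<\infty)=1$, then derives $m(0)>1$ from strict monotonicity of $\theta\mapsto m(\theta)$ rather than from $\E[e^{\alpha S_\sigma}]>1$ as you do, and computes $\E[S_\sigma]$ by the same occupation-time decomposition (phrased there as $\E[S_\sigma]=\E[\sigma]\sum_i\pi_i\E^i[S_1]$).

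The only substantive difference is in (a). The paper observes that $\mu$ lattice is equivalent to $\Prob(S_\sigma\in h\integers)=1$ and then simply invokes Shurenkov's lattice classification for Markov additive processes \cite{Shurenkov:1984}, which says this in turn is equivalent to the existence of shifts $h_1,\dots,h_p$ violating (A1). You instead prove the needed implication by hand: using the strong Markov property at successive visits to each state $j$ to pin down $S_n\bmod h$ as a function of $M_n$, and then reading off the support constraint on each $\jmu(\{k\}\times\cdot)$. Your argument is self-contained and avoids the external citation; the paper's route is a one-line appeal to a known structural result. Both are valid.
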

\begin{proof}
By \eqref{eq:sigma<->J general}, (a) is equivalent to
$\Prob(S_{\sigma} \in h\integers) < 1$ for all $h > 0$. On the
other hand, $\Prob(S_{\sigma} \in h\integers) = 1$ for $h>0$
is equivalent to the existence of $h_1,\ldots,h_p \in [0,h)$
with $\Prob^i(S_1 \in h_{M_1}-h_i + h\integers)=1$ for all
$i=1,\ldots,p$, see \cite{Shurenkov:1984}. The latter is excluded by (A1).

Regarding (b), observe that by \eqref{eq:sigma=n<->J} and the recurrence of the Markov chain $(M_n)_{n \geq 0}$
\begin{equation*}
m(\alpha)   ~=~ \E \bigg[\sum_{x \in \J} e^{-\alpha S(x)}\bigg] ~=~ \Prob(\sigma < \infty)  ~=~ 1.
\end{equation*}
Further, the function $\theta \mapsto m(\theta)$ is strictly decreasing in $\theta$,
hence $m(0) > 1$.

Regarding the proof of (c),
first notice that
\begin{equation*}
-m'(\alpha) ~=~ \E \bigg[ \sum_{x \in \J} e^{-\alpha S(x)} S(x)
\bigg]  ~=~ \E [S_{\sigma}]
\end{equation*}
having utilized \eqref{eq:sigma<->J general} for the second equality.
The latter can be rewritten using standard Markov renewal theory:
\begin{equation*}
\E [S_{\sigma}] ~=~ \E[\sigma] \sum_{i=1}^p \pi_i \E^i[S_1] ~=~ \pi_1^{-1} \sum_{i=1}^p \pi_i \E^i[S_1]
\end{equation*}
where $\pi_i=u_iv_i$, $i=1,\ldots,p$ (see Lemma \ref{Lem:M}).
Using \eqref{eq:M_n,S_n}, $\E^i[S_1]$ can be written as
\begin{equation*}
\E^i[S_1]   ~=~ \sum_{j=1}^p \E^i[S_1 \1_{\{M_1=j\}}]   ~=~ \frac{1}{v_i} \sum_{j=1}^p v_j (-\im_j)'(\alpha)
\end{equation*}
which yields
\begin{equation*}
-m'(\alpha) ~=~ (u_1v_1)^{-1} \sum_{i=1}^p \sum_{j=1}^p u_i v_j (-\im_j)'(\alpha).
\end{equation*}
\end{proof}

\section{Martingale convergence}        \label{sec:martingale convergence}

For the proofs of our main results we need certain results on the martingales
$(V_n)_{n \geq 1}$, $(\iWn)_{n \geq 1}$ and Nerman's martingale, and the relations between them.

\subsection{Basic martingale convergence results}   \label{subsec:basic martingale convergence}

In this section, for the reader's convenience, we review the basic
convergence theorems for the martingales $(V_n)_{n \geq 1}$ and
$(\iWn)_{n \geq 1}$.
Let $S = \{|\G_n|>0 \text{ for all } n \in \N\}$ denote the survival set of the multi-type branching
random walk.

\begin{Prop}    \label{Prop:multi-type martingale convergence}
Fix $i \in \{1,\ldots,p\}$.
Then the following assertions are equivalent:
\begin{align*}
(a) &   \
(\iWn)_{n \geq 0} \text{ is uniformly integrable w.r.t.}~\Prob^{i}  \quad
&   (d) &   \
\Prob^i(\iW > 0) > 0.   \qquad  \\
(b) &   \
\iWn \to \iW \text{ in } \mathcal{L}^1(\Prob^{i}) \text{ as } n \to \infty. \quad
&   (e) &   \ \{\iW > 0\} = S \ \Prob^{i} \text{-a.s.}  \qquad  \\
(c) &   \
\E^{i}[\iW] = 1.    \quad
&   (f) &   \
\text{(A5) holds.}  \qquad
\end{align*}
\end{Prop}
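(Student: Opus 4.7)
The equivalences (a) $\Leftrightarrow$ (b) $\Leftrightarrow$ (c) follow from general martingale convergence theory: $(\iWn)_{n \geq 0}$ is a nonnegative mean-one $\Prob^i$-martingale, converges $\Prob^i$-a.s.~to $\iW$ by Doob's theorem, and uniform integrability, $\L^1(\Prob^i)$-convergence and the preservation $\E^i[\iW] = 1$ are classically equivalent in this situation. The implication (c) $\Rightarrow$ (d) is immediate since $\E^i[\iW] = 1$ forces $\Prob^i(\iW > 0) > 0$.

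For (d) $\Leftrightarrow$ (e), first note that on the extinction set $S^\comp$ we have $\iWn = 0$ from some generation onwards, so $\iW = 0$ $\Prob^i$-a.s.~on $S^\comp$ and $\{\iW > 0\} \subseteq S$ $\Prob^i$-a.s. For the converse inclusion, the branching decomposition
\[
\iW ~=~ \sum_{|x|=1} \frac{v_{\tau(x)}}{v_i} e^{-\alpha S(x)} [\iW]_x,
\]
together with the conditional independence of the subtree limits $[\iW]_x$ given $\G_1$ (each $[\iW]_x$ being distributed as $\,^{\tau(x)}W$ under $\Prob^{\tau(x)}$), yields for $q_j := \Prob^j(\iW = 0)$ the fixed-point system $q_j = \E^j\big[\prod_{|x|=1} q_{\tau(x)}\big]$ for $j = 1, \ldots, p$. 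By irreducibility (A1) and supercriticality (A2), the multi-type Galton--Watson theory of extinction probabilities implies that the only solutions of this system in $[0,1]^p$ are $(1,\ldots,1)$ and the extinction probability vector $(\Prob^j(S^\comp))_j$. Under (d), $q_i < 1$, so the second case occurs, giving $\{\iW = 0\} = S^\comp$ $\Prob^i$-a.s.

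The crux is (c) $\Leftrightarrow$ (f), the multi-type Kesten--Stigum theorem, and the plan is to reduce to the single-type setting via the embedded point process of Section \ref{sec:embedded process}. By Proposition \ref{Prop:J inherits1}, $\Z_{\J^i}$ satisfies the single-type standing assumptions, so the classical single-type Biggins/Kesten--Stigum theorem applied to the additive martingale $V_n = \sum_{x \in \J^i_n} e^{-\alpha S(x)}$ states that $V_n \to V$ in $\L^1(\Prob^i)$ (equivalently, $\E^i[V] = 1$) if and only if $\E^i[V_1 \log^+ V_1] < \infty$. Since $\tau(x) = i$ for every $x \in \J^i_n$, one may rewrite $V_n = \sum_{x \in \J^i_n} \frac{v_{\tau(x)}}{v_i} e^{-\alpha S(x)}$, and Jagers's strong Markov property along optional lines realizes $V_n$ as $\iWn$ stopped at $\J^i_n$; letting $n \to \infty$ and using (A1) to ensure that $\bigcup_n \J^i_n$ exhausts the descent tree $\Prob^i$-a.s.~on $S$, one obtains $V = \iW$ $\Prob^i$-a.s., so the two $\L^1$-convergence statements coincide. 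The technical heart, and the main obstacle, is the translation of (A5) into the single-type condition $\E^i[V_1 \log^+ V_1] < \infty$: decomposing $V_1$ as a sum over an excursion of the embedded Markov chain $(M_n)$ from type $i$ back to type $i$ and combining the exponential integrability of $\sigma^i$ provided by Lemma \ref{Lem:M}(c) with elementary $x \log x$-moment manipulations should deliver precisely this equivalence.
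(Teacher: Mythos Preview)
Your treatment of (a)--(e) is fine and coincides with what the paper calls ``standard arguments''. The point of departure is the equivalence (c)$\Leftrightarrow$(f). The paper does not attempt a self-contained proof here: it simply cites Theorem~1 of Kyprianou and Rahimzadeh Sani \cite{Kyprianou+Sani:2001}, which establishes the multi-type $Z\log Z$ criterion directly via the spine construction, and remarks that the drift condition used there can be replaced by (A4). Your plan---reduce to the single-type Biggins theorem through the embedded type-$i$ process $(\Z_{\J^i_n})_{n\ge 0}$---is a genuinely different route, and an attractive one in spirit, but as written it has two gaps.

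First, the identification $V=\iW$ is not a consequence of ``letting $n\to\infty$'' and noting that $\bigcup_n \J^i_n$ exhausts the tree. Both $(V_n)_n$ and $(\iWn)_n$ are nonnegative supermartingales and converge a.s., but without uniform integrability there is no reason the limits coincide; this is precisely what optional-line machinery such as \cite[Theorem~6.1]{Biggins+Kyprianou:2004} is for, and a specific dissection condition (the one the paper verifies in \eqref{eq:check Biggins-Kyprianou}) has to be checked.

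Second, and more seriously, your sketch of the moment translation is not right. The object $V_1=\sum_{x\in\J^i} e^{-\alpha S(x)}$ is a \emph{tree} quantity---a sum over all first-return-to-type-$i$ individuals---whereas the excursion of $(M_n)_n$ up to $\sigma^i$ lives on the \emph{spinal} side of the size-biased picture; they are linked only through the many-to-one formula \eqref{eq:sigma<->J general}, which controls expectations of additive functionals of $V_1$, not $\E[V_1\log^+ V_1]$. In fact, the paper itself (Proposition~\ref{Prop:J inherits2}(a)) derives the implication (A5)$\Rightarrow\E[V_1\log^+V_1]<\infty$ only by first invoking Proposition~\ref{Prop:multi-type martingale convergence} to get $\E^i[\iW]=1$, then $V=\iW$, then the \emph{reverse} direction of the single-type Biggins theorem. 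Using this inside your argument would be circular, and you have not supplied an independent proof of the moment equivalence in either direction. Absent that, the embedding route does not close.
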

\begin{proof}[Source]
The equivalence between (b) and (f) follows from Theorem 1 in
\cite{Kyprianou+Sani:2001}. Note that in the cited reference
Kyprianou and Sani assume Condition \ref{Con:Ratio convergence mu}
to hold, that is, that $\bfM(\beta)$ has finite entries only for
some $\beta < \alpha$. However, their proof also works when this
assumption is replaced by the present (weaker) assumption
(A4).\!\footnote{Condition \ref{Con:Ratio convergence mu} is
assumed in \cite{Kyprianou+Sani:2001} in order to conclude that
the spinal walk, which corresponds to $(S_n)_{n \geq 0}$ here, has
finite-mean increments. The latter property follows from
the proof of our Proposition \ref{Prop:J inherits1}(c).
Further, note that the drift condition in Theorem 1 of
\cite{Kyprianou+Sani:2001}, $\log \rho(\theta)-\theta
\rho'(\theta)/\rho(\theta) > 0$ (retaining their notation),
is used only to show that the drift of $(S_n)_{n \geq 0}$,
$\E[S_1]$ is positive. The latter is clear here since
$S_1>0$ a.s.} The remaining equivalences follow from standard
arguments.
\end{proof}

When $p=1$, Proposition \ref{Prop:multi-type martingale convergence} is known as Biggins' martingale convergence theorem.
Versions of this theorem have been derived by Biggins \cite{Biggins:1977},
Lyons \cite{Lyons:1997} and Alsmeyer and Iksanov \cite{Alsmeyer+Iksanov:2009} (in increasing generality).

\begin{Prop}    \label{Prop:martingale moments}
Let $b: (0,\infty) \to (0,\infty)$ be a measurable, locally bounded function that is regularly varying at $+\infty$ of positive index.
Then for
\begin{equation*}
\E^i [\iW b(\log^+ \iW)] < \infty   \quad   \text{for } i=1,\ldots,p
\end{equation*}
to hold it is sufficient that
\begin{equation*}
\E^i \big[\iWone b(\log^+ \iWone) \log^+ \iWone \big]   ~<~ \infty
\quad   \text{for } i=1,\ldots,p.
\end{equation*}
\end{Prop}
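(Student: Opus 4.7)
My plan is to use the spine change of measure $\hat{\Prob}^i$ (defined by $(d\hat{\Prob}^i/d\Prob^i)|_{\F_n} = \iWn$, with $\F_n$ the generation-$n$ $\sigma$-algebra), which converts the desired moment via the duality $\hat{\E}^i[b(\log^+ \iW)] = \E^i[\iW b(\log^+ \iW)]$. The hypothesis $\E^i[\iWone b(\log^+ \iWone)\log^+ \iWone] < \infty$ already implies (A5), so Proposition \ref{Prop:multi-type martingale convergence} and the spine construction (as in \cite{Biggins+Kyprianou:2004,Kyprianou+Sani:2001}) are available. Under $\hat{\Prob}^i$, the tree carries a distinguished ray $(\xi_n)_{n \geq 0}$ whose type-position process equals the Markov additive walk $(M_n, S_n)$ of Section \ref{subsec:measure change}, and from every non-spine child of $\xi_n$ grows an independent copy of the original multi-type process under the appropriate $\Prob^j$. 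This yields the standard spine identity $\iW = \sum_{n \geq 0} e^{-\alpha S_n} R_n$, where $R_n$ collects the (properly normalized) contributions of the non-spine children of $\xi_n$ together with their subtree martingale limits.

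The core analytic estimate exploits that $b$ is regularly varying at $+\infty$ of positive index $\beta > 0$ and locally bounded; this yields the Pitman-type inequality $b(\log^+(s+t)) \le C_0 (1 + b(\log^+ s) + b(\log^+ t))$ for all $s, t \geq 0$. Combined with $\log^+(xy) \leq \log^+ x + \log^+ y$, $\log^+ e^{-\alpha S_n} = 0$, and a suitable slowly divergent threshold sequence to tame the tail of the spine sum, one obtains
\begin{equation*}
b(\log^+ \iW) ~\leq~ C_1 \Bigl(1 + \sum_{n \geq 0} b(\log^+ R_n)\Bigr),
\end{equation*}
so that the proof reduces to $\sum_{n \geq 0} \hat{\E}^i[b(\log^+ R_n)] < \infty$.

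By the Markov property of the spine and the size-biased law of the offspring of $\xi_n$, each $\hat{\E}^i[b(\log^+ R_n)]$ evaluates via a one-step many-to-one identity in terms of the original first-generation point process. Summing over $n$ (where the contribution decays thanks to the positive drift of $S_n$ under $\hat{\Prob}^i$, inherited from (A4) exactly as in Lemma \ref{Lem:M} and the computation in Proposition \ref{Prop:J inherits1}(c) for the embedded single-type chain), the total is bounded by a constant multiple of $\sum_{j=1}^p \E^j[\iWone\, b(\log^+ \iWone)\, \log^+ \iWone]$. The extra $\log^+ \iWone$-factor on the input side appears precisely as the compensator for the spine-index sum over $n$, which is where one "pays" a logarithm.

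The main obstacle is the accurate control of the infinite sum $\sum_n e^{-\alpha S_n} R_n$: because $b$ is only regularly varying (not exponentially flavored), a crude sup-bound loses too much, and one needs to split the sum at a slowly growing threshold and combine the positive drift of $S_n$ under $\hat{\Prob}^i$ with the regular-variation inequalities to absorb the tail. Once these technical details are arranged, the scheme is a direct multi-type extension of the single-type argument of \cite{Alsmeyer+Iksanov:2009}, with the $v_{\tau(y)}/v_i$ factors and the stationary measure $\pi$ of Lemma \ref{Lem:M}(a) providing the multi-type bookkeeping.
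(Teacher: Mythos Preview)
Your overall framework (spine change of measure, duality $\hat{\E}^i[b(\log^+\iW)]=\E^i[\iW b(\log^+\iW)]$, spine decomposition of $\iW$) matches the paper's, but the execution has two genuine gaps.

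\textbf{First gap: the displayed inequality discards the decay and makes the sum diverge.} You write $\iW=\sum_{n\geq 0}e^{-\alpha S_n}R_n$ and then bound $b(\log^+\iW)\leq C_1\big(1+\sum_{n\geq 0}b(\log^+R_n)\big)$, using $\log^+e^{-\alpha S_n}=0$ to drop the factor $e^{-\alpha S_n}$. But once that factor is gone, $R_n$ no longer depends on $S_n$ at all; under $\hat{\Prob}^i$, conditionally on the spine types, the law of $R_n$ depends only on $M_n$, so $\hat{\E}^i[b(\log^+R_n)]$ is bounded \emph{below} by a positive constant uniformly in $n$. Hence $\sum_{n\geq 0}\hat{\E}^i[b(\log^+R_n)]=\infty$, and the reduction you propose fails. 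Your remark that ``the contribution decays thanks to the positive drift of $S_n$'' cannot apply, because $S_n$ has been eliminated from the bound.

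\textbf{Second gap: circularity through the subtree limits.} You explicitly let $R_n$ include the subtree martingale limits $[\,^{j}\!W]_y$ of the non-spine children. Then $\hat{\E}^i[b(\log^+R_n)]$ is not a ``one-step many-to-one'' quantity; it involves moments of $^{j}\!W$ under $\Prob^{j}$, which is exactly what you are trying to prove is finite.

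The paper repairs both issues at once. It first conditions on the spine $\sigma$-algebra $\mathcal{C}$: since off-spine subtree limits have conditional mean $1$, the bound $\hat{\E}^i[\iW\mid\mathcal{C}]\leq\bar v\sum_{k\geq 0}e^{-\alpha S(\xi_k)}\big[\,^{\tau(\xi_k)}\!W_1\big]_{\xi_k}$ involves only first-generation weights and \emph{retains} the perpetuity structure with the factors $e^{-\alpha S(\xi_k)}$. It then replaces $b(\log\cdot)$ by an asymptotically equivalent concave $f$ (Lemma~2.1 in \cite{Alsmeyer+Iksanov:2009}) and applies Jensen, $\hat{\E}^i[f(\iW)]\leq\hat{\E}^i\big[f(\hat{\E}^i[\iW\mid\mathcal{C}])\big]$, thereby avoiding any circular use of moments of $\iW$. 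Finally, it dominates the (non-i.i.d.) spine increments by i.i.d.\ pairs $(C_n,D_n)$ obtained by taking maxima over types, and invokes the perpetuity moment result Theorem~1.2 of \cite{Alsmeyer+Iksanov:2009}, which is precisely where the extra $\log^+$ in the hypothesis is consumed. Your plan is missing the conditioning/Jensen step and the preservation of the multiplicative decay inside the perpetuity; without these, the argument cannot close.
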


Proposition \ref{Prop:martingale moments} is the multi-type analogue of one implication of Theorem 1.4 in \cite{Alsmeyer+Iksanov:2009}
and the proof given below follows closely the proof given in \cite{Alsmeyer+Iksanov:2009}.
It is likely that the converse implication of the proposition also holds true as it is the case for $p=1$.
However, we refrained from investigating it since we only need the converse implication in the single-type case.

\begin{proof}
We shall not make use of the fact that the point processes
$\iZ(\{j\} \times \cdot )$ are concentrated on $\R_{\geq 0}$
thereby proving the proposition in a greater generality than it is stated.

The following recursive construction of the {\it modified
multi-type branching random walk} with a distinguished ray
$(\xi_n)_{n \in \N_0}$, called {\it spine}, is based on the
presentations in \cite{Alsmeyer+Iksanov:2009,Kyprianou+Sani:2001}
and, therefore, kept short here. Start with $\xi_0 :=
\varnothing$ and suppose that the first $n$ generations have been
constructed with $\xi_k$ being the spinal individual in the $k$th
generation, $k\leq n$. Now, while $\xi_n$ has children the
displacements of which relative to $\xi_n$ are given by a point
process whose law has Radon-Nikodym derivative $\sum_{x\in
\mathcal{N}(\xi_n)}{v_{\tau(x)}\over v_{\tau(\xi_n)}}e^{-\alpha
(S(x)-S(\xi_n))}$ with respect to the law of
$^{\tau(\xi_n)}\mathcal{Z}$, where $\mathcal{N}(x) := \{xy: y \in
[\G_1]_x\}$ denotes the set of children of $x$, all other
individuals of the $n$th generation produce and spread offspring
according to independent copies of $^i\mathcal{Z}$, $i=1,\ldots,p$
(i.e., in the same way as in the original multi-type BRW). All
children of the individuals of the $n$th generation form the
$(n+1)$st generation, and among the children of $\xi_n$ the next
spinal individual $\xi_{n+1}$ is picked with probability
proportional to $v_k e^{-\alpha s}$ if $s$ is the displacement of
$\xi_{n+1}$ relative to $\xi_n$ and $k$ is the type of
$\xi_{n+1}$.

Let $\widehat{\Z}_n$ denote the point process describing the positions of all members of the
$n$th generation as well as their types. We call
$(\widehat{\Z}_n)_{n\geq 0}$ {\it modified multi-type
branching random walk} associated with the original multi-type branching random walk
$(\Z_n)_{n\geq 0}$. Both, $(\Z_n)_{n\geq 0}$ and $(\widehat{\Z}_n)_{n\geq 0}$,
may be viewed as a random weighted tree with an
additional distinguished ray (the spine) for $(\widehat{\Z}_n)_{n\geq 0}$. On an
appropriate measurable space $(\Bbb{X},\B)$ specified
below, they can be realized as the same random element under two
different probability measures $\Prob^{i}$ and $\widehat{\Prob}^{i}$,
respectively.

Let $\mathcal{R} = \{(0,\xi_1,\xi_2,\ldots): \xi_k \in \N \text{ for all } k \in \N\}$ denote the set of infinite rays (starting at $0$)
and, for a subtree $t \subset \I$, let $\mathfrak{F}(t)$ be the set of functions $s:\I\to\R\cup\{\infty\}$ assigning position $s(x)\in\R$ to $x\in t$ (with $s(\varnothing)=0$) and $s(x)=\infty$ to $x\not\in t$. Further, let $\Sigma(t)$ denote the set of functions $q:\I\to \{0,1,\ldots,p\}$ assigning
type $q(x)\in\{1,\ldots,p\}$ to $x\in t$ and $q(x)=0$ to $x\not\in t$.
Then let
\begin{equation*}
\mathbb{X} := \{(t,s,q, \xi):t\subset\mathcal{I}, s \in \mathfrak{F}(t), q \in \Sigma(t), \xi \in \mathcal{R}\}
\end{equation*}
be the space of weighted rooted subtrees of $\I$ with a distinguished ray (spine).
Endow this space with the $\sigma$-field $\B:=\sigma(\B_n: n=0,1,\ldots)$, where
$\B_{n}$ is the $\sigma$-field generated by the sets
\begin{equation*}
\{(t',s', q^\prime, \xi') \in\mathbb{X}: t_n'=t_n, s'_{|t_{n}} \in B, q'_{|t_{n}}=q_{|t_{n}}
\text{ and } \xi'_{|n} = \xi_{|n}\}
\end{equation*}
where $t_n' = \{x \in t': |x| \leq n\}$, $t_n$ ranges over the subtrees $\subseteq \I$ with $\max\{|x|: x\in t_n\} \leq n$,
$q$ ranges over $\Sigma(t)$, $B$ over the Borel sets $\subseteq \R^{t_n}$ and $\xi$ over $\mathcal{R}$.
The subscript $_{|t_n}$ means restriction to the coordinates in $t_n$ while the subscript $_{|n}$ means restriction to all coordinates up to the $n$th.
Similarly, let $\F_{n}\subset \B_{n}$ denote the $\sigma$-field generated by the sets
\begin{equation*}
\{(t',s', q^\prime, \xi') \in\mathbb{X}: t_n'=t_n, s'_{|t_{n}} \in B, q'_{|t_{n}}=q_{|t_{n}}\}
\end{equation*}
where again $t_n$ ranges over the subtrees $\subseteq \I$ with
$\max\{|x|: x\in t_n\} \leq n$, $q$ ranges over $\Sigma(t)$ and
$B$ over the Borel sets $\subseteq \R^{t_n}$. Then under
$\widehat\Prob^{i}$ the identity map
$(\G,\mathbf{S},\mathbf{\tau},\xi) =
(\G,(S(x))_{x\in\I},(\tau(x))_{x\in\I}, (\xi_n)_{n\ge 0})$
represents the modified multi-type branching random walk with its
spine, while $(\G,\mathbf{S},\mathbf{\tau})$ under $\Prob^{i}$
represents the original branching random walk (the way how
$\Prob^{i}$ picks a spine does not matter and thus remains
unspecified).\!\footnote{There is a slight abuse of notation in
interpreting $\Prob^{i}$ as a distribution on $(\Bbb{X},\B)$
rather than on the product space $(\Omega,\A)$. However, we think
that introducing a new notation for this proof would be
distracting rather than clarifying.} Finally, the random variable
$^i W_n: \mathbb{X}\to \R_{\geq 0}$, defined as
\begin{equation*}
^iW_n(t,s, q, \xi) ~:=~ \sum_{x \in t\,:\, |x|=n} \frac{v_{q(x)}}{v_i} e^{-\alpha s(x)}
\end{equation*}
is $\F_{n}$-measurable for each $n \geq 0$ and satisfies
$\iWn=\sum_{|x|=n}\frac{v_{\tau(x)}}{ v_i}e^{-\alpha S(x)}$.
$\iWn$ is the Radon-Nikodym derivative of $\widehat\Prob^i$
w.r.t.~$\Prob^i$ on $\F_n$, see formula (4) in
\cite{Kyprianou+Sani:2001}. Standard theory (\textit{cf.}~Lemmas
5.1 and 5.2 in \cite{Alsmeyer+Iksanov:2009}) yields that the
martingale $(\iWn)_{n\in\N_0}$ is uniformly $\Prob^{i}$-integrable
if and only if $\widehat\Prob^{i}\{\iW<\infty\}=1$ and, in the
case of uniform integrability,
\begin{equation}    \label{8}
\E^i [\iW h(\iW)] ~=~ \widehat\E^i [h(\iW)]
\end{equation}
for each nonnegative Borel function $h$ on $[0,\infty)$.

For $x\in \G$, put $L(x)=\frac{v_{\tau(x)}e^{-\alpha S(x)}}{v_{\tau(\varnothing)}}$ and notice that, if $|x|=k$,
\begin{equation*}
[\,^{\tau(\varnothing)}W_n]_x ~=~ \sum_{y:xy\in
\mathcal{G}_{k+n}}\frac{L(xy)}{L(x)},    \quad n=0,1,\ldots
\end{equation*}
Since all individuals off the spine reproduce and spread as in the
original multi-type BRW, we have that, under $\Prob^{i}$ and
$\widehat\Prob^{i}$, the $[\iWn]_x$ for $x$ off the spine and of
type $j$ have the same distributions as $\jWn$ under
$\Prob^j$, in particular,
\begin{equation}    \label{5}
\widehat{\E}^i \big[[\iWn]_x | \tau(x)=j, \xi_{|x|} \not = x\big] ~=~ \E^i\big[ [\iWn]_x | \tau(x)=j\big] ~=~
\E^j\big[ \jWn \big]  ~=~ 1.
\end{equation}
Let $\mathcal{C}$ be the $\sigma$-field generated by the family of
types of the children of the $\xi_n$ and displacements of these
relative to their mother, \textit{i.e.}, by the family
$\{\tau(x), S(x)-S(\xi_n): x \in
\mathcal{N}(\xi_n), n=0,1,\ldots\}$. For $n \geq 1$ and
$k=1,...,n$, put
\begin{equation*}
R_{n,k} ~:=~ \sum_{x \in \mathcal{N}(\xi_{k-1}) \setminus\{\xi_k\}} \frac{L(x)}{ L(\xi_{k-1})}\big( [\,^{\tau(\varnothing)}W_{n-k}]_x-1\big).
\end{equation*}
With these definitions we can rewrite $\iWn$ as follows
\begin{eqnarray*}
\iWn
& = &
L(\xi_n) + \sum_{k=1}^{n}\sum_{x \in \mathcal{N}(\xi_{k-1})\setminus \{\xi_k\}} L(x) [\iW_{\!n-k}]_x   \\
& = &
L(\xi_n) + \sum_{k=1}^n L(\xi_{k-1})\bigg(\sum_{x\in \mathcal{N}(\xi_{k-1})}\frac{L(x)}{L(\xi_{k-1})}-\frac{L(\xi_{k})}{L(\xi_{k-1})}
+ R_{n,k}\bigg)\\
& = &
L(\xi_n) + \sum_{k=1}^n\bigg(L(\xi_{k-1})\bigg(\sum_{x \in  \mathcal{N}(\xi_{k-1})} \frac{L(x)}{L(\xi_{k-1})}+R_{n,k}\bigg)-L(\xi_k)\bigg)
\quad   \widehat\Prob^i \text{-a.s.}
\end{eqnarray*}
This implies
\begin{eqnarray}
\widehat{\E}^{i}[\iWn | \mathcal{C}] & = &
\sum_{k=0}^{n-1}\bigg(L(\xi_k)\sum_{x \in
\mathcal{N}(\xi_k)}\frac{L(x)}{ L(\xi_k)}\bigg)
-\sum_{k=1}^{n-1} L(\xi_k)      \notag  \\
& \leq &
\sum_{k=0}^{n-1}\bigg(L(\xi_k)\sum_{x \in \mathcal{N}(\xi_k)}\frac{L(x)}{ L(\xi_k)}\bigg)
\quad   \widehat\Prob^i \text{-a.s.},       \label{2}
\end{eqnarray}
for $\widehat\E^i[L(\xi_{k-1})R_{n,k}|\mathcal{C}] = L(\xi_{k-1})\widehat\E^i[R_{n,k}|\mathcal{C}]=0$
$\widehat{\Prob}_i$-a.s.~as a consequence of \eqref{5}
and since the $[\,^{\tau(\varnothing)}W_{n-k}]_x$ for $x$ off the spine are independent of $\mathcal{C}$.

According to Proposition \ref{Prop:multi-type martingale convergence},
the assumptions of the proposition ensure that the martingale $(\iWn)_{n \in \N_0}$ is
uniformly $\Prob^{i}$-integrable, hence $\widehat\Prob^{i}\{^iW<\infty\}=1$.
Passing to the limit as $n\to\infty$ in \eqref{2} and using Fatou's lemma, we get
\begin{equation}    \label{9}
\widehat \E^i[\iW|\mathcal{C}]
~\leq~ \bar{v}\sum_{k \ge 0} \bigg(e^{-\alpha S(\xi_k)}
\sum_{x\in\mathcal{N}(\xi_k)}\frac{L(x)}{L(\xi_k)}\bigg)
\quad   \widehat \Prob^{i}  \text{-a.s.}
\end{equation}
where $\bar{v}:=\max(v_1,\ldots, v_p)/\min(v_1,\ldots,v_p) \in [1,\infty)$.

Let $(A_n^{(k)}, B_n^{(k)})$, $k=1,\ldots,p, n \geq 1$
be independent under $\widehat\Prob^{i}$ with
\begin{align}   \label{4}
&\widehat\Prob^{i}\big((A_n^{(k)},B_n^{(k)}) \in B\big) \notag  \\
&=~\widehat\Prob^{i}\bigg(\!\bigg(e^{-\alpha(S(\xi_n)-S(\xi_{n-1}))},
\sum_{x\in \mathcal{N}(\xi_{n-1})} \frac{v_{\tau(x)}}{v_{\tau(\xi_{n-1})}}e^{-\alpha(S(x)-S(\xi_{n-1}))} \! \bigg)
\! \in {B}\bigg|\tau(\xi_{n-1})=k\bigg)   \notag  \\
&=~\E^k \bigg[\sum_{|x|=1}\frac{v_{\tau(x)}}{v_k}e^{-\alpha S(x)}
\1_{B} \bigg(e^{-\alpha
S(x)},\sum_{|y|=1}\frac{v_{\tau(y)}}{ v_k}e^{-\alpha S(y)}
\bigg)\bigg],
\end{align}
where $B$ is a Borel subset of $\R_{\geq 0} \times \R_{\geq 0}$. Now define
\begin{equation*}
(C_n,D_n)   ~:=~    \Big(\max_{k = 1, \ldots, p} A_n^{(k)}, \max_{k = 1, \ldots, p} B_n^{(k)}\Big),
\quad   n \geq 1.
\end{equation*}
Then the vectors $(C_n,D_n)_{n\geq 1}$ are i.i.d.
Further, for $x,y\geq 0$ and $n \geq 1$,
\begin{align*}
\widehat\Prob^{i} & \bigg(e^{-\alpha(S(\xi_n)-S(\xi_{n-1}))}\leq x,
\sum_{x\in \mathcal{N}(\xi_{n-1})}\frac{v_{\tau(x)}}{v_{\tau(\xi_{n-1})}}e^{-\alpha(S(x)-S(\xi_{n-1}))} \leq y\bigg)    \\
&=~ \sum_{k=1}^p\widehat{\Prob}^{i}(\tau(\xi_{n-1})=k) \, \widehat{\Prob}^{i}(A_n^{(k)}\leq x, B_n^{(k)}\leq y)     \\
&\geq~ \widehat{\Prob}^{i}\Big(\max_{k = 1, \ldots, p}
A_n^{(k)}\leq x, \max_{k = 1, \ldots, p} B_n^{(k)}\leq y \Big) ~=~
\widehat{\Prob}^{i}(C_n\leq x, D_n\leq y).
\end{align*}
Hence,
\begin{equation}    \label{10}
\sum_{k \geq 0} \bigg(e^{-\alpha S(\xi_k)} \sum_{x \in
\mathcal{N}(\xi_k)}\frac{L(x)}{ L(\xi_k)}\bigg)
~\stackrel{\mathrm{d}}{\leq}~     \sum_{k \geq 1}
\bigg(\prod_{j<k} C_j \bigg) D_k
\end{equation}
where ``$\stackrel{\mathrm{d}}{\leq}$'' denotes stochastic
domination (w.r.t.~$\widehat\Prob^i$ here). 

By Lemma 2.1 in \cite{Alsmeyer+Iksanov:2009}, there exist
increasing and concave functions $f$ and $g$ on $[0,\infty)$
with $f(0)=g(0)=0$ such that $b(\log x)\sim f(x)$ and $b(\log
x)\log x\sim g(x)$ as $x\to\infty$. Therefore it suffices to prove
that
\begin{equation}\label{1} \E^i[^iW_1g(^iW_1)]<\infty    \quad   \text{for } i=1,\ldots,p
\end{equation}
entails
\begin{equation*}
\E^i[^iWf(^iW)] < \infty    \quad   \text{for } i=1,\ldots,p.
\end{equation*}
Using \eqref{4} we have $\widehat\E^i [g(B_1^{(k)})] = \E^k[^kW_1g(^kW_1)] < \infty$
for $k=1,\ldots,p$, where the finiteness is secured by \eqref{1}. Hence
\begin{eqnarray}    \label{6}
\widehat\E^i [g(D_1)]
& = &
\widehat \E^i \big[g(\max_{k = 1, \ldots, p} B_1^{(k)})\big]    \notag  \\
& \leq &
\widehat \E^i \big[g(B_1^{(1)}+\ldots+ B_1^{(p)})\big]
~\leq~  \sum_{k=1}^p \widehat\E^i \big[g(B_1^{(k)})\big]
~<~ \infty,     \label{6}
\end{eqnarray}
the penultimate inequality following by subadditivity.
Since
\begin{eqnarray*}
\underline{v} e^{-\alpha(S(\xi_n)-S(\xi_{n-1}))}
& \leq &
\frac{v_{\tau(\xi_n)}}{v_{\tau(\xi_{n-1})}} e^{-\alpha(S(\xi_n)-S(\xi_{n-1}))}  \\
& \leq &
\sum_{x\in \mathcal{N}(\xi_{n-1})}\frac{v_{\tau(x)}}{v_{\tau(\xi_{n-1})}} e^{-\alpha(S(x)-S(\xi_{n-1}))}
\quad   \widehat\Prob^{i}\text{-a.s.}
\end{eqnarray*}
for each $n\geq 1$, where
$\underline{v}:=\min(v_1,\ldots,v_p)/\max(v_1,\ldots,v_p) \in (0,1]$,
we infer with the help of \eqref{4} that, for $n\geq 1$, under $\widehat\Prob^{i}$,
\begin{equation*}
\underline{v}C_n ~\stackrel{\mathrm{d}}{\leq}~  D_n.
\end{equation*}
The latter inequality in combination with \eqref{6} and the concavity of $g$ implies
\begin{equation}    \label{7}
\underline{v} \widehat\E^i [g(C_1)]
~\leq~  \widehat\E^i [g(\underline{v}C_1)]
~\leq~  \widehat\E^i [g(D_1)]
~<~ \infty.
\end{equation}
By Theorem 1.2 in \cite{Alsmeyer+Iksanov:2009}
\eqref{6} and \eqref{7} are sufficient for
\begin{equation*}
\widehat \E^i \bigg[ f\bigg(\sum_{k \geq 1} \bigg(\prod_{j<k} C_j \bigg) D_k\bigg)\bigg] ~<~    \infty
\end{equation*}
to hold. This together with \eqref{9}
and \eqref{10} yields
\begin{eqnarray}    \label{link5.4}
\E^i [^iWf(^iW)]
& = &
\widehat\E^i [f(^iW)]
~\leq~  \widehat\E^i \bigg[f\bigg(\bar{v}\bigg(\sum_{k \geq 1} \bigg(\prod_{j<k} C_j \bigg) D_k\bigg)\bigg) \bigg]  \notag  \\
& \leq & \bar{v} \widehat\E^i \bigg[f\bigg(\sum_{k \geq 1} \bigg(\prod_{j<k} C_j \bigg) D_k\bigg)\bigg]
~<~ \infty,
\end{eqnarray}
where the equality is a consequence of \eqref{8}, the first
inequality is justified by an application of Jensen's inequality
for conditional expectations, while the second follows from the
inequality $f(bx)\leq bf(x)$ which holds for fixed $b \geq 1$ and any $x>0$.
\end{proof}

\subsection{Rate of convergence of Nerman's martingale}

In this section, we assume that $p=1$, \textit{i.e.}, we are in
the single-type case. Then the martingales $(V_n)_{n \geq 0}$ and
$(\oneW_n)_{n \geq 0}$ are identical. There is a continuous-time
analogue of the martingale $(V_n)_{n \geq 0}$ which is important
in the study of the asymptotic behavior of the general branching
process. Let
\begin{equation}    \label{eq:J(t)}
\J(t) := \{x \in \G: S(x) > t,\,S(x|_k) \leq t \text{ for all } k < |x|\}
\end{equation}
and define
\begin{equation}    \label{eq:V(t)}
V(t)    ~:=~    \sum_{x \in \J(t)} e^{-\alpha S(x)},    \quad   t \geq 0.
\end{equation}
The family $(V(t))_{t \geq 0}$ can be viewed as Nerman's
martingale evaluated at certain random times. We now make this
connection precise. Order the individuals according to their times
of birth: $x_1$ is the ancestor, $x_2$ its first-born child
\textit{etc.}
In case that several births take place at the same time,
we order individuals first by generation and within generations according to
the lexicographic order.
We let $t_n:=S(x_n)$ be the time of birth of
the $n$th individual in the process. For $k \in \N$, let $Y_k :=
[V_1]_{x_k}-1$ and $\H_k:=\sigma(\Z(x_1),\ldots,\Z(x_k))$.
Further, define
\begin{equation*}
R_n := 1+\sum_{k=1}^n e^{-\alpha t_k} Y_k, \quad n \in \N.
\end{equation*}
Then $V(t) = R_{T_t}$, $t\geq 0$ where $T_t = \#\{x \in \G: S(x)
\leq t\}$ is the total number of births up to and including time
$t$. It is known (see Lemma 2.3 and Proposition 2.4 in
\cite{Nerman:1981} or Theorem 4.1 in p.~371 in
\cite{Asmussen+Hering:1983}) that $(R_n, \H_n)_{n\in\N}$ and
$(V(t), \H_{T_t})_{t \geq 0}$ are nonnegative martingales.
Furthermore, $V(t)$ and $R_n$ converge a.s., as $t \to \infty$
and $n \to \infty$, respectively, to the random variable $V$, the
a.s.~limit of Biggins' martingale $(V_n)_{n \geq 0}$, see
\textit{e.g.}~\cite[Theorem 3.3]{Gatzouras:2000}.

For the proof of Theorem \ref{Thm:Rates of convergence}, we need information about the rate of convergence of $V(t)$ to $V$.
While various results for the rate of convergence of Biggins' martingale to its limit have been established
\cite{Alsmeyer+al:2009,Iksanov:2006,Iksanov+Meiners:2010a},
we are not aware of a corresponding result for Nerman's martingale.
The following proposition provides such a result.

\begin{Prop}    \label{Prop:rate of convergence of Nerman's martingale}
Suppose that $\E [V_1 \log^+ V_1] < \infty$ and that, for some $\varepsilon > 0$,
\begin{equation}    \label{eq:T_te^alphat sufficient}
\E\bigg[\sum_{|x|=1} e^{-\alpha S(x)} S(x)
(\log^+(S(x)))^{1+\varepsilon} \bigg] < \infty.
\end{equation}
Let $\delta > 0$.
Then
\begin{equation}    \label{eq:criterion}
\lim_{t \to \infty} (\log t)^\delta \, \E[V_1(\log V_1-\log t) \1_{\{V_1>t\}}]  ~=~ 0
\end{equation}
is necessary and sufficient for
\begin{equation}    \label{eq:rate of convergence of V(t)}
\lim_{t \to \infty} t^{\delta} |V(t)-V|=0   \quad   \text{a.s.}
\end{equation}
to hold.
In particular, the simpler condition $\E [V_1 (\log^+ V_1)^{1+\delta}] < \infty$ is sufficient for \eqref{eq:rate of convergence of V(t)} to hold.
\end{Prop}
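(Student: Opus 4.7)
The plan is to view $V(t) = R_{T_t}$ as a time-change of the discrete martingale $(R_n)$, first establish a $(\log n)^{-\delta}$-rate for $R_n \to V$, and then transfer it to continuous time via a control on the birth-counting process $T_t$. Since $V = R_\infty$ and $V(t) = R_{T_t}$, one has the tail-sum representation
\begin{equation*}
V - V(t) ~=~ \sum_{k > T_t} e^{-\alpha t_k} Y_k.
\end{equation*}
Under the present assumptions, applying the single-type strong law of large numbers to the characteristic $\phi \equiv \1_{[0,\infty)}$ (whose conditions are satisfied thanks to \eqref{eq:T_te^alphat sufficient} and the trivial boundedness of $e^{-\alpha t}$, \textit{cf.}~\cite{Nerman:1981}) yields $e^{-\alpha t} T_t \to c V$ almost surely on $\{V > 0\}$. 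Consequently $\alpha t - \log T_t = O(1)$ a.s., so the desired continuous-time statement $t^\delta |V - V(t)| \to 0$ is equivalent to the discrete statement $(\log n)^\delta |V - R_n| \to 0$ a.s.

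The heart of the proof will then be the equivalence
\begin{equation*}
(\log n)^\delta |V - R_n| \to 0 \text{ a.s.} \quad \Longleftrightarrow \quad (\log n)^\delta \E[V_1 (\log V_1 - \log n) \1_{\{V_1 > n\}}] \to 0.
\end{equation*}
For sufficiency, I would truncate each summand at the matching level $c_k := e^{\alpha t_k}$, writing $e^{-\alpha t_k} Y_k$ as a bounded recentred martingale difference plus a big-jump remainder. The big-jump series is summable by Borel--Cantelli, using $\E[V_1 \log^+ V_1] < \infty$ combined with the control of the $t_k$ afforded by \eqref{eq:T_te^alphat sufficient}. The bounded piece is handled by Doob's $L^2$ maximal inequality applied along a dyadic grid of times, with the conditional-second-moment sum $\sum_{k > n} e^{-2\alpha t_k} \E[V_1^2 \1_{\{V_1 \leq c_k\}}]$ controlled via the identity $\E[V_1^2 \1_{\{V_1 \leq c\}}] = 2 \int_0^c x \Prob(V_1 > x) \dx$ and the hypothesis \eqref{eq:criterion}. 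Necessity is obtained by isolating the contribution of a single typical large jump $e^{-\alpha t_k} Y_k$ to the tail sum and applying Fubini, which translates the rate on the tail back into \eqref{eq:criterion}.

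The simpler moment criterion follows from the pointwise inequality
\begin{equation*}
(\log t)^\delta \E[V_1 (\log V_1 - \log t) \1_{\{V_1 > t\}}] ~\leq~ \E[V_1 (\log^+ V_1)^{1+\delta} \1_{\{V_1 > t\}}],
\end{equation*}
whose right-hand side tends to zero as $t \to \infty$ by dominated convergence when $\E[V_1 (\log^+ V_1)^{1+\delta}] < \infty$. The main obstacle I anticipate is the truncation step: the cutoff must be matched to the martingale weights $e^{-\alpha t_k}$ in order to yield precisely the $(\log n)^{-\delta}$ rate (rather than a geometric rate), and uniform (not merely along integers) control is required because the tail sum is evaluated at the random stopping time $T_t$. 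The two-sided bound on $\alpha t - \log T_t$ from the reduction step, combined with Doob's maximal inequality applied dyadically, is what reduces this uniform problem to an along-integers estimate.
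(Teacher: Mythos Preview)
Your reduction to the discrete martingale $(R_n)$ via $T_t\asymp e^{\alpha t}$ and the last display (the pointwise inequality giving the ``simpler condition'') are exactly what the paper does. The three-step split (truncate, control the martingale tail, identify the centering correction with the criterion) is also the paper's architecture. Two points, however, need correction.

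\emph{Truncation level.} Cutting at $c_k=e^{\alpha t_k}\asymp k$ is too crude for the variance step. With this choice the tail second moment behaves like
\[
\sum_{k>n} k^{-2}\,\E\!\big[V_1^2\1_{\{V_1\le k\}}\big]\ \asymp\ n^{-1}\E\!\big[V_1^2\1_{\{V_1\le n\}}\big]+\E\!\big[V_1\1_{\{V_1>n\}}\big],
\]
and to push a Doob/$L^2$ argument through a dyadic grid you would need $\sum_j j^{2\delta}\,\E[V_1\1_{\{V_1>2^j\}}]<\infty$, i.e.\ essentially $\E[V_1(\log^+V_1)^{1+2\delta}]<\infty$, which is \emph{strictly stronger} than the criterion. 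The paper instead truncates at $n(\log n)^{-\theta}$ and, rather than bounding tail variances, shows that the $(\log n)^{\theta}$-\emph{weighted} series of increments
\[
\sum_{n\ge2}(\log n)^{\theta}\big(R_{n+1}-R_n+\varepsilon_n\big)
\]
converges a.s.\ (both the Borel--Cantelli and the conditional-variance series then reduce to the single condition $\E[V_1(\log^+V_1)^{\theta}]<\infty$). An Abel-type summation lemma (if $\sum\alpha_n\beta_n$ converges and $\beta_n\uparrow\infty$ then $\beta_n\sum_{k\ge n}\alpha_k\to0$) then yields $(\log n)^{\theta}(V-R_n+\sum_{k\ge n}\varepsilon_k)\to0$, so the rate statement becomes \emph{equivalent} to $(\log n)^{\theta}\sum_{k\ge n}\varepsilon_k\to0$, which one computes explicitly to be the criterion. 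This Abel step is what replaces your Doob-along-dyadics idea and is what makes the argument two-sided (necessity and sufficiency at once); your ``isolate a single large jump'' sketch for necessity does not obviously close.

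\emph{Bootstrap for $\delta>1$.} The truncation lemma above needs $\E[V_1(\log^+V_1)^{\gamma}]<\infty$ for some $\gamma\ge\theta$; when $\delta>1$ this is not among the hypotheses. The paper first proves the implication chain
\[
\E[V_1(\log^+V_1)^{1+\delta}]<\infty\ \Rightarrow\ \text{criterion}\ \Rightarrow\ \E[V_1(\log^+V_1)^{1+\delta-\varepsilon}]<\infty\ \ (\forall\,\varepsilon>0),
\]
and uses the right-hand implication to manufacture the moment needed to run the truncation with $\theta=\delta$. You will need the same bootstrap (or an equivalent device) to cover $\delta>1$.
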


\begin{Rem} \label{Rem:T_te^alphat}
Assumption \eqref{eq:T_te^alphat sufficient} enables us to apply
Theorem 5.4 in \cite{Nerman:1981} which, with $\phi(t) =
\1_{[0,\infty)}(t)$, implies that $e^{-\alpha t} T_t \to dV$
a.s.~as $t \to \infty$ for some constant $d > 0$. In particular,
\eqref{eq:T_te^alphat sufficient} guarantees that
\begin{equation}    \label{eq:T_te^alphat}
T_t \asymp e^{\alpha t} \quad   \text{a.s.~on } S \text{ as } t \to \infty
\end{equation}
where $S = \{|\G_n|>0 \text{ for all } n \in \N_n\}$ is the
survival set, and that
\begin{equation}    \label{eq:T_te^alphat2}
e^{-\alpha t_n} \asymp n^{-1} \quad \text{a.s.~on } S \text{ as }
n \to \infty
\end{equation}
which follows on substituting $t=t_n$ in \eqref{eq:T_te^alphat}.
Actually, \eqref{eq:T_te^alphat sufficient} in Proposition
\ref{Prop:rate of convergence of Nerman's martingale} may be
replaced by the weaker Condition 5.1 in \cite{Nerman:1981} or any
other assumption
which ensures \eqref{eq:T_te^alphat}.
\end{Rem}

Before we prove the proposition, we recall a technical result stated as Lemma 4.2 in p.~37 in \cite{Asmussen+Hering:1983}.
\begin{Lemma}   \label{Lem:Abel}
Let $(\alpha_n)_{n \in \N}$, $(\beta_n)_{n \in \N}$ be sequences of real numbers with $0 < \beta_n \uparrow \infty$ as $n \to \infty$.
If  $\sum_{n\geq 1}\alpha_n \beta_n$ converges, then $\lim_{n \to \infty} \beta_n \sum_{k\geq n} \alpha_k=0$.
\end{Lemma}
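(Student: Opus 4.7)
The plan is to view the statement as a refinement of Kronecker's lemma for tails and prove it by Abel summation by parts. Set $T_k := \alpha_k\beta_k$ and $U_n := \sum_{k=1}^n T_k$. By hypothesis $U_n\to U$ for some $U\in\R$, so the tail $r_n := U-U_{n-1} = \sum_{k\geq n}T_k$ satisfies $r_n\to 0$. The goal is to re-express $\sum_{k\geq n}\alpha_k = \sum_{k\geq n}T_k/\beta_k$ in terms of the $r_k$ and the increments of $1/\beta_k$, and then exploit that the positive sequence $1/\beta_k$ decreases to $0$.

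Concretely, I would apply Abel's summation formula on finite stretches $n\le k\le m$:
$$\sum_{k=n}^m \frac{T_k}{\beta_k} ~=~ \frac{U_m}{\beta_m} - \frac{U_{n-1}}{\beta_n} + \sum_{k=n}^{m-1} U_k\left(\frac{1}{\beta_k}-\frac{1}{\beta_{k+1}}\right).$$
Since $U_m\to U$ and $\beta_m\uparrow\infty$, the first term vanishes as $m\to\infty$; since $(U_k)$ is bounded and $\sum_k (1/\beta_k-1/\beta_{k+1})$ telescopes to $1/\beta_1$, the series on the right converges absolutely, so in particular $\sum_{k\geq n}\alpha_k$ exists. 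Substituting $U_{n-1}=U-r_n$ and $U_k=U-r_{k+1}$ and using $\sum_{k\geq n}(1/\beta_k-1/\beta_{k+1})=1/\beta_n$, the $U$-contributions cancel and one obtains the clean identity
$$\sum_{k\geq n}\alpha_k ~=~ \frac{r_n}{\beta_n} - \sum_{k=n}^{\infty} r_{k+1}\left(\frac{1}{\beta_k}-\frac{1}{\beta_{k+1}}\right).$$

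Multiplying by $\beta_n$ gives
$$\beta_n\sum_{k\geq n}\alpha_k ~=~ r_n - \beta_n\sum_{k=n}^\infty r_{k+1}\left(\frac{1}{\beta_k}-\frac{1}{\beta_{k+1}}\right),$$
and the right-hand side is easy to control: for any $\varepsilon>0$, pick $N$ with $|r_k|<\varepsilon$ for $k\geq N$; then for $n\geq N-1$ the second term is bounded in absolute value by $\varepsilon\beta_n\sum_{k\geq n}(1/\beta_k-1/\beta_{k+1})=\varepsilon$, while $|r_n|\to 0$. Letting $n\to\infty$ and then $\varepsilon\to 0$ completes the proof.

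There is no real obstacle here; the only point requiring care is the telescoping of the increments of $1/\beta_k$, which legitimately uses both $\beta_k\uparrow\infty$ and $\beta_k>0$. Everything else is routine summation by parts.
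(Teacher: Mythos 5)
Your proof is correct: the Abel summation identity is valid, the absolute convergence of $\sum_{k\geq n} U_k\bigl(1/\beta_k-1/\beta_{k+1}\bigr)$ (and hence the existence of the tail sums $\sum_{k\geq n}\alpha_k$) is properly justified, the cancellation of the $U$-terms uses exactly the telescoping $\sum_{k\geq n}(1/\beta_k-1/\beta_{k+1})=1/\beta_n$ with $\beta_k\uparrow\infty$, and the final $\varepsilon$-estimate is clean. The paper itself gives no proof of this lemma --- it simply cites Lemma 4.2 of Asmussen and Hering --- and your summation-by-parts argument is the standard one behind that reference, so it fills the gap in an essentially canonical way.
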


The proof of the proposition is based on two lemmas.

\begin{Lemma}   \label{Lem:Rate of convergence R_n}
Suppose that $\E [V_1(\log^+V_1)^{\gamma}] < \infty$ for some
$\gamma \geq 1$ and let $\theta \in (0,\gamma]$. Then $\lim_{n \to
\infty} (\log n)^\theta (V-R_n) = 0$ a.s.~on $S$ is equivalent to
\begin{equation*}
\lim_{n \to \infty} (\log n)^\theta \sum_{k\geq n} e^{-\alpha t_{k+1}} \E [V_1\1_{\{V_1>k(\log k)^{-\theta}\}}] = 0
\quad \text{a.s.~on } S.
\end{equation*}
\end{Lemma}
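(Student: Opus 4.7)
I truncate $U_k := [V_1]_{x_k}$ at level $c_k := k(\log k)^{-\theta}$ and decompose the martingale increments $Y_k = U_k - 1$ as $Y_k = A_k + B_k - g(c_k)$, where
\[
A_k := U_k \1_{\{U_k \leq c_k\}} - \E[U_k \1_{\{U_k \leq c_k\}}], \quad B_k := U_k \1_{\{U_k > c_k\}}, \quad g(c) := \E[V_1 \1_{\{V_1 > c\}}].
\]
Substituting into $V - R_n = \sum_{k > n} e^{-\alpha t_k} Y_k$ yields $V - R_n = T_n^A + T_n^B - T_n^g$, with $T_n^A, T_n^B, T_n^g$ the corresponding tail sums. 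Because $k e^{-\alpha t_k} \to dV$ a.s.\ on $S$ by Remark \ref{Rem:T_te^alphat}, the ratio $e^{-\alpha t_k}/e^{-\alpha t_{k+1}}$ is bounded above and below by positive random constants on $S$, so $(\log n)^\theta T_n^g \to 0$ a.s.\ on $S$ is equivalent to the RHS condition of the lemma. It therefore suffices to prove $(\log n)^\theta T_n^A \to 0$ and $(\log n)^\theta T_n^B \to 0$, $\Prob$-a.s.\ on $S$.

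For $T_n^B$ I use Borel--Cantelli. A change of variables shows $\sum_k \Prob(V_1 > c_k)$ is comparable to $\int_1^\infty \Prob(V_1 > u)(\log u)^\theta du$, which is finite iff $\E[V_1(\log^+ V_1)^\theta] < \infty$ (integration by parts). Since $\gamma \geq \theta$ and $\E[V_1(\log^+ V_1)^\gamma] < \infty$, only finitely many of the events $\{U_k > c_k\}$ occur a.s., hence $B_k = 0$ eventually and $T_n^B$ vanishes for all large $n$.

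For $T_n^A$ note that $U_k$ is independent of $\H_{k-1}$ while $t_k \in \H_{k-1}$, so $(e^{-\alpha t_k} A_k)$ is a martingale difference sequence w.r.t.\ $(\H_k)$. I apply Lemma \ref{Lem:Abel} with $\alpha_k := e^{-\alpha t_k} A_k$ and $\beta_k := (\log k)^\theta$; it suffices to prove convergence of the weighted series $\sum_k (\log k)^\theta e^{-\alpha t_k} A_k$ a.s.\ on $S$. I exhaust $S$ by the events $E_C := \{\sup_k k e^{-\alpha t_k} \leq C\}$, $C \in \N$, on which the weighted differences are uniformly bounded by $2C$ in absolute value. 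Using independence of $t_k$ and $A_k$ together with the Fubini identity $\E[A_k^2] \leq 2\int_0^{c_k} g(u) du$ and the Markov-based bound $g(u) \leq C'(\log u)^{-\gamma}$ from the moment hypothesis, I control the $L^2$-norm of the martingale stopped on $E_C$. A Borel--Cantelli argument along the dyadic subsequence $n_j := 2^j$, combined with Doob's maximal inequality, then delivers a.s.\ convergence of the weighted series on $S \cap E_C$, and letting $C \to \infty$ covers $S$.

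The main technical obstacle is this last $L^2$ step: the random weights $e^{-\alpha t_k}$ must be reduced to the deterministic bound $C/k$ on $E_C$ via Remark \ref{Rem:T_te^alphat}, and the truncation level $c_k = k(\log k)^{-\theta}$ has to be calibrated precisely to balance against the logarithmic moment so that the summability of the variance estimate comes out. Once convergence of the weighted series is in hand, Lemma \ref{Lem:Abel} immediately yields the $(\log n)^{-\theta}$ rate for $T_n^A$, closing the equivalence.
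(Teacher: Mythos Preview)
Your overall strategy matches the paper's: truncate at level $c_k = k(\log k)^{-\theta}$, split $Y_k$ into a centered truncated piece, an overshoot piece handled by Borel--Cantelli, and a compensator piece that carries the equivalence, then invoke Lemma~\ref{Lem:Abel}. The paper proceeds in exactly this way.

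However, your $L^2$ step for $T_n^A$ has a genuine gap. The chain of bounds
\[
\E[A_k^2] \;\leq\; 2\int_0^{c_k} g(u)\,du, \qquad g(u) \leq C'(\log u)^{-\gamma}
\]
gives $\E[A_k^2] \lesssim c_k(\log c_k)^{-\gamma} \sim k(\log k)^{-\theta-\gamma}$, so the weighted variance series is of order $\sum_k k^{-1}(\log k)^{\theta-\gamma}$, which diverges whenever $\theta > \gamma - 1$. Since the lemma must cover the full range $\theta \in (0,\gamma]$, in particular $\theta=\gamma$, this does not close. The loss already enters when you pass from $\E[V_1^2\1_{\{V_1\leq c_k\}}]$ to $2\int_0^{c_k} g(u)\,du$: the difference is essentially $c_k\, g(c_k)$, and one checks that $\sum_k (\log k)^{2\theta} k^{-2}\, c_k\, g(c_k)<\infty$ only when $\E[V_1(\log^+V_1)^{\theta+1}]<\infty$, i.e.\ one logarithmic moment more than you have. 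The fix is to keep $\E[A_k^2]\leq \E[V_1^2\1_{\{V_1\leq c_k\}}]$ and apply Fubini directly to $\sum_k (\log k)^{2\theta} k^{-2}\,\E[V_1^2\1_{\{V_1\leq c_k\}}]$; this sum is finite iff $\E[V_1(\log^+V_1)^\theta]<\infty$, which holds since $\theta\leq\gamma$. That is exactly the computation the paper carries out.

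A secondary remark: the detour through the events $E_C$, dyadic subsequences and Doob's inequality is unnecessary. Since $t_k$ is $\H_{k-1}$-measurable and $A_k$ is independent of $\H_{k-1}$, the weighted partial sums form a martingale with conditional variance increments $(\log k)^{2\theta} e^{-2\alpha t_k}\,\E[A_k^2]$; on $S$ one has $e^{-\alpha t_k}=O(k^{-1})$ a.s., so (once the variance bound is repaired as above) the conditional variances are a.s.\ summable and standard $L^2$-bounded martingale convergence applies directly. The paper takes this shorter route.
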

\begin{proof}
Put $\widetilde{Y}_n:=Y_n\1_{\{|Y_n| \leq n(\log n)^{-\theta}\}}$
and $\varepsilon_n:=-\E[e^{-\alpha t_{n+1}} \widetilde{Y}_{n+1} | \H_n]$, $n \in \N$. We first show that
the condition $\E [V_1(\log^+V_1)^\theta]<\infty$ implies
a.s.~convergence of $\sum_{n\geq 2}(\log
n)^\theta(R_{n+1}-R_n+\varepsilon_n)$ on $S$. To this end, it
suffices to check that
    \vspace{-0.2cm}
\begin{itemize}
    \item[(a)] $\sum_{n\geq 2}\Prob(Y_n \neq \widetilde{Y}_n)<\infty$;
    \vspace{-0.2cm}
    \item[(b)] $\sum_{n\geq 2}(\log n)^{2\theta} \Var [e^{-\alpha t_{n+1}}\widetilde{Y}_{n+1}+\varepsilon_n|\H_n]<\infty$ a.s. on $S$.
\end{itemize}
    \vspace{-0.2cm}

\noindent Indeed, (b) implies that $\sum_{n\geq 2}(\log n)^\theta (e^{-\alpha t_{n+1}}\widetilde{Y}_{n+1}+\varepsilon_n)$
converges a.s.~on $S$ because the partial sums of this series
constitute an $\mathcal{L}^2$- martingale. Invoking (a) and the
Borel-Cantelli lemma, we infer a.s.~convergence of
\begin{equation*}
\sum_{n \geq 2}(\log n)^\theta (e^{-\alpha t_{n+1}} Y_{n+1}+\varepsilon_n)
~=~ \sum_{n\geq 2}(\log n)^\theta(R_{n+1}-R_n+\varepsilon_n)
\end{equation*}
on $S$.

For $x\geq e^\theta$, $f_\theta(x):=x(\log x)^{-\theta}$ is strictly increasing and continuous
and hence possesses an inverse function which we denote by $g_\theta(x)$, $x \geq (e/\theta)^\theta$.
Since $g_\theta(x) \sim x(\log x)^\theta$ as $x \to \infty$, we have, for some appropriate $c>0$,
\begin{align*}
\sum_{n \geq 2} & \Prob(Y_n\neq \widetilde{Y}_n)
= \sum_{n \geq 2} \Prob(|Y_1|>n(\log n)^{-\theta})<\infty   \\
& \text{iff}    \quad \sum_{n\geq 2}\Prob(g_\theta(|Y_1|\vee c)>n)<\infty   \\
& \text{iff}    \quad   \E [V_1 (\log^+V_1)^\theta] < \infty.
\end{align*}
This implies (a).
According to Lemma 4.2(v) in p.~372 in
\cite{Asmussen+Hering:1983}, $\sup_{n \geq 1} ne^{-\alpha t_n} < \infty$ a.s. on $S$.\!\footnote{ \label{p:footnote} The
cited lemma says that $e^{-\alpha t_n}\asymp n^{-1}$ as
$n\to\infty$ a.s.~on $S$ under the assumption $m(\beta)<\infty$
for some $\beta<\alpha$. An inspection of the proof reveals
that the latter assumption is only needed for $\inf_{n \geq 1} ne^{-\alpha t_n}>0$ a.s. on $S$
to hold}. With this at hand, we obtain
\begin{eqnarray*}
\Var[e^{-\alpha t_{n+1}}\widetilde{Y}_{n+1}+\varepsilon_n|\H_n] & = &
e^{-2\alpha t_{n+1}} \E [Y_1^2\1_{\{|Y_1|\leq n(\log n)^{-\theta}\}}]   \\
& = &
O (n^{-2} \E [Y_1^2\1_{\{|Y_1|\leq n(\log n)^{-\theta}\}}])
\end{eqnarray*}
as $n\to\infty$ a.s.~on $S$. Hence, for an appropriate $c>0$, a.s.~on $S$,
\begin{align*}
\sum_{n\geq 2} (\log n)^{2\theta} & \Var [e^{-\alpha t_{n+1}}\widetilde{Y}_{n+1}+\varepsilon_n|\H_n] < \infty   \\
& \text{if} \quad
\sum_{n\geq 2}n^{-2}(\log n)^{2\theta}\E [Y_1^2 \1_{\{|Y_1|\leq n(\log n)^{-\theta}\}}] < \infty    \\
& \text{iff}    \quad
\E \bigg[Y_1^2 \sum_{n\geq g_\theta(|Y_1|\vee c)}n^{-2}(\log n)^{2\theta}\bigg] < \infty    \\
& \text{iff}    \quad
\E \bigg[\frac{Y_1^2 (\log g_\theta(|Y_1|\vee c))^{2\theta}}{g_\theta(|Y_1|\vee c)}\bigg] < \infty  \\
& \text{iff}    \quad
\E [|Y_1| (\log^+ |Y_1|)^\theta] < \infty   \\
& \text{iff}    \quad
\E [V_1(\log^+ V_1)^\theta] < \infty.
\end{align*}
This proves (b).

Now a.s.~convergence of the series
$\sum_{n\geq 2}(\log n)^\theta (R_{n+1}-R_n+\varepsilon_n)$ on $S$
together with Lemma \ref{Lem:Abel} for $\alpha_n=R_{n+1}-R_n+\varepsilon_n$ and $\beta_n=(\log n)^\theta$ imply
\begin{equation*}
\lim_{n \to \infty} (\log n)^\theta (V-R_n+\sum_{k\geq n}\varepsilon_k) ~=~ 0   \quad   \text{a.s.~on } S.
\end{equation*}
Thus $\lim_{n \to \infty} (\log n)^\theta (V-R_n)=0$ a.s.~on $S$ is equivalent to
\begin{equation}    \label{eq:equivalent crit}
\lim_{n \to \infty} (\log n)^\theta \sum_{k\geq n}\varepsilon_k ~=~ 0   \quad   \text{a.s.~on } S.
\end{equation}
Here, for sufficiently large $k$, we have $\{|Y_1| \leq k(\log k)^{-\theta}\} = \{Y_1 \leq k(\log k)^{-\theta}\}$.
Hence, for these $k$, using that $\E [Y_1] = 0$ we obtain
\begin{eqnarray*}
\varepsilon_{k-1} & = & -\E[e^{-\alpha t_k}\widetilde{Y}_{k}|\H_{k-1}]
~=~ - e^{-\alpha t_k} \E[Y_1 \1_{\{|Y_1| \leq k(\log k)^{-\theta}\}}]    \\
& = &
- e^{-\alpha t_k} \E[Y_1 \1_{\{Y_1 \leq k(\log k)^{-\theta}\}}]  \\
& = &   e^{-\alpha t_k} \E[Y_1 \1_{\{Y_1 > k(\log
k)^{-\theta}\}}].
\end{eqnarray*}
Further, $\E[Y_1 \1_{\{Y_1 > k(\log k)^{-\theta}\}}] \sim \E[V_1 \1_{\{V_1 > k(\log k)^{-\theta}+1\}}]$ as $k \to \infty$.
Hence, in order to deduce that \eqref{eq:equivalent crit} is equivalent to
\begin{equation*}
\lim_{n \to \infty} (\log n)^\theta\sum_{k \geq n} e^{-\alpha
t_k}\E [V_1\1_{\{V_1>k(\log k)^{-\theta}\}}] ~=~ 0 \quad
\text{a.s.~on } S
\end{equation*}
it remains to check that
\begin{equation*}
\lim_{n \to \infty} (\log n)^\theta \sum_{k\geq n} e^{-\alpha
t_k} \E[V_1 \1_{\{V_1 \in (k(\log k)^{-\theta},\,k(\log
k)^{-\theta}+1]\}}] ~=~ 0  \quad \text{a.s.~on } S.
\end{equation*}
Validity of the latter relation can be seen from $e^{-\alpha
t_k} = O(k^{-1})$ a.s.~on $S$ and the following rough
estimate
\begin{align*}
\limsup_{n \to \infty} & (\log n)^\theta \sum_{k\geq n} \frac{1}{k} \E[V_1 \1_{\{V_1 \in (k(\log k)^{-\theta},k(\log k)^{-\theta}+1]\}}]    \\
& \leq~ 2 \limsup_{n \to \infty} \sum_{k \geq n} \Prob(V_1 \in (k(\log k)^{-\theta},k(\log k)^{-\theta}+1]) \\
& \leq~ 2 \limsup_{n \to \infty} \sum_{k \geq n} \Prob(V_1 > k(\log k)^{-\theta})   \\
& \leq~ 2 \limsup_{n \to \infty} \sum_{k \geq n}
\Prob(g_{\theta}(V_1\vee c) > k) ~=~ 0
\end{align*}
for appropriate $c>0$.
\end{proof}

\begin{Lemma}   \label{Lem:Rate of convergence R_n 2}
Assume that \eqref{eq:T_te^alphat sufficient} holds and let
$\gamma > 0$. Then \eqref{eq:criterion} with $\delta$ replaced by
$\gamma$ is necessary and sufficient for
\begin{equation}    \label{eq:criterion 4}
\lim_{n \to \infty} (\log n)^\gamma \sum_{k \geq n} e^{-\alpha t_{k+1}} \E [V_1\1_{\{V_1>k(\log k)^{-\gamma}\}}] ~=~ 0
\quad \text{a.s.~on } S.
\end{equation}
\end{Lemma}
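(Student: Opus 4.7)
The plan is to strip the randomness from \eqref{eq:criterion 4} by invoking the almost sure equivalence $e^{-\alpha t_k}\asymp k^{-1}$ on the survival set $S$, which is exactly what hypothesis \eqref{eq:T_te^alphat sufficient} secures (via Remark \ref{Rem:T_te^alphat}). Since on $S$ there are a.s.~finite positive random constants $c,C$ with $c/k\leq e^{-\alpha t_{k+1}}\leq C/k$ for all large $k$, \eqref{eq:criterion 4} becomes equivalent to the purely deterministic statement
\[
\lim_{n\to\infty}(\log n)^\gamma\sum_{k\geq n}\frac{1}{k}\E[V_1\1_{\{V_1>k(\log k)^{-\gamma}\}}]=0.
\]

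I would then swap summation with expectation by Fubini. With $g_\gamma$ the inverse of the (eventually increasing) map $u\mapsto u(\log u)^{-\gamma}$, one has $g_\gamma(v)\sim v(\log v)^\gamma$ and $\log g_\gamma(v)=\log v+\gamma\log\log v+O(1)$, and the event $\{V_1>k(\log k)^{-\gamma}\}$ is $\{k<g_\gamma(V_1)\}$ for large $k$. Together with $\sum_{n\leq k<N}k^{-1}=\log N-\log n+O(1/n)$ and $t_n:=n(\log n)^{-\gamma}$, this yields
\[
\sum_{k\geq n}\frac{1}{k}\E[V_1\1_{\{V_1>k(\log k)^{-\gamma}\}}]
=\E[V_1(\log V_1-\log n+\gamma\log\log V_1)\1_{\{V_1>t_n\}}]+O(\E[V_1\1_{\{V_1>t_n\}}]).
\]
Since $\log t_n=\log n-\gamma\log\log n$, the expression in \eqref{eq:criterion} evaluated at $t=t_n$ equals $\E[V_1(\log V_1-\log n+\gamma\log\log n)\1_{\{V_1>t_n\}}]$, so the two differ only by $\gamma\E[V_1(\log\log V_1-\log\log n)\1_{\{V_1>t_n\}}]$ plus the $O(\E[V_1\1_{\{V_1>t_n\}}])$ error above.

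Both correction terms must be shown to vanish after multiplication by $(\log n)^\gamma$ under either form of the hypothesis. For the $O$-term, the elementary bound $\E[V_1\1_{\{V_1>es\}}]\leq\E[V_1(\log V_1-\log s)\1_{\{V_1>s\}}]$ promotes \eqref{eq:criterion} (with $\delta$ replaced by $\gamma$) to $\E[V_1\1_{\{V_1>s\}}]=o((\log s)^{-\gamma})$. For the $\log\log$-term, concavity of $\log$ gives $\log\log V_1-\log\log n\leq(\log V_1-\log n)/\log n$ on $\{V_1>n\}$, so the correction is dominated by $(\log n)^{-1}\E[V_1(\log V_1-\log t_n)\1_{\{V_1>t_n\}}]$, of strictly smaller order than what \eqref{eq:criterion} controls. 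The same estimates run in reverse give the converse implication. A final monotonicity argument, based on $t\mapsto\E[V_1(\log V_1-\log t)\1_{\{V_1>t\}}]$ being decreasing and $(\log t_{n+1})^\gamma/(\log t_n)^\gamma\to 1$, upgrades convergence along the subsequence $(t_n)$ to convergence along all $t\to\infty$.

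The main obstacle is the careful bookkeeping of the $\log\log$-corrections connecting $\log t_n=\log n-\gamma\log\log n$ with $\log g_\gamma(V_1)=\log V_1+\gamma\log\log V_1+O(1)$; once this is in place, the rest reduces to Fubini, the Abel-type partial-sum identity, and routine monotone bounds.
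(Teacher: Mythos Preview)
Your approach is essentially the paper's: both remove the randomness via $e^{-\alpha t_k}\asymp k^{-1}$, swap by Tonelli, replace the harmonic tail by $\log g_\gamma(V_1)-\log n+O(1)$, and then absorb the $\log\log$-corrections. The paper carries out the last step by substituting $g_\gamma(n)$ for $n$ and invoking the uniform estimate $\log\log u-\log\log v=o(\log u-\log v)$ as $u,v\to\infty$; your concavity bound is simply a quantitative form of this same estimate.

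One bookkeeping point deserves care. You state the bound $\log\log V_1-\log\log n\le(\log V_1-\log n)/\log n$ on $\{V_1>n\}$, but the relevant indicator is $\1_{\{V_1>t_n\}}$ with $t_n=n(\log n)^{-\gamma}<n$, and you need a two-sided estimate to control the correction term. On the strip $\{t_n<V_1\le n\}$ one has $|\log\log V_1-\log\log n|\le \log\log n-\log\log t_n=O(\log\log n/\log n)$, which adds a harmless term $O\big((\log\log n/\log n)\,\E[V_1\1_{\{V_1>t_n\}}]\big)$ not mentioned in your sketch. For the converse direction you also need that \eqref{eq:criterion 4} by itself forces $\E[V_1\1_{\{V_1>s\}}]=o((\log s)^{-\gamma})$, so that the $O$-error is controlled without presupposing \eqref{eq:criterion}; this follows from the crude lower bound $\sum_{n\le k\le 2n}k^{-1}\E[V_1\1_{\{V_1>k(\log k)^{-\gamma}\}}]\ge c\,\E[V_1\1_{\{V_1>2t_n\}}]$. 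These are details the paper's proof also glosses over, so with these small patches your argument is sound.
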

\begin{proof}
In view of \eqref{eq:T_te^alphat2},
\begin{equation}    \label{eq:criterion 5}
\lim_{n \to \infty} (\log n)^\gamma \sum_{k\geq n} \frac{1}{k} \E [V_1\1_{\{V_1>k(\log k)^{-\gamma}\}}] ~=~ 0 \quad \text{a.s.~on } S
\end{equation}
is necessary and sufficient for \eqref{eq:criterion 4} to hold.
For large enough $n$ and appropriate $c>0$
\begin{equation*}
\sum_{k \geq n} \frac{1}{k} \E [V_1\1_{\{V_1>k(\log k)^{-\gamma}\}}]
~=~ \E \bigg[V_1\sum_{k\geq n} \frac{1}{k} \1_{\{g_\gamma(V_1\vee c)>k\}}\bigg].
\end{equation*}
Since $\sum_{k=1}^n k^{-1} = \log n + O(1)$ as $n \to \infty$,
\eqref{eq:criterion 5} is equivalent to
\begin{equation*}
\lim_{n \to \infty} (\log n)^\gamma \, \E [V_1(\log g_\gamma(V_1\vee c)-\log n)\1_{\{g_\gamma(V_1\vee c)>n\}}]  ~=~ 0
\end{equation*}
and hence to
$\lim_{n \to \infty} (\log n)^\gamma \, \E [V_1(\log g_\gamma(V_1)-\log g_\gamma(n))\1_{\{V_1>n\}}] = 0$
on substituting $g_\gamma(n)$ instead of $n$ and then simplifying.
Finally, the latter relation is equivalent to
\begin{equation*}
\lim_{t \to \infty} (\log t)^\gamma \, \E [V_1(\log g_\gamma(V_1)-\log g_\gamma(t))\1_{\{V_1>t\}}] = 0
\end{equation*}
by a monotonicity argument. It remains to note that, as $t \to
\infty$, $\log g_\gamma(t) = \log t + \gamma \log (\log t) + o(1)$
and that $\log(\log u) - \log (\log v) = o(\log u-\log v)$ as $u,v \to
\infty$ to complete the proof.
\end{proof}

\begin{proof}[Proof of Proposition \ref{Prop:rate of convergence of Nerman's martingale}]
We first prove that
\begin{equation}    \label{eq:implication}
\E [V_1(\log^+V_1)^{1+\delta}] < \infty
\quad \! \Rightarrow \! \quad
\eqref{eq:criterion}
\quad \! \Rightarrow \! \quad
\E [V_1(\log^+V_1)^{1+\delta-\varepsilon}] < \infty
\end{equation}
for any $\varepsilon \in (0,1+\delta)$.
In particular, the first implication justifies the last statement of the proposition.

Suppose $\E [V_1(\log^+V_1)^{1+\delta}] < \infty$.
Then
\begin{align*}
\lim_{t \to \infty} (\log t)^\delta \E[V_1(\log V_1-\log t) \1_{\{V_1>t\}}]
\leq    \lim_{t \to \infty} \E[V_1(\log V_1)^{1+\delta} \1_{\{V_1>t\}}] =   0,
\end{align*}
that is, \eqref{eq:criterion} holds.

Suppose \eqref{eq:criterion} and let $\varepsilon \in (0,\delta)$.
Then
\begin{equation*}
t^{-1} (\log t)^{\delta-\varepsilon-1} \E [V_1(\log V_1-\log t)\1_{\{V_1>t\}}] ~\leq~ {\rm const}\,t^{-1}(\log t)^{-\varepsilon-1}
\end{equation*}
for large enough $t$ whence
\begin{eqnarray*}
\infty
& > &
(\delta-\varepsilon)\int_1^\infty t^{-1}(\log t)^{\delta-\varepsilon-1} \E [V_1(\log V_1-\log t) \1_{\{V_1>t\}}] \, \dt \\
& = &
\E \bigg[V_1\log V_1\int_1^{V_1}(\delta-\varepsilon)t^{-1}(\log t)^{\delta-\varepsilon-1}{\rm d}t\1_{\{V_1>1\}} \bigg]  \\
& &
-\E \bigg[V_1\int_1^{V_1}(\delta-\varepsilon)t^{-1}(\log t)^{\delta-\varepsilon} \dt \1_{\{V_1>1\}}\bigg]   \\
& = &
(1+\delta-\varepsilon)^{-1}\E [V_1(\log^+ V_1)^{1+\delta-\varepsilon}]
\end{eqnarray*}
which completes the proof of \eqref{eq:implication}.

$\lim_{t \to \infty} t^\delta |V(t)-V| = 0$ a.s.~on $S^{\comp}$
holds trivially. In view of \eqref{eq:T_te^alphat2},
for $\lim_{t \to \infty} t^\delta |V(t)-V|=0$ a.s.~on $S$ to hold it
is necessary and sufficient that
\begin{equation}    \label{eq:Rate of V-R_n}
\lim_{n \to \infty} (\log n)^\delta (V-R_n) = 0 \quad\text{a.s.~on } S.
\end{equation}
Therefore we work towards proving that condition \eqref{eq:criterion} is equivalent to \eqref{eq:Rate of V-R_n}.
While doing so we argue as in the proof of Theorem 4.1(ii) in p.~36 in \cite{Asmussen+Hering:1983}.

\noindent {\sc Case 1: $\delta\in (0,1]$}.
The result follows from Lemma \ref{Lem:Rate of convergence R_n} with
$\gamma=1$ and $\theta=\delta$ which is applicable because $\E [V_1\log^+V_1] < \infty$
by the assumption and Lemma \ref{Lem:Rate of convergence R_n 2} with $\gamma=\delta$.

\noindent {\sc Case 2: $\delta>1$}.
First assume that \eqref{eq:criterion} holds.
Then $\E [V_1(\log^+V_1)^\delta] < \infty$ by \eqref{eq:implication}.
The result now follows from Lemmas \ref{Lem:Rate of convergence R_n} and \ref{Lem:Rate of convergence R_n 2} with $\gamma=\theta=\delta$.

Now assume that \eqref{eq:criterion} fails.
If $\E [V_1(\log^+V_1)^\delta]<\infty$,
then the result follows from Lemmas \ref{Lem:Rate of convergence R_n} and \ref{Lem:Rate of convergence R_n 2} with $\gamma=\theta=\delta$.
Suppose $\E [V_1(\log^+V_1)^\delta] = \infty$.
We have to prove that the relation $\lim_{n \to \infty} (\log n)^\delta(V-R_n) =0$ a.s.~on $S$ fails to hold.
Pick $\gamma \in [1,\delta)$ such that $\E [V_1(\log^+V_1)^\gamma]<\infty$,
yet $\E [V_1(\log^+V_1)^{\gamma+1/2}] = \infty$.
Using \eqref{eq:implication} with $\delta$ replaced by $\gamma$ we infer that
$\lim_{t \to \infty} (\log t)^\gamma\E [V_1(\log V_1-\log t)\1_{\{V_1>t\}}] = 0$ does not hold.
According to Lemma \ref{Lem:Rate of convergence R_n} and Lemma \ref{Lem:Rate of convergence R_n 2}
the relation $\lim_{n \to \infty} (\log n)^\gamma(V-R_n) =0$ a.s.~on $S$ does not hold.
This finishes this part of the proof, for $\gamma<\delta$.
\end{proof}

\subsection{Reduction to the single-type case}  \label{subsec:reduction to single-type}

Recall that $\Prob$, $\E$, $\sigma$ and $\J$ are shorthand notation for $\Prob^1$, $\E^1$, $\sigma^1$ and $\J^1$, respectively.
\begin{Prop}    \label{Prop:J inherits2}
Assume that (A1)--(A4) hold.
Then
\begin{itemize}
	\item[(a)]	If (A5) holds, then $V = \oneW$ $\Prob$-a.s.~and $\E [V_1 \log^+ V_1] < \infty$.
	\item[(b)]	If Condition \ref{Con:Ratio convergence mu} holds, then $m(\beta)<\infty$ for some $\beta < \alpha$.
	\item[(c)]	If Condition \ref{Con:SLLN mu} holds, then
				\begin{equation*}
				\E \bigg[\sum_{x \in \J} e^{-\alpha S(x)} g(S(x)) \bigg] < \infty
				\end{equation*}
				where $g$ is the function from Condition \ref{Con:SLLN mu}.
    \item[(d)]	If, for some $\delta > 0$, \eqref{eq:ES_1^(1+delta)<infty} holds, \textit{i.e.}, if
            	    	$\E^i \Big[\sum_{|x|=1} e^{-\alpha S(x)} S(x)^{1+\delta}\Big] < \infty$ for $i=1,\ldots,p$,
           		     	then
            	    	\begin{equation*}
              	  	\E \bigg[\sum_{x \in \J} e^{-\alpha S(x)} S(x)^{1+\delta}\bigg] < \infty.
            	    	\end{equation*}
    \item[(e)]	If, for some $\delta > 0$, Condition \ref{Con:Rate of convergence Z} holds,
              	  	then $\E [V_1 (\log^+ V_1)^{1+\delta}] < \infty$.
\end{itemize}
\end{Prop}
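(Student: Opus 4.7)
For part (a), the crux is to identify $V_n$ with $\E[\oneW\mid\F_{\J_n}]$, where $\F_{\J_n}$ denotes the $\sigma$-algebra generated by the process up to the optional line $\J_n$. Because $\J_n$ consists exclusively of type-$1$ individuals, Jagers' strong Markov property along optional lines (\cite[Theorem 4.14]{Jagers:1989}), together with the recurrence of $(M_n)_{n\geq0}$ (ensuring $\J_n$ is a.s.~crossed by every infinite ray) and the $\L^1$-convergence $\oneW_N\to\oneW$ granted by Proposition~\ref{Prop:multi-type martingale convergence} under (A5), produces the almost-sure decomposition $\oneW=\sum_{x\in\J_n}e^{-\alpha S(x)}[\oneW]_x$ with $[\oneW]_x$ conditionally i.i.d.~copies of $\oneW$ given $\F_{\J_n}$. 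Using $\E[\oneW]=1$, taking conditional expectation yields $V_n=\E[\oneW\mid\F_{\J_n}]$; this is a UI martingale converging to $\oneW$, and combined with $V_n\to V$ a.s.\ gives $V=\oneW$. The moment statement $\E[V_1\log^+V_1]<\infty$ is then obtained by applying the single-type case ($p=1$) of Proposition~\ref{Prop:multi-type martingale convergence} to the embedded process---for which (A1)--(A4) hold by Proposition~\ref{Prop:J inherits1}---since the $\L^1$-convergence of $V_n$ is equivalent to the claimed moment bound.

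For part (b), \eqref{eq:sigma<->J general} with $f(j,s)=e^{(\alpha-\beta)s}$ gives $m(\beta)=\E[e^{(\alpha-\beta)S_\sigma}]$, which expands as a matrix-geometric sum over first-return times governed by the Perron-Frobenius eigenvalue $\bar\rho(\beta)$ of $\bar\bfM(\beta)$, the $(p-1)\times(p-1)$ matrix obtained from $\bfM(\beta)$ by deleting the type-$1$ row and column. Since $m(\alpha)=1$ (Proposition~\ref{Prop:J inherits1}(b)), irreducibility gives $\bar\rho(\alpha)<1$; and Condition~\ref{Con:Ratio convergence mu} makes $\bfM(\beta)$ have finite entries on $[\theta,\alpha]$, so by continuity $\bar\rho(\beta)<1$ in a left neighborhood of $\alpha$, which delivers $m(\beta)<\infty$ for some $\beta<\alpha$.

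Parts (c) and (d) follow via \eqref{eq:sigma<->J general} applied with $f(j,s)=g(s)$ and $f(j,s)=s^{1+\delta}$ respectively, reducing the claims to $\E[g(S_\sigma)]<\infty$ and $\E[S_\sigma^{1+\delta}]<\infty$. In each case the hypothesis combined with \eqref{eq:MRW transition} yields uniform increment moment bounds $\max_j\E^j[g(S_1-S_0)]<\infty$ and $\max_j\E^j[(S_1-S_0)^{1+\delta}]<\infty$. For (d), the inequality $(\sum_{k=1}^n a_k)^{1+\delta}\leq n^{\delta}\sum a_k^{1+\delta}$ together with conditioning on $(M_k)$ and $\sigma$ give $\E[S_\sigma^{1+\delta}]\leq\max_j\E^j[(S_1-S_0)^{1+\delta}]\cdot\E[\sigma^{1+\delta}]<\infty$, using the exponential moment of $\sigma$ from Lemma~\ref{Lem:M}(c). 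Part (c) is the main technical obstacle because $g(x)=x\log^{1+\varepsilon}(1+x)$ is not subadditive; I would exploit $\log(1+S_\sigma)\leq\log(1+\sigma)+\log(1+\max_k\Delta_k)$ with $\Delta_k:=S_k-S_{k-1}$ to obtain
\begin{equation*}
g(S_\sigma)~\leq~2^\varepsilon\,\sigma\max_k\Delta_k\,\bigl(\log^{1+\varepsilon}(1+\sigma)+\log^{1+\varepsilon}(1+\max_k\Delta_k)\bigr),
\end{equation*}
then bound the first summand by $\max_j\E^j[\Delta_1]\cdot\E[\sigma^2\log^{1+\varepsilon}(1+\sigma)]<\infty$ using $\E[\max_k\Delta_k\mid(M_k),\sigma]\leq\sigma\max_j\E^j[\Delta_1]$, and the second by $\max_j\E^j[g(\Delta_1)]\cdot\E[\sigma^2]<\infty$ via the key observation $g(\max_k\Delta_k)\leq\sum_k g(\Delta_k)$.

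For part (e), an elementary inequality such as $(a+b)\log^+(a+b)\leq 2(a\log^+a+b\log^+b)+(a+b)\log2$ applied to $\iWone=\sum_j(v_j/v_i)\sum_{|x|=1,\tau(x)=j}e^{-\alpha S(x)}$ shows that Condition~\ref{Con:Rate of convergence Z} implies (A5), so by part (a) we have $V=\oneW$ $\Prob$-a.s. Proposition~\ref{Prop:martingale moments} applied with $b(y)=y^\delta$, whose sufficient condition is exactly Condition~\ref{Con:Rate of convergence Z}, then yields $\E[\oneW(\log^+\oneW)^\delta]<\infty$, i.e., $\E[V(\log^+V)^\delta]<\infty$. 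Since the embedded process is a single-type supercritical BRW satisfying the $p=1$ versions of (A1)--(A4) (Proposition~\ref{Prop:J inherits1}), the converse direction of \cite[Theorem 1.4]{Alsmeyer+Iksanov:2009}---available in the single-type case---applied with the same $b$ promotes this to $\E[V_1(\log^+V_1)^{1+\delta}]<\infty$.
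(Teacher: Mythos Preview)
Your proof is correct throughout, and for parts (a), (d), and (e) it is essentially the same as the paper's: the identification $V=\oneW$ via the optional-line decomposition and the single-type Biggins criterion for (a); an $\ell^{1+\delta}$-type inequality plus conditioning on the driving chain and exponential tails of $\sigma$ for (d); and the chain Condition~\ref{Con:Rate of convergence Z} $\Rightarrow$ Proposition~\ref{Prop:martingale moments} $\Rightarrow \E[V(\log^+V)^\delta]<\infty \Rightarrow \E[V_1(\log^+V_1)^{1+\delta}]<\infty$ via the single-type converse from \cite{Alsmeyer+Iksanov:2009} for (e).

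Parts (b) and (c) take genuinely different routes. For (b), the paper writes $m(\beta)=\E[e^{(\alpha-\beta)S_\sigma}]=\sum_n\E[\1_{\{\sigma=n\}}e^{(\alpha-\beta)S_n}]$ and controls each term by Cauchy--Schwarz, pairing the geometric tail $\Prob(\sigma=n)\leq C^2e^{-\gamma n}$ from Lemma~\ref{Lem:M}(c) with the bound $\E[e^{2(\alpha-\beta)S_n}]\leq c(2(\alpha-\beta))^n$, and then choosing $\beta$ close enough to $\alpha$ that $c(2(\alpha-\beta))<e^\gamma$. Your spectral argument via the sub-Perron-Frobenius eigenvalue $\bar\rho(\beta)$ of the $(p-1)\times(p-1)$ block is equally valid and arguably more structural; it avoids Cauchy--Schwarz but relies on the (standard) fact that a proper principal submatrix of an irreducible nonnegative matrix has strictly smaller spectral radius, together with continuity of eigenvalues. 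For (c), the paper packages the estimate into an auxiliary lemma (Lemma~\ref{Lem:moments MRP} with Remark~\ref{Rem:function h}): it observes that $g(x)=x\log^{1+\varepsilon}(1+x)$ is asymptotically equivalent to $xf(x)$ for a concave (hence subadditive) $f$, dominates $S_n$ by an i.i.d.\ sum $T_n$, and obtains $\E[h(T_n)]\leq n^2\E[h(X_1)]$ directly from subadditivity. Your splitting $\log(1+S_\sigma)\leq\log(1+\sigma)+\log(1+\max_k\Delta_k)$ and the two ensuing estimates conditioned on $(M_k)$ achieve the same $\sigma^2$-type bound by hand; both approaches ultimately rest on $\E[\sigma^2]<\infty$. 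The paper's lemma is slightly more reusable (it also yields $\E[\sum_{k<\sigma}h(S_k)]<\infty$, used later in Lemma~\ref{Lem:phi_J inherits}(c)), while your argument is more self-contained.
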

\begin{proof}
(a) If (A5) holds, then $\E [\oneW] = 1$ by Proposition \ref{Prop:multi-type martingale convergence}.
We only need to prove that $V=\oneW$ $\Prob$-a.s.~because then $\E[V]=1$ and hence,
by Proposition \ref{Prop:multi-type martingale convergence} (for $p=1$), $\E [V_1 \log^+ V_1] < \infty$.
But $V = \oneW$ $\Prob$-a.s.~follows from \cite[Theorem 6.1]{Biggins+Kyprianou:2004}
if we can check that
\begin{equation}    \label{eq:check Biggins-Kyprianou}
\oneW_n ~=~ \lim_{k \to \infty} \sum_{x \in \J_k \wedge n} \frac{v_{\tau(x)}}{v_1} e^{-\alpha S(x)} \quad   \Prob\text{-a.s.}
\end{equation}
where $\J_k=\J_k^1$, and $\sum_{x \in \J_k \wedge n}$ means summation over the set
\begin{equation*}
\J_k \wedge n   ~:=~    \{x \in \G: \text{either } x \in \J_k
\text{ and } |x| \leq n \text{ or } x \prec \J_k \text{ and } |x|=n \}.
\end{equation*}
(In words, these are the $x$ in $\J_k$ in the first $n$ generations and the $x$ in the $n$th generation with no ancestor in $\J_k$.)
Relation \eqref{eq:check Biggins-Kyprianou} holds true
since by definition of $\J_k$, we have $|x| \geq k$ for all $x \in \J_k$ and, therefore, $\J_k \wedge n = \G_n$ for $k \geq n$.
The proof of assertion (a) is complete.

For the proof of (b), assume that Condition \ref{Con:Ratio convergence mu} holds,
that is, $\im_j(\theta)<\infty$ for all $i,j=1,\ldots,p$ and some $\theta < \alpha$.
This implies that
\begin{equation*}
c(\varepsilon) := \max_{i=1,\ldots,p} \E^i[e^{\varepsilon S_1}] < \infty
\end{equation*}
for $0 \leq \varepsilon \leq \alpha - \theta$. Further,
$c(\varepsilon) \to 1$ as $\varepsilon \downarrow 0$. By
Lemma \ref{Lem:M}(c), $\Prob(\sigma=n) \leq C^2 e^{-\gamma n}$ for
all $n \geq 0$ and some $C, \gamma > 0$.
Now pick $\beta\in[\theta,\alpha)$ such that $c(2(\alpha-\beta)) <
e^{\gamma}$. By \eqref{eq:sigma<->J general}, $m(\beta)<\infty$ is
equivalent to $\E[e^{(\alpha-\beta)S_{\sigma}}] < \infty$. For the
latter expectation, we obtain using the Cauchy-Schwarz inequality,
\begin{eqnarray*}
\E [e^{(\alpha-\beta)S_{\sigma}}]
& = &
\sum_{n \geq 0} \E \big[ \1_{\{\sigma=n\}} e^{(\alpha-\beta)S_n}\big]
~\leq~  \sum_{n \geq 0} \Prob(\sigma=n)^{1/2} \big(\E [e^{2(\alpha-\beta)S_n}]\big)^{1/2}   \\
& \leq &
C \sum_{n \geq 0} e^{-\gamma n/2} c(2(\alpha-\beta))^{n/2}  ~<~ \infty.
\end{eqnarray*}

For the proof of (c), notice that by \eqref{eq:M_n,S_n}, for $i=1,\ldots,p$,
\begin{equation*}
\E^i [g(S_1)]   ~=~ \E^i \bigg[\sum_{|x|=1} \frac{v_{\tau(x)}}{v_i} e^{-\alpha S(x)} g(S(x))\bigg]  ~<~ \infty
\end{equation*}
where the finiteness is a consequence 
of Condition \ref{Con:SLLN mu}. Lemma \ref{Lem:moments MRP} (\textit{cf.}~Remark \ref{Rem:function h})
thus yields $\E[g(S_{\sigma})] < \infty$.
Therefore, by \eqref{eq:sigma<->J general}
\begin{equation*}
\E \bigg[\sum_{x \in \J} e^{-\alpha S(x)} g(S(x)) \bigg]    ~=~  \E[g(S_{\sigma})]  ~<~ \infty.
\end{equation*}

For the proof of (d), assume that \eqref{eq:ES_1^(1+delta)<infty} holds.
Then \eqref{eq:M_n,S_n} yields
\begin{equation*}
C ~:=~ \max_{i,j=1,\ldots,p} \E^i[S_1^{1+\delta} | M_1 = j] ~<~ \infty
\end{equation*}
where the maximum is over those $i,j$ only with $\Prob^i(M_1=j)>0$.
Fix $i_1,\ldots,i_{n-1} \in \{1,\ldots,p\}$ with
$\Prob(M_1=i_1,\ldots,M_{n-1}=i_{n-1},M_n=1) > 0$ and observe that,
by Minkowski's inequality,
\begin{eqnarray*}
\E [S_n^{1+\delta} \1_{\{M_1 = i_1,\ldots,M_{n-1}=i_{n-1}, M_n = 1\}}]	
& = &
\bigg\|\sum_{k=1}^n (S_k-S_{k-1}) \1_{\{M_1 = i_1,\ldots,M_{n-1}=i_{n-1}, M_n = 1\}} \bigg\|_{1+\delta}^{1+\delta}		\\
&\leq &
\bigg(\sum_{k=1}^n \big\| (S_k-S_{k-1}) \1_{\{M_1 = i_1,\ldots,M_{n-1}=i_{n-1}, M_n = 1\}} \big\|_{1+\delta} \bigg)^{\!\!\!1+\delta}.		\\
\end{eqnarray*}
Conditioning with respect to $(M_n)_{n \geq 0}$ yields
\begin{equation*}
\E[(S_k-S_{k-1})^{1+\delta} \1_{\{M_1 = i_1,\ldots,M_{n-1}=i_{n-1}, M_n = 1\}}]
~\leq~
C \Prob(M_1 = i_1,\ldots,M_{n-1}=i_{n-1}, M_n = 1)
\end{equation*}
for $k=1,\ldots,n$.
Hence, using that $\{\sigma=n\} = \bigcup_{2 \leq i_1,\ldots,i_{n-1} \leq p} \{M_1 = i_1,\ldots,M_{n-1}=i_{n-1}, M_n = 1\}$,
\eqref{eq:sigma<->J general} and Lemma \ref{Lem:M}(c), we infer
\begin{align*}
\E & \bigg[\sum_{x \in \J} e^{-\alpha S(x)} S(x)^{1+\delta}\bigg]
~=~ \E [S_{\sigma}^{1+\delta}]  \\
&=~ \sum_{n \geq 1} \sum_{2 \leq i_1,\ldots,i_{n-1} \leq p} \E [S_n^{1+\delta} \1_{\{M_1 = i_1,\ldots,M_{n-1}=i_{n-1}, M_n = 1\}}]  \\
&\leq~ \sum_{n \geq 1} \sum_{2 \leq i_1,\ldots,i_{n-1} \leq p}
n^{1+\delta} C \Prob(M_1 = i_1,\ldots,M_{n-1}=i_{n-1}, M_n = 1)	\\
&=~ C \sum_{n \geq 1} n^{1+\delta} \Prob(\sigma=n)   ~<~ \infty.
\end{align*}

Finally, for the proof of (e), assume that, for some $\delta > 0$,
Condition \ref{Con:Rate of convergence Z} is valid.
Proposition \ref{Prop:martingale moments} then implies that $\E [\oneW (\log^+ \oneW)^{\delta}] < \infty$
and thereupon $\E [V (\log^+ V)^{\delta}] < \infty$
because $V=\oneW$ $\Prob$-a.s.~by part (a).
Hence $\E [V_1 (\log^+ V_1)^{\delta+1}] < \infty$ by Theorem 1.2 in \cite{Alsmeyer+Iksanov:2009}.
\end{proof}

\begin{Lemma}   \label{Lem:phi_J inherits}
For $\phi: \Omega \times \R \to [0,\infty)$ a
product-measurable, separable random characteristic, define, for
$t \in \R$,
\begin{equation}    \label{eq:phi_J}
\phi_{\J}(t) ~:=~   \sum_{x \prec \J} [\phi]_x(t-S(x)).
\end{equation}
The following assertions hold.
\begin{itemize}
\item[(a)] Suppose that $\E^i \! \big[\sup_{0 \leq s \leq t} \phi(s) \big] < \infty$ for $i=1,\ldots,p$ and all $t \geq 0$.
Then $\phi_{\J}$ is product-measurable, $\sup_{0 \leq s \leq t}
\phi_{\J}(s) \leq Y_t$ for random variables $Y_t$, $t \geq 0$ with
$\E [Y_t] < \infty$. Further, if $\phi$ has $D$-valued paths, so has $\phi_{\J}$.
\item[(b)] Suppose that $\E^i \! \big[\sup_{0 \leq s \leq t} \phi(s) \big] < \infty$ for all $t\geq 0$
and that $t \mapsto e^{-\alpha t} \E^i[\phi(t)]$ is directly Riemann integrable on $\R_{\geq 0}$, $i=1,\ldots,p$.
Then $t \mapsto e^{-\alpha t} \E[\phi_{\J}(t)]$ is directly Riemann
integrable on $\R_{\geq 0}$.
\item[(c)] If Conditions \ref{Con:SLLN mu} and \ref{Con:SLLN phi} hold with $g=h$, then
\begin{equation*}
\sup_{t \geq 0} \, (h(t) \vee 1) e^{-\alpha t} \phi_{\J}(t)
\end{equation*}
has finite expectation with respect to $\Prob$.
\item[(d)] Assume that Condition \ref{Con:Ratio convergence mu} is satisfied.
If Condition \ref{Con:Ratio convergence phi} holds for $\phi$,
then there exists a $\beta < \alpha$ such that $\E[\sup_{t \geq 0}
e^{-\beta t} \phi_{\J}(t)] < \infty$.
\item[(e)] If Condition \ref{Con:Rate of convergence phi}
holds for $\phi$, then it also holds for $\phi_{\J}$.
\end{itemize}
\end{Lemma}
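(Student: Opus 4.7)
The unifying device is identity \eqref{eq:sigma<->prec J general}, which rewrites $\E[\sum_{x \prec \J} e^{-\alpha S(x)}(v_{\tau(x)}/v_1) f(\tau(x), S(x))]$ as $\E[\sum_{k=0}^{\sigma-1} f(M_k, S_k)]$, combined with the exponential moments of $\sigma$ from Lemma \ref{Lem:M}(c). Write $\Psi_t := \sup_{0 \le s \le t}\phi(s)$, $\chi_i(s) := \E^i[\phi(s)]$, $\tilde\chi_i(s) := e^{-\alpha s}\chi_i(s)$ and $\bar c := v_1/\min_j v_j$. For (a), product measurability follows because $\phi_\J$ is a countable sum of shifts of a product-measurable map. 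Since $[\phi]_x(\cdot - S(x))$ is supported on $[S(x), \infty)$, one has $\sup_{0 \le s \le t}\phi_\J(s) \le Y_t := \sum_{x \prec \J,\, S(x) \le t}[\Psi_t]_x$. The branching property lets me replace $[\Psi_t]_x$ by $\E^{\tau(x)}[\Psi_t] \le C_t$, and applying \eqref{eq:sigma<->prec J general} with $f(j, s) := \E^j[\Psi_t]\1_{\{s \le t\}}e^{\alpha s}v_1/v_j$ gives $\E[Y_t] \le \bar c\, C_t\, e^{\alpha t}\,\E[\sigma] < \infty$. Almost sure finiteness of $Y_t$ at every $t$ makes the sum defining $\phi_\J$ locally finite, and the class $D$ is closed under locally finite sums of nonnegative $D$-valued shifts.

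For (b), the same identity applied with $f(j, s) := \tilde\chi_j(t-s)v_1/v_j$ yields
\begin{equation*}
e^{-\alpha t}\,\E[\phi_\J(t)] ~=~ \E\bigg[\sum_{k=0}^{\sigma-1}\frac{v_1}{v_{M_k}}\,\tilde\chi_{M_k}(t-S_k)\bigg].
\end{equation*}
Each $\tilde\chi_i$ is bounded and DRI by hypothesis. Direct Riemann integrability of the sum is then obtained by truncating at $\{k \le N\}$, where stability of DRI under nonnegative shifts and finite sums applies, and controlling the remainder uniformly in $t$ by $\max_j\|\tilde\chi_j\|_\infty \cdot \bar c \cdot \E[\sigma\1_{\{\sigma > N\}}]$, which tends to $0$ as $N \to \infty$ by Lemma \ref{Lem:M}(c).

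For (c) and (d), the pointwise bound relies on the inequality $h(a+b) \le C_0(h(a)+h(b))$, valid for $h$ regularly varying of index $1$, to split, for $t \ge S(x)$,
\begin{equation*}
(h(t)\vee 1)e^{-\alpha t}[\phi]_x(t-S(x)) \le C_0 e^{-\alpha S(x)}\bigl\{[(h\vee 1)e^{-\alpha \cdot}\phi]_x(t-S(x)) + h(S(x))[e^{-\alpha \cdot}\phi]_x(t-S(x))\bigr\}.
\end{equation*}
Taking the supremum in $t$, summing over $x \prec \J$, and applying \eqref{eq:sigma<->prec J general} reduces (c) to finiteness of $\E[\sum_{k<\sigma}\E^{M_k}[\sup_s(h(s)\vee 1)e^{-\alpha s}\phi(s)]]$, ensured by Condition \ref{Con:SLLN phi} together with $\E[\sigma]<\infty$, and of $\E[\sum_{k<\sigma} h(S_k)\,\E^{M_k}[\sup_s e^{-\alpha s}\phi(s)]]$, ensured by the bound $h(S_k) \le h(S_\sigma)$ and $\E[h(S_\sigma)]<\infty$, which follows from Condition \ref{Con:SLLN mu} by the same Markov-renewal computation as in Proposition \ref{Prop:J inherits2}(c). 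For (d), the analogous reduction via $\sup_{t\ge 0}e^{-\beta t}\phi_\J(t) \le \sum_{x \prec \J}e^{-\beta S(x)}[\sup_s e^{-\beta s}\phi(s)]_x$, followed by \eqref{eq:sigma<->prec J general} with the weight $f(j,s) = e^{-\beta s}\E^j[\sup_{s'} e^{-\beta s'}\phi(s')]v_1/v_j$, leaves one to bound $\E[\sigma\, e^{(\alpha-\beta)S_\sigma}]$ for some $\beta \in [\theta,\alpha)$; this is handled by Cauchy--Schwarz together with Lemma \ref{Lem:M}(c) and the exponential-moment estimate $m(2\beta-\alpha)<\infty$ obtainable from Condition \ref{Con:Ratio convergence mu} as in the proof of Proposition \ref{Prop:J inherits2}(b), choosing $\beta$ close enough to $\alpha$.

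For (e), boundedness and Lebesgue integrability of $t \mapsto e^{-\alpha t}\E[\phi_\J(t)]$ are immediate from the representation derived in (b) and $\E[\sigma]<\infty$. The tail conditions in \eqref{eq:Rate of convergence phi} for $\phi_\J$ are verified from
\begin{equation*}
t^\delta \int_t^\infty e^{-\alpha s}\,\E[\phi_\J(s)]\,\ds ~=~ \E\bigg[\sum_{k=0}^{\sigma-1}\frac{v_1}{v_{M_k}}\,t^\delta\!\int_{t-S_k}^\infty\tilde\chi_{M_k}(u)\,\du\bigg]
\end{equation*}
by dominated convergence: on $\{k < \sigma\}$, $t - S_k \to \infty$ a.s. and $t^\delta \le 2^\delta(t-S_k)^\delta$ eventually, so the hypothesis on $\phi$ forces each summand to vanish pointwise, while a dominating function is supplied by a constant multiple of $\sigma$, which is integrable; the supremum version is identical in structure. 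The main technical obstacle is the uniform-in-$t$ tail control in (b), where the exponential moment of $\sigma$ is indispensable to push DRI through the infinite Markov-renewal sum; the remaining parts are routine bookkeeping along the same reduction pattern.
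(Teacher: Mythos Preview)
Your overall strategy---reducing every part to the Markov-renewal representation via \eqref{eq:sigma<->prec J general}---is exactly the paper's, and parts (a) and (d) are in substance the same (your route in (d) through $\E[\sigma\,e^{(\alpha-\beta)S_\sigma}]$ is a legitimate variant of the paper's term-by-term Cauchy--Schwarz). Two steps, however, do not go through as written.

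\medskip
\noindent\textbf{Part (b).} A uniform-in-$t$ bound on the remainder is \emph{not} sufficient to conclude direct Riemann integrability of the limit: if $f_N$ is DRI and $\|f-f_N\|_\infty\to 0$, $f$ need not be DRI (take $f(t)=(1+t)^{-1}$, $f_N=f\,\1_{[0,N]}$). What you would need is that the \emph{upper Riemann sum} of the remainder tends to $0$, and this requires the DRI of the $\tilde\chi_j$, not merely their boundedness: from $\sum_{n\ge 0}\sup_{t\in[n,n+1)}\tilde\chi_j(t-s)\le 2\sum_{n\ge 0}\sup_{t\in[n,n+1)}\tilde\chi_j(t)=:2A_j<\infty$ uniformly in $s$, one gets a remainder upper-sum bound $2\bar c\,\max_j A_j\cdot\E[(\sigma-N)^+]\to 0$. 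The paper avoids this subtlety altogether: it first shows $t\mapsto e^{-\alpha t}\overline\phi_\J(t)$ is a.e.\ continuous (as the locally uniform limit of the $\overline\phi_n$ from (a)), then bounds $\sum_{n\ge 0}\sup_{[n,n+1)}$ of the full expression directly by $A\,\E[\sigma]$, and invokes Resnick's criterion.

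\medskip
\noindent\textbf{Part (c).} The step ``ensured by $h(S_k)\le h(S_\sigma)$ and $\E[h(S_\sigma)]<\infty$'' is a non sequitur: that bound yields $\sum_{k<\sigma}h(S_k)\le\sigma\,h(S_\sigma)$, and since $\sigma$ and $S_\sigma$ are not independent, $\E[h(S_\sigma)]<\infty$ does not give $\E[\sigma\,h(S_\sigma)]<\infty$. The paper gets $\E\big[\sum_{k<\sigma}h(S_k)\big]<\infty$ directly from Lemma~\ref{Lem:moments MRP}, which is precisely the Markov-renewal lemma you cite for $\E[h(S_\sigma)]$; it delivers both conclusions at once. (Your subadditivity splitting $h(a+b)\le C_0(h(a)+h(b))$ is otherwise a perfectly good alternative to the paper's ratio bound $(h(t)\vee 1)/(h(t-S(x))\vee 1)\le h(2S(x))\vee c$.)

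\medskip
\noindent\textbf{Part (e).} Minor: the dominating function is not a constant multiple of $\sigma$. At $t=S_k$ the $k$th summand equals $(v_1/v_{M_k})S_k^{\delta}\int_0^\infty\tilde\chi_{M_k}$, so the correct dominant is of order $\sigma+\sum_{k<\sigma}S_k^{\delta}$, whose integrability for $\delta\le 1$ follows from (A4) and Lemma~\ref{Lem:moments MRP} with $h(x)=x$. The paper is equally terse here, but your explicit claim is incorrect.
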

\begin{proof}
(a)
From the representation
\begin{equation*}
\phi_{\J}(t) ~=~    \sum_{x \in \I} \1_{\{x \in \G\}} \1_{\{x \prec \J\}} [\phi]_x(t-S(x))
\end{equation*}
it can be concluded that $\phi_{\J}$ is product-measurable. In
fact, it suffices to check that each summand is
product-measurable. Fix any $x\in \I$. The factors
$\1_{\{x \in \G\}}$ and $\1_{\{x \prec \J\}}$ are $\A$-measurable.
Since they do not depend on $t$, they are product-measurable. The
shift $\omega \mapsto \sigma_x \omega$ is measurable, thus the
mapping $(\omega,t) \mapsto (\sigma_x \omega,t)$ is
product-measurable and hence so is $[\phi]_x$. Finally, since
$(\omega,t) \mapsto t-S(x,\omega)$ is product-measurable, so is
$[\phi]_x(t-S(x))$. Further,
\begin{equation*}
\sup_{0 \leq s \leq t} \phi_{\J}(s) ~\leq~ Y_t ~:=~ e^{\alpha t} \sum_{x \prec \J}  e^{-\alpha S(x)} \sup_{0 \leq s \leq t}  [\phi]_x(s)
\end{equation*}
where it should be recalled that $\sum_{x \prec \J}$ means summation over the $x \in \G$ with $x \prec \J$.
$Y_t$ is a random variable since $\phi$ is separable.
In view of \eqref{eq:sigma<->prec J general},
\begin{eqnarray}
\E [Y_t]
& = &
e^{\alpha t} \E \!\bigg[\sum_{x \prec \J}  e^{-\alpha S(x)} \sup_{0 \leq s \leq t}  [\phi]_x(s) \bigg]
~=~ e^{\alpha t} \E \!\bigg[\sum_{k=0}^{\sigma-1} \frac{v_1}{v_{M_k}} \E^{M_k} \Big[ \sup_{0 \leq s \leq t}  \phi(s) \Big]\bigg]    \notag  \\
& \leq &
e^{\alpha t} \, v_1 \, \E [\sigma] \max_{j=1,\ldots,p} \frac{\E^j \! \big[\sup_{0 \leq s \leq t} \phi(s) \big]}{v_j} ~<~    \infty. \label{eq:Esup_t phi_J<infty}
\end{eqnarray}
In particular, $\phi_{\J}(t)$ is finite for all $t \geq 0$ (simultaneously) a.s.
If $(x_k)_{k \geq 1}$ is an enumeration of $\I$, then $\phi_{\J}$ is the almost sure limit of
\begin{equation*}
\phi_n(t)   ~:=~    \sum_{k=1}^{n} \1_{\{x_k \in \G\}} \1_{\{x_k \prec \J\}}[\phi]_{x_k}(t-S_k).
\end{equation*}
What is more, the almost sure convergence of $\phi_n$ to $\phi_{\J}$ is locally uniform since
\begin{equation*}
\sup_{0 \leq s \leq t} \big|\phi_n(s)-\phi_{\J}(s) \big|
~\leq~ \sum_{k>n} \1_{\{x_k \in \G\}} \1_{\{x_k \prec \J\}} \sup_{0 \leq s \leq t}  [\phi]_{x_k}(s) ~\leq~  Y_t
\end{equation*}
and
\begin{equation}
\E \bigg[\sum_{k>n}  \1_{\{x_k \in \G\}} \1_{\{x_k \prec \J\}} \sup_{0 \leq s \leq t}  [\phi]_{x_k}(s) \bigg] ~\to~   0   \quad   \text{as } n
\to \infty \label{eq:a.s. uniform convergence}
\end{equation}
by the dominated convergence theorem.
Hence, if $\phi$ is $D$-valued, so is $\phi_{\J}$, as the locally uniform limit of $D$-valued functions.

\noindent (b) \eqref{eq:a.s. uniform convergence} implies that
$\overline{\phi}_n = \E[\phi_n]$ converges locally uniformly to
$\overline{\phi}_{\J} = \E[\phi_{\J}]$. In particular,
$\overline{\phi}_{\J}$ is continuous at each point in
which all $\overline{\phi}_n$ are continuous. Consequently,
$\overline{\phi}_{\J}$ is continuous almost everywhere with
respect to Lebesgue measure.
Using \eqref{eq:sigma<->prec J general} we obtain
\begin{eqnarray}
e^{-\alpha t} \, \overline{\phi}_{\J}(t)
& = &
e^{-\alpha t} \, \E \bigg[\sum_{x \prec \J} [\phi]_x(t-S(x)) \bigg] \notag  \\
& = &
\E \bigg[\sum_{x \prec \J} e^{-\alpha S(x)} e^{-\alpha(t-S(x))} [\phi]_x(t-S(x)) \bigg] \notag  \\
& = & \E \bigg[\sum_{k=0}^{\sigma-1} \frac{v_1}{v_{M_k}}
e^{-\alpha(t-S_k)} \,^{M_k}\overline{\phi}(t-S_k) \bigg]	\notag	\\
&\leq &
\E \bigg[\sum_{k=0}^{\sigma-1} \max_{i=1,\ldots,p} \frac{v_1}{v_i}e^{-\alpha(t-S_k)} \,^i \overline{\phi}(t-S_k) \bigg]	\label{eq:e^-alpha t E phi_J(t)}
\end{eqnarray}
where $\iphi(t) = \E^i[\phi(t)]$, $i=1,\ldots,p$. The
latter implies that since $t\mapsto \max_{i=1,\ldots,p} \frac{v_1}{v_i}e^{-\alpha t} \,^i \overline{\phi}(t)$ is bounded
(as directly Riemann integrable), so is $t\mapsto e^{-\alpha t} \, \overline{\phi}_{\J}(t)$.
Set
\begin{equation*}
A	~:=~	\sum_{k\geq 0}\sup_{k \leq t<k+1} \max_{i=1,\ldots,p} \frac{v_1}{v_i}e^{-\alpha t} \,^i \overline{\phi}(t)
\end{equation*}
and note that $A<\infty$ because of direct Riemann integrability.
Using \eqref{eq:e^-alpha t E phi_J(t)} we infer
\begin{equation*}
\sum_{k\geq 0}\sup_{k\leq t<k+1}e^{-\alpha t} \, \overline{\phi}_{\J}(t)\leq A\E [\sigma]<\infty,
\end{equation*}
and according to Remark 3.10.4 in p.~236 in \cite{Resnick:2002} the direct
Riemann integrability of $t\mapsto e^{-\alpha t} \, \overline{\phi}_{\J}(t)$ follows.

\noindent (c)
Condition \ref{Con:SLLN phi} guarantees that
$U := \sup_{t \geq 0}  \, (h(t) \vee 1) e^{-\alpha t}\phi(t)$ has finite expectation w.r.t.\ $\Prob^i$ for $i=1,\ldots,p$.
Further,
\begin{eqnarray*}
\sup_{t \geq 0} \, (h(t) \vee 1) e^{-\alpha t} \phi_{\J}(t)
& \leq &
\sum_{x \prec \J} e^{-\alpha S(x)} \sup_{t \geq 0} \, (h(t) \vee 1) e^{-\alpha (t-S(x))} [\phi]_x(t-S(x))   \\
& \leq &
\sum_{x \prec \J} e^{-\alpha S(x)} [U]_x \sup_{t \geq 0} \Big( \1_{\{S(x) \leq t\}}  \frac{h(t) \vee 1}{h(t-S(x)) \vee 1}\Big). \\
\end{eqnarray*}
Notice that due to the fact that $h$ is increasing and regularly varying of index $1$ at $+\infty$,
we can find a finite constant $c \geq 1$ such that, for all $t \geq 0$,
$(h(t) \vee 1)/(h(t-S(x)) \vee 1) \leq c$ when $S(x) \leq t/2$.
On the other hand, for all $t \geq 0$,
when $S(x) \geq t/2$, then $(h(t) \vee 1)/(h(t-S(x)) \vee 1) \leq h(2S(x)) \vee 1$.
Consequently,
\begin{equation*}
\sup_{t \geq 0} \Big( \1_{\{S(x) \leq t\}}  \frac{h(t) \vee 1}{h(t-S(x)) \vee 1} \Big) \leq h(2S(x)) \vee c.
\end{equation*}
Plugging this into the estimate above and integrating w.r.t.\ $\Prob$ gives
\begin{eqnarray*}
\E \Big[\sup_{t \geq 0} \, (h(t) \vee 1) e^{-\alpha t} \phi_{\J}(t)\Big]
& \leq &
\E[U] \, \E\bigg[\sum_{x \prec \J} e^{-\alpha S(x)} (h(2S(x)) \vee c)\Big]  \\
& \leq &
\E[U] \, \max_{i=1,\ldots,p} \frac{v_1}{v_i} \,\E\bigg[\sum_{x \prec \J} \frac{v_{\tau(x)}}{v_1}e^{-\alpha S(x)} (h(2S(x)) \vee c)\Big] \\
& \leq & \E[U] \, \max_{i=1,\ldots,p} \frac{v_1}{v_i}
\,\E\bigg[\sum_{k=0}^{\sigma-1} (h(2S_k) \vee c) \bigg]
\end{eqnarray*}
with $\sigma = \inf\{k > 0: M_k = 1\}$.
Finiteness of the last expectation is a consequence of Lemma \ref{Lem:moments MRP}
which applies due to Remark \ref{Rem:function h}.

\noindent (d)
Condition \ref{Con:Ratio convergence phi} implies that
there exists $\theta < \alpha$ such that
\begin{equation*}
C_\theta:=v_1\max_{i=1,\ldots,p} {\E^i \Big[\sup_{t \geq 0}
e^{-\theta t} \phi(t)\Big]\over v_i} < \infty.
\end{equation*}
By Lemma \ref{Lem:M}(c) there are $C, \gamma > 0$ such that
$\Prob(\sigma > n) \leq C^2 e^{-\gamma n}$ for all $n \geq 0$.
Condition \ref{Con:Ratio convergence mu} ensures that
$c(2(\alpha-\beta))= \max_{i=1,\ldots,p} \E^i[e^{2(\alpha-\beta)S_1}] < e^{\gamma}$ for some $\beta \in
(\theta,\alpha)$ (see the proof of Proposition \ref{Prop:J
inherits2}{b}). Now the claim follows from
\begin{align*}
\E \Big[ & \sup_{t \geq 0} e^{-\beta t} \phi_{\J}(t) \Big]
~=~ \E \bigg[ \sup_{t \geq 0} e^{-\beta t} \sum_{x \prec \J} [\phi]_x(t-S(x)) \bigg]    \\
& \leq~
\E \bigg[  \sum_{x \prec \J} e^{-\beta S(x)} \sup_{t \geq 0} e^{-\beta(t-S(x))} [\phi]_x(t-S(x)) \bigg] \\
& \leq~
C_{\beta} \E \bigg[  \sum_{k=0}^{\sigma-1} e^{(\alpha-\beta) S_k} \bigg] \\
& =~
C_{\beta} \sum_{k \geq 0} \E \big[\1_{\{\sigma > k\}} e^{(\alpha-\beta) S_k} \big]   \\
& \leq~
C_{\beta} \sum_{k \geq 0} \Prob(\sigma > k)^{1/2} \, \E [e^{2(\alpha-\beta) S_k}]^{1/2} \\
& \leq~ C_{\beta}  C \sum_{k \geq 0} e^{-\gamma k/2}
c(2(\alpha-\beta))^{k/2} ~<~ \infty,
\end{align*}
where the Cauchy-Schwarz inequality has been used for the $5$th line.

\noindent
(e)
While the Lebesgue integrability of $t \mapsto e^{-\alpha t} \E[\phi_{\J}(t)]$
follows from
\begin{equation}	\label{eq:e^-alpha t E phi_J(t) Lebesgue integrable}
\int_0^\infty e^{-\alpha t} \, \overline{\phi}_{\J}(t) \, \dt
~\leq~ \E[\sigma] \max_{i=1,\ldots,p} \frac{v_1}{v_i} \int_0^\infty e^{-\alpha t} \iphi(t) \, \dt ~<~ \infty
\end{equation}
which is a consequence of \eqref{eq:e^-alpha t E phi_J(t)} and the
integrability of $t\mapsto \max_{i=1,\ldots,p} \frac{v_1}{v_i} e^{-\alpha t} \iphi(t)$,
its boundedness follows from the boundedness of $t\mapsto \max_{i=1,\ldots,p} \frac{v_1}{v_i}
e^{-\alpha t} \iphi(t)$, \eqref{eq:e^-alpha t E phi_J(t)} and $\E[\sigma] < \infty$ (see Lemma \ref{Lem:M}(b)).

It remains to show that \eqref{eq:Rate of convergence phi} holds for $\phi_{\J}$, that is,
\begin{equation*}
t^{\delta} \int_t^{\infty} e^{-\alpha s} \E[\phi_{\J}(s)] \,\ds ~\to~   0
\quad   \text{and}  \quad
t^{\delta} \sup_{s \geq t} e^{-\alpha s} \E [\phi_{\J}(s)]  ~\to~   0
\quad   \text{as } t \to \infty.
\end{equation*}
As to the first relation, notice that by a variant of the argument leading to \eqref{eq:e^-alpha t E phi_J(t) Lebesgue integrable},
\eqref{eq:Rate of convergence phi} and the dominated convergence theorem,
we have
\begin{align*}
t^{\delta} \int_t^{\infty} & e^{-\alpha s} \E[\phi_{\J}(s)] \,\ds
~\leq~ \E\bigg[\sum_{k=0}^{\sigma-1} v_1 t^{\delta} \max_{i=1,\ldots,p}  \int_{t-S_k}^{\infty} e^{-\alpha s} \frac{\iphi(s)}{v_i} \bigg] \,\ds
~\to~   0
\end{align*}
as $t \to \infty$. Similarly, the second relation holds since
\begin{align*}
t^{\delta} \sup_{s \geq t} & \, e^{-\alpha s} \E [\phi_{\J}(s)]
~\leq~  v_1 t^{\delta} \sup_{s \geq t} \E\bigg[\sum_{k=0}^{\sigma-1} e^{-\alpha (s-S_k)} \max_{i=1,\ldots,p} \frac{\iphi(s-S_k)}{v_i} \bigg]    \\
& \leq~ v_1 \E\bigg[\sum_{k=0}^{\sigma-1} \max_{i=1,\ldots,p} t^{\delta} \frac{\sup_{s \geq t-S_k} e^{-\alpha s}  \iphi(s)}{v_i} \bigg]
~\to~   0   \quad   \text{as } t \to \infty
\end{align*}
by the dominated convergence theorem (using that $\sup_{t \geq 0} e^{-\alpha t} \iphi(t) < \infty$ for $i=1,\ldots,p$).
\end{proof}

\section{Proofs of the main results}    \label{sec:proofs}


The proofs of the main results rely on a decomposition of $\Z^{\phi}(t)$ along the optional lines $\J_n$, $n \geq 0$.
For a random characteristic $\psi$, define $\Z^{1,\psi}$ by
\begin{equation}    \label{eq:M^1,psi}
\Z^{1,\psi}(t)  ~:=~    \sum_{x \in \G^1} [\psi]_x(t-S(x))
\end{equation}
where $\G^1 := \{x \in \G: \tau(x) = 1\}$.
We choose $\psi := \phi_{\J}$ as defined in \eqref{eq:phi_J}.
Then
\begin{eqnarray}
\Z^{1,\phi_{\J}}(t)
& = &   \sum_{x \in \G^1} [\phi_{\J}]_x(t-S(x))
~=~ \sum_{x \in \G^1} \sum_{y \prec [\J]_x} [\phi]_{xy}(t-S(xy))    \notag  \\
& = &   \sum_{x \in \G} [\phi]_x(t-S(x))
~=~ \Z^{\phi}(t).   \label{eq:key identity}
\end{eqnarray}
Using this connection, limit theorems for the multi-type process
will be derived from the corresponding single-type
ones\footnote{In \cite{Meiners:2010}, along similar
lines limit theorems for branching random walks on the line are
obtained from the corresponding results for
branching random walks with positive steps only.}.

\subsection{Convergence in probability}

\begin{proof}[Proof of Theorem \ref{Thm:WLLN}]
By \eqref{eq:key identity}, the first part of the theorem
follows if we can check that $\Z_{\J}$ and $\phi_{\J}$ satisfy the
assumptions of Theorem 3.1 in \cite{Nerman:1981}. That $\Z_{\J}$
satisfies the standing assumptions given in p.~366 of
\cite{Nerman:1981} is established in Proposition \ref{Prop:J
inherits1}. That $\phi_{\J}$ satisfies the conditions of Theorem
3.1 in \cite{Nerman:1981} is secured by Lemma \ref{Lem:phi_J
inherits} with the exception that it cannot be guaranteed that
$\phi_{\J}$ is separable. On the other hand, the perusal of the
proof of Theorem 3.1 in \cite{Nerman:1981} makes it clear that the
assumption of separability can be omitted as long as $\sup_{s \leq
t} \phi_{\J}(s)$ is dominated by an integrable random variable.
This is indeed the case here, see Lemma \ref{Lem:phi_J
inherits}(a). Consequently, Theorem 3.1 in \cite{Nerman:1981}
yields
\begin{equation}
e^{-\alpha t} \Z^{\phi}(t)  ~=~ e^{-\alpha t} \Z^{1,\phi_{\J}}(t)   ~\to~  \frac{W}{-m'(\alpha)} \int_0^{\infty} e^{-\alpha t} \E [\phi_{\J}(t)] \, \dt
\end{equation}
in probability as $t \to \infty$. From Proposition \ref{Prop:J
inherits1}(c), we know that
\begin{equation*}
-m'(\alpha) = (u_1 v_1)^{-1} \sum_{i,j=1}^p u_i v_j (-\im_j)'(\alpha).
\end{equation*}
Left with the calculation of the integral, we write, recalling \eqref{eq:phi_J}
and using \eqref{eq:sigma<->J general} and Lemma \ref{Lem:M}(b),
\begin{align*}
\int_0^{\infty} & e^{-\alpha t} \E [\phi_{\J}(t)] \, \dt
~=~
\int_0^{\infty} \E \bigg[\sum_{x \prec \J} e^{-\alpha S(x)} e^{-\alpha(t-S(x))}[\phi]_x(t-S(x))\bigg] \, \dt    \\
& =~
\int_0^{\infty} \E \bigg[\sum_{k=0}^{\sigma-1} e^{-\alpha(t-S_k)} \frac{v_1}{v_{M_k}}\cdot \,\!^{M_k}\overline{\phi}(t-S_k) \bigg] \, \dt   \\
& =~
\sum_{i=1}^p \E [\#\{k < \sigma: M_k=i\}] \frac{v_1}{v_i} \int_0^{\infty}  e^{-\alpha t} \E^i[\phi(t)] \, \dt   \\
& =~
\sum_{i=1}^p \frac{u_i}{u_1} \int_0^{\infty}  e^{-\alpha t} \E^i[\phi(t)] \, \dt
\end{align*}
where $^i\overline{\phi}(t) = \E^i[\phi(t)]$.

As for convergence in mean, observe that (A5) implies
$\E[V_1\log^+V_1]<\infty$ by Proposition \ref{Prop:J inherits2}
(a) and apply Corollary 3.3 in \cite{Nerman:1981}.
\end{proof}

\subsection{Almost sure convergence}
\begin{proof}[Proof of Theorem \ref{Thm:SLLN}]
The proof is similar to the proof of Theorem \ref{Thm:WLLN}
but based on an application of Theorem 5.4 in \cite{Nerman:1981}
rather than Theorem 3.1 in the same source.
It suffices to check that the conditions of Theorem 5.4 in \cite{Nerman:1981}
are satisfied.
As in the proof of Theorem \ref{Thm:WLLN}
we conclude from Proposition \ref{Prop:J inherits1} that $\Z_{\J}$ satisfies the standing assumptions given in p.\;366 of \cite{Nerman:1981}.
Further, $\phi_{\J}$ has $D$-valued paths by Lemma \ref{Lem:phi_J inherits}(a).
The lemma applies because Condition \ref{Con:SLLN phi} for $\phi$ entails
$\E^i [\sup_{0 \leq s \leq t} \phi(s)] < \infty$ for $i=1,\ldots,p$ and all $t\geq 0$.
It remains to check Conditions 5.1 and 5.2 of \cite{Nerman:1981} for the embedded single-type process and the characteristic $\phi_{\J}$, respectively.
To see that Condition 5.1 holds in the present context first notice that \cite[Eq.\;(5.8)]{Nerman:1981}
is sufficient for Condition 5.1 and that validity of \cite[Eq.\;(5.8)]{Nerman:1981}
follows from Condition \ref{Con:SLLN mu} together with Proposition \ref{Prop:J inherits2}(c).
Finally, $\phi_{\J}$ satisfies Condition 5.2 of \cite{Nerman:1981} by Lemma \ref{Lem:phi_J inherits}(c)
(notice that one can assume without loss of generality that $g=h$ in Conditions \ref{Con:SLLN mu} and \ref{Con:SLLN phi}).
The form of the limit can be deduced from Theorem \ref{Thm:WLLN}.
\end{proof}

\subsection{Ratio convergence}

\begin{proof}[Proof of Theorem \ref{Thm:Ratio convergence}]
The scheme of proof is identical to that of Theorem \ref{Thm:WLLN}
but it is based on an application of Theorem 6.3 in \cite{Nerman:1981} rather than Theorem 3.1 in the same source.
Hence, we have to check that the assumptions of Theorem 6.3 in \cite{Nerman:1981} are fulfilled.
Since Condition \ref{Con:Ratio convergence mu} holds, we
conclude that $m(\beta) < \infty$ for some $\beta < \alpha$ by
Proposition \ref{Prop:J inherits2}(b). Further $\phi_{\J}$ and
$\psi_{\J}$ have $D$-valued paths by Lemma \ref{Lem:phi_J
inherits}(a) which applies because Condition \ref{Con:Ratio
convergence phi} for $\phi$ and $\psi$ entails $\E^i \!
\big[\sup_{0 \leq s \leq t} \phi(s) \big] < \infty$ and $\E^i \!
\big[\sup_{0 \leq s \leq t} \psi(s) \big] < \infty$ for
$i=1,\ldots,p$ and all $t\geq 0$. Finally, invoking Lemma
\ref{Lem:phi_J inherits}(d) gives $\E[\sup_{t \geq 0} e^{-\beta
t} \phi_{\J}(t)] < \infty$ and $\E[\sup_{t \geq 0} e^{-\beta t}
\psi_{\J}(t)] < \infty$, where we assume w.l.o.g.~that the $\beta$
from Lemma \ref{Lem:phi_J inherits} is the same $\beta$ for which
$m(\beta) < \infty$. Theorem 6.3 in \cite{Nerman:1981} now yields
that, on $S$,
\begin{equation*}
\frac{\Z^{\phi}(t)}{\Z^{\psi}(t)}   ~\to~   \frac{\int_0^{\infty} e^{-\alpha t} \E [\phi_{\J}(t)] \, \dt}{\int_0^{\infty} e^{-\alpha t} \E [\psi_{\J}(t)] \, \dt}
\quad   \text{a.s.~as } t \to \infty.
\end{equation*}
Numerator and denominator of the fraction on the right-hand side have been calculated in the proof of Theorem \ref{Thm:WLLN}.
\end{proof}

\subsection{Rate of convergence}

We first prove Theorem \ref{Thm:Rates of convergence} in the case
$p=1$, that is, in the single-type case in which $\bu = \bv = 1$. Recall the definition of $\J(t)$:
\begin{equation*}
\J(t) := \{x \in \G: S(x) > t,\,S(x|_k) \leq t \text{ for all } k < |x|\}.
\end{equation*}

\begin{proof}[Proof of Theorem \ref{Thm:Rates of convergence}: The single-type case]
We have to show that
\begin{equation}    \tag{\ref{eq:Rate in WLLN}}
t^{\delta}|e^{-\alpha t} \Z^{\phi}(t)-Vm^{\phi}_{\infty}| \to 0 \quad   \text{in } \Prob\text{-probability as } t \to \infty
\end{equation}
where $m^{\phi}_t := e^{-\alpha t} \E[\Z^{\phi}(t)]$ and
\begin{equation*}
m^{\phi}_{\infty}   ~=~ \lim_{t \to \infty} m^{\phi}_t      ~=~ \frac{1}{-m'(\alpha)} \int_0^{\infty} e^{-\alpha u} \E[\phi(u)] \, \du.
\end{equation*}
This limit exists since $m^{\phi}_{t}$ solves a renewal equation,
see formula (2.4) in \cite{Nerman:1981}. To prove
\eqref{eq:Rate in WLLN}, we truncate $\phi$ at some $c > 0$ and
set $\phi_c(t) := \phi(t) \1_{[0,c]}(t)$, $t \geq 0$. For
$0 < c < s \leq t$, we consider \eqref{eq:Rate in WLLN} at $t+s$
and use the triangular inequality to obtain
\begin{align*}
(t+s)^{\delta} & |e^{-\alpha (t+s)} \Z^{\phi}(t+s)-V m^{\phi}_{\infty}| \\
& \leq~ (t+s)^{\delta} \big(e^{-\alpha (t+s)} (\Z^{\phi}(t+s) - \Z^{\phi_c}(t+s)) + V(m^{\phi}_{\infty}-m^{\phi_c}_{\infty})\big)  \\
&\hphantom{\leq}~ + (t+s)^{\delta} |e^{-\alpha (t+s)} \Z^{\phi_c}(t+s)-V m^{\phi_c}_{\infty}|.
\end{align*}
The expectation of the first summand on the right-hand side is equal to
\begin{eqnarray*}
(t+s)^{\delta} \big(m^{\phi}_{t+s} - m^{\phi_c}_{t+s} + m^{\phi}_{\infty}-m^{\phi_c}_{\infty}\big)
& = &
(t+s)^{\delta} \big(m^{\phi-\phi_c}_{t+s} + m^{\phi-\phi_c}_{\infty} \big)  \\
& \leq &
(t+s)^{\delta} \big(|m^{\phi-\phi_c}_{t+s}-m^{\phi-\phi_c}_{\infty}| +  2 m^{\phi-\phi_c}_{\infty} \big).
\end{eqnarray*}
Here, when choosing $2c=s=t$, we have
\begin{equation*}
(t+s)^{\delta} m^{\phi-\phi_c}_{\infty} ~=~ \frac{(t+s)^{\delta}}{-m'(\alpha)} \int_c^{\infty} e^{-\alpha u} \E[\phi(u)] \, \du
~\to~   0   \quad   \text{as } t \to \infty
\end{equation*}
by \eqref{eq:Rate of convergence phi}. Assumption
\eqref{eq:ES_1^(1+delta)<infty} is equivalent to
$\E[S_1^{1+\delta}] < \infty$. This together with Condition
\ref{Con:spread-out} (see, in particular, Remark
\ref{Rem:spread-out}) and Condition \ref{Con:Rate of convergence
phi} allows us to apply Theorem 4.2(ii) in
\cite{Nummelin+Tuominen:1983} (with $\psi(t)=t^\delta$ and
$g(s)=e^{-\alpha s} \E[\phi(s)]$, the notation of
\cite{Nummelin+Tuominen:1983}) which gives
\begin{eqnarray*}
(t+s)^{\delta} |m^{\phi-\phi_c}_{t+s}-m^{\phi-\phi_c}_{\infty}|
& \leq &
(t+s)^{\delta} \sup_{f} \bigg|f*U(t+s)-\frac{1}{-m'(\alpha)} \int_0^{\infty} f(u) \,\du \bigg|
~\to~	0
\end{eqnarray*}
as $t \to \infty$
where $U$ denotes the renewal measure of the random walk $(S_n)_{n \geq 0}$ and
the supremum is over all Lebesgue integrable functions $f \geq 0$ satisfying $f(u) \leq e^{-\alpha u} \E[\phi(u)]$ for all $u \in \R$.
It thus remains to show that
\begin{equation}
(t+s)^{\delta} |e^{-\alpha (t+s)} \Z^{\phi_c}(t+s) - V m^{\phi_c}_{\infty}| ~\stackrel{\Prob}{\to}~ 0   \quad   \text{as } t \to \infty
\end{equation}
where $2c=s=t$. To this end, we choose $0 < a < 1$ and estimate as
follows
\begin{align*}
(t&+s)^{\delta} |e^{-\alpha(t+s)} \Z^{\phi_c}(t+s) - V m^{\phi_c}_{\infty}| \\
&\leq~  (t+s)^{\delta} \bigg| \! \sum_{x \in \J(t)} e^{-\alpha S(x)} (e^{-\alpha(t+s-S(x))} [\Z^{\phi_c}]_x(t\!+\!s\!-\!S(x))-m^{\phi_c}_{t+s-S(x)})\bigg|  \\
&\hphantom{\leq}~   + (t+s)^{\delta} \sum_{x \in \J(t):S(x) \leq t+at} \!\!\!\!\!\!\!\!\! e^{-\alpha S(x)} |m^{\phi_c}_{t+s-S(x)}-m^{\phi_c}_{\infty}|  \\
&\hphantom{\leq}~   + (t+s)^{\delta} \sum_{x \in \J(t):S(x) > t+at} \!\!\!\!\!\!\!\!\! e^{-\alpha S(x)} |m^{\phi_c}_{t+s-S(x)}-m^{\phi_c}_{\infty}| \\
&\hphantom{\leq}~   + (t+s)^{\delta} |V(t)-V| m^{\phi_c}_{\infty}   \\
&=:~  \sum_{j=1}^4 |I_j(t)|.
\end{align*}
Since \eqref{eq:T_te^alphat sufficient} is a consequence of
\eqref{eq:ES_1^(1+delta)<infty} and Condition \ref{Con:Rate of
convergence Z} holds, Proposition \ref{Prop:rate of convergence of
Nerman's martingale} applies and gives
\begin{equation*}
|I_4(t)| ~=~    (2t)^{\delta} |V(t)-V| m^{\phi_c}_{\infty}  ~\to~   0   \quad   \text{a.s.~as } t \to \infty.
\end{equation*}
Further,
\begin{eqnarray*}
\E [|I_3(t)|]
& \leq &
2 \Big( \sup_{u \geq 0} m^{\phi_c}_u \Big)(t+s)^{\delta} \, \E \bigg[ \sum_{x \in \J(t):S(x) > t+at} \!\!\!\!\!\!\!\!\! e^{-\alpha S(x)}\bigg]  \\
& \leq &
2 \Big( \sup_{u \geq 0} m^{\phi}_u \Big)(2t)^{\delta} \, \Prob(R_t > at)
\end{eqnarray*}
where $R_t = S_{\nu(t)}-t$ and $\nu(t) = \inf\{n \in \N_0: S_n > t\}$.
An application of Lemma \ref{Lem:speed of excess} yields $\E[|I_3(t)|] \to 0$ when $t \to \infty$.
Turning to $I_2$, we see that
\begin{eqnarray*}
\E[|I_2(t)|]
& = &
\E \bigg[\sum_{x \in \J(t):S(x) \leq t+at} \!\!\!\!\!\!\!\!\! e^{-\alpha S(x)} (t+s)^{\delta} |m^{\phi_c}_{t+s-S(x)}-m^{\phi_c}_{\infty}|\bigg] \\
& \leq &
(2t)^{\delta} \sup_{u \geq (1-a)t} |m^{\phi_c}_{u}-m^{\phi_c}_{\infty}|
~\to~   0
\end{eqnarray*}
as $t \to \infty$ by Theorem 4.2(ii) in
\cite{Nummelin+Tuominen:1983} (applicability of the cited
result has already been justified).

It remains to show that $I_1(t) \to 0$ in $\Prob$-probability when
$t \to \infty$. For fixed $t$, define $Z_x :=
e^{-\alpha(t+s-S(x))} [\Z^{\phi_c}]_x(t\!+\!s\!-\!S(x))$. Then,
given $\H_{T_t}$, the $Z_x$, $x \in \J(t)$ are independent. In
order to use Chebyshev's inequality below, we truncate the $Z_x$.
Define $Z_x' := Z_x \1_{\{Z_x \leq e^{\alpha S(x)}  \}}$ and $m'_x
:= \E [Z_x']$. Notice that $Z_x' \leq Z_x$ and hence $m_x' \leq
m^{\phi_c}_{t+s-S(x)}$. Let $I_1'(t)$ be defined as $I_1(t)$
but with $Z_x$ replaced by $Z_x'$ and $m^{\phi_c}_{t+s-S(x)}$
replaced by $m'_x$. Then
\begin{equation*}
I_1(t) ~=~    I_1'(t) + (t+s)^{\delta} \sum_{x \in \J(t)}
e^{-\alpha S(x)} (m'_x-m^{\phi_c}_{t+s-S(x)})
\end{equation*}
on $\{Z_x \leq e^{\alpha S(x)} \text{ for all } x \in \J(t)\}$. Using this, we infer for arbitrary $\eta > 0$,
\begin{eqnarray*}
\Prob(|I_1(t)| \geq \eta)
& = &
\E[\Prob(|I_1(t)| \geq \eta | \H_{T_t})]    \\
&\leq &
\E \bigg[\sum_{x \in \J(t)} \Prob(Z_x > e^{\alpha S(x)}  | \H_{T_t})\bigg]  \\
& &
+ \E [\Prob(|I_1'(t)| \geq \eta/2 | \H_{T_t})]  \\
& &
+ \frac{2(t+s)^{\delta}}{\eta} \E \bigg[ \! \sum_{x \in \J(t)}\!\!\! e^{-\alpha S(x)} (m^{\phi_c}_{t+s-S(x)}-m_x')\bigg].
\end{eqnarray*}
We consider the last three terms separately.
Using Markov's inequality, we obtain for the first term that
\begin{align*}
\E & \bigg[ \sum_{x \in \J(t)} \Prob(Z_x > e^{\alpha S(x)}  | \H_{T_t})\bigg]   \\
&\leq~  \E \bigg[ \sum_{x \in \J(t)} e^{-\alpha S(x)}  \, \E[Z_x \1_{\{Z_x > e^{\alpha S(x)} \}} | \H_{T_t}]\bigg]  \\
&\leq~  \E \bigg[ \sum_{x \in \J(t)} e^{-\alpha S(x)}  \bigg] \sup_{u \geq 0} \E\big[e^{-\alpha u} \Z^{\phi}(u) \1_{\{e^{-\alpha u} \Z^{\phi}(u) > e^{\alpha t}\}}\big] \\
&=~  \sup_{u \geq 0} \E\big[e^{-\alpha u} \Z^{\phi}(u) \1_{\{e^{-\alpha u} \Z^{\phi}(u) > e^{\alpha t}\}}\big]
~\to~   0
\end{align*}
as $t \to \infty$ by the uniform integrability of the family
$e^{-\alpha u} \Z^{\phi}(u)$, $u \geq 0$ (which is a consequence
of Condition \ref{Con:Rate of convergence Z^phi}). Next, we
estimate the second term
\begin{align*}
\E & [\Prob(|I_1'(t)| \geq \eta/2 | \H_{T_t})]  \\
& \leq~ \frac{4(t+s)^{2\delta}}{\eta^2} \E\bigg[\sum_{x \in \J(t)} e^{-2\alpha S(x)} \Var[Z_x'| \H_{T_t}]\bigg] \\
& \leq~ \frac{4(t+s)^{2\delta}}{\eta^2} \E\bigg[\sum_{x \in \J(t)} e^{-2\alpha S(x)} \E[Z_x^2 \1_{\{Z_x \leq e^{\alpha S(x)}\}}| \H_{T_t}]\bigg]    \\
& =~ \frac{4(t+s)^{2\delta}}{\eta^2} \E\bigg[\sum_{x \in \J(t)} e^{-2\alpha S(x)} \E\bigg[h(Z_x) \frac{Z_x^2}{h(Z_x)} \1_{\{Z_x \leq e^{\alpha S(x)}\}}| \H_{T_t} \bigg]\bigg]  \\
& \leq~ \frac{4(t+s)^{2\delta}}{\eta^2} \E\bigg[\sum_{x \in \J(t)} e^{-\alpha S(x)} \frac{e^{\alpha S(x)}}{h(e^{\alpha S(x)})} \bigg]
\sup_{u \geq 0} \E\big[h\big(e^{-\alpha u} \Z^{\phi}(u)\big)\big]   \\
& \leq~ \frac{4^{1+\delta}}{\eta^2}\frac{e^{\alpha t} t^{2\delta}}{h(e^{\alpha t})}
\sup_{u \geq 0} \E\big[h\big(e^{-\alpha u} \Z^{\phi}(u)\big)\big]       ~\to~   0   \quad   \text{as } t \to \infty
\end{align*}
where we have used the independence of $Z_x'$ and $\H_{T_t}$, Chebyshev's inequality given $\H_{T_t}$
and the facts that $t \mapsto t^2/h(t)$ and $t \mapsto t/h(t)$ are increasing and decreasing, respectively, for large $t$,
$t (\log t)^{2 \delta} / h(t) \to 0$ as $t \to \infty$, and the finiteness of the supremum
(see Condition \ref{Con:Rate of convergence Z^phi}).
Finally, the third term can be estimated as follows:
\begin{align*}
\frac{2(t+s)^{\delta}}{\eta} & \E \bigg[ \! \sum_{x \in \J(t)} e^{-\alpha S(x)} (m^{\phi_c}_{t+s-S(x)}-m_x')\bigg]  \\
& =~
\frac{2^{1+\delta} t^{\delta}}{\eta} \E \bigg[ \! \sum_{x \in \J(t)} e^{-\alpha S(x)} \E[Z_x \1_{\{Z_x > e^{\alpha S(x)}\}} | \H_{T_t}]\bigg]   \\
& =~    \frac{2^{1+\delta} t^{\delta}}{\eta} \E \bigg[ \! \sum_{x \in \J(t)} e^{-\alpha S(x)} \E\bigg[\frac{Z_x}{h(Z_x)} h(Z_x) \1_{\{Z_x > e^{\alpha S(x)}\}} | \H_{T_t}\bigg]\bigg]   \\
& \leq~ \frac{2^{1+\delta} t^{\delta}}{\eta} \E \bigg[ \! \sum_{x \in \J(t)} e^{-\alpha S(x)} \frac{e^{\alpha S(x)}}{h(e^{\alpha S(x)})} \bigg]
\sup_{u \geq 0} \E\big[h\big(e^{-\alpha u} \Z^{\phi}(u)\big)\big]   \\
& \leq~  \frac{2^{1+\delta} t^{\delta}}{\eta}
\frac{e^{\alpha t}}{h(e^{\alpha t})} \sup_{u \geq 0}
\E\big[h\big(e^{-\alpha u} \Z^{\phi}(u)\big)\big] ~\to~   0
\end{align*}
as $t \to \infty$ using the same argument as above.
\end{proof}

\begin{proof}[Proof of Theorem \ref{Thm:Rates of convergence}: The general (multi-type) case]
In view of the embedding technique and key identity \eqref{eq:key
identity}, it is enough to show that the embedded single-type
process $(\Z_{\J_n})_{n \geq 0}$ and the characteristic
$\phi_{\J}$ fulfill the assumptions of the single-type version of
this theorem. According to Proposition \ref{Prop:J
inherits1}, $(\Z_{\J_n})_{n \geq 0}$ satisfies the counterparts of the
standing assumptions (A1)-(A4).
Validity of \eqref{eq:ES_1^(1+delta)<infty} and of Condition
\ref{Con:Rate of convergence Z} for the embedded process follows
from Proposition \ref{Prop:J inherits2}(d) and (e),
respectively. Condition \ref{Con:spread-out} carries over immediately
to the embedded process. Condition \ref{Con:Rate of
convergence Z^phi} is a condition on $\Z^{\phi}$. It is thus
identical for the original and the embedded process (via
\eqref{eq:key identity}). Condition \ref{Con:Rate of convergence
phi} holds for the embedded process according to Lemma \ref{Lem:phi_J inherits}.
\end{proof}

\begin{appendix}
\section{Auxiliary results}
\begin{Lemma}   \label{Lem:moments MRP}
Let $((M_n,S_n))_{n \in \N_0}$ be a Markov renewal process with $\Prob(M_0=1,S_0=0)=1$
and where the driving chain $(M_n)_{n \in \N_0}$ is irreducible and aperiodic and has state space $\{1,\ldots,p\}$.
Further, let $\sigma := \inf\{n \in \N: M_n = 1\}$ and $h(x) = xf(x)$, $x\geq 0$
where $f:[0,\infty) \to [0,\infty)$ is an increasing subadditive function.
If $\E[h(S_1) | M_0 = i] < \infty$ for $i = 1,\ldots,p$, then $\E[h(S_{\sigma})] < \infty$
and $\E \big[ \sum_{k=0}^{\sigma-1} h(S_k)\big] < \infty$.
\end{Lemma}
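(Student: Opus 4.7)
The plan is to exploit the regeneration structure of the Markov renewal process together with the conditional independence of increments given $\mathcal{M} := \sigma(M_n : n \geq 0)$. First I will use two standard facts: for an irreducible finite-state chain, $\sigma$ has $\E[e^{\gamma\sigma}] < \infty$ for some $\gamma > 0$ (hence all polynomial moments), and, conditionally on $\mathcal{M}$, the increments $X_k := S_k - S_{k-1}$ are independent with $X_k$ distributed as $S_1$ under $\Prob^{M_{k-1}}$ given $\{M_1 = M_k\}$. Next, writing $\phi_1(i,\ell) := \E^i[S_1 \mid M_1 = \ell]$ and $\phi_2(i,\ell) := \E^i[f(S_1) \mid M_1 = \ell]$ for admissible pairs $(i,\ell)$, I will verify that both are finite: $\phi_1$ via $h(x) \geq f(a)x$ on $\{x \geq a\}$ for any $a$ with $f(a) > 0$ (the case $f \equiv 0$ being trivial), and $\phi_2$ via the pointwise inequality $f(x) \leq f(1) + h(x)$ (split at $x = 1$, using $x \geq 1 \Rightarrow f(x) \leq xf(x) = h(x)$). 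Since the admissible pairs are finitely many, there is a single constant $C$ with $\phi_1,\phi_2 \leq C$.

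The core estimate proceeds via subadditivity of $f$ applied to $S_\sigma = \sum_{k=1}^\sigma X_k$:
\begin{equation*}
h(S_\sigma) = S_\sigma f(S_\sigma) \leq \bigg(\sum_{j=1}^\sigma X_j\bigg)\bigg(\sum_{k=1}^\sigma f(X_k)\bigg) = \sum_{k=1}^\sigma h(X_k) + \sum_{\substack{1 \leq j,k \leq \sigma\\ j \neq k}} X_j f(X_k).
\end{equation*}
Taking expectations and conditioning on $\mathcal{M}$, the diagonal is at most $\E[\sigma] \cdot \max_{i,\ell} \E^i[h(S_1) \mid M_1 = \ell] < \infty$. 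For the off-diagonal, the conditional independence of $X_j$ and $X_k$ ($j \neq k$) gives
\begin{equation*}
\E\bigl[X_j f(X_k) \1_{\{\sigma \geq j \vee k\}}\bigr] = \E\bigl[\1_{\{\sigma \geq j \vee k\}} \phi_1(M_{j-1},M_j)\phi_2(M_{k-1},M_k)\bigr] \leq C^2 \Prob(\sigma \geq j \vee k),
\end{equation*}
and summing yields $\sum_{j \neq k} \Prob(\sigma \geq j \vee k) \leq 2\E[\sigma(\sigma+1)] < \infty$. Combining the two parts gives $\E[h(S_\sigma)] < \infty$.

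For the second assertion, since $h$ is nondecreasing and $S_k \leq S_\sigma$ for $0 \leq k < \sigma$, one has $\sum_{k=0}^{\sigma-1} h(S_k) \leq \sigma h(S_\sigma)$, so it suffices to show $\E[\sigma h(S_\sigma)] < \infty$. Redoing the decomposition above with an additional factor $\sigma$ inside each summand, the diagonal becomes $\max_{i,\ell}\E^i[h(S_1) \mid M_1 = \ell] \cdot \E[\sigma^2]$ and the off-diagonal is bounded by $C^2 \E[\sigma^2(\sigma-1)]$, both finite by the exponential moments of $\sigma$. I expect the main obstacle to be precisely this off-diagonal estimate: the cross terms $X_j f(X_k)$ admit no direct bound from $\E[h(X_1)] < \infty$, and the resolution rests on the crucial observation that all dependence among the $X_k$ is encoded in $\mathcal{M}$, so that conditioning turns the cross terms into products of two uniformly bounded conditional means, which is then absolutely summable against $\Prob(\sigma \geq j \vee k)$ by the exponential tail of $\sigma$.
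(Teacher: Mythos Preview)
Your proof is correct. Both your argument and the paper's rest on the same core subadditivity decomposition $h(S_\sigma)\le\sum_k h(X_k)+\sum_{j\ne k}X_jf(X_k)$, but the executions diverge. The paper first passes to a dominating i.i.d.\ renewal process: it couples $S_n=\sum_{k\le n}X_{k,M_{k-1},M_k}$ with $T_n:=\sum_{k\le n}\max_{i,j}X_{k,i,j}$, so that $S_n\le T_n$ and, crucially, $(T_n)_{n\ge0}$ is \emph{independent} of $\sigma$. This reduces everything to the single clean estimate $\E[h(T_n)]\le n^2\E[h(X_1)]$, after which $\E[h(T_\sigma)]=\sum_n\Prob(\sigma=n)\E[h(T_n)]$ and $\E[\sum_{k<\sigma}h(T_k)]=\sum_n\Prob(\sigma>n)\E[h(T_n)]$ follow immediately from the exponential tail of $\sigma$. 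You instead work directly with $(S_n)$ and replace the independence-by-domination trick with conditioning on $\mathcal{M}=\sigma(M_n:n\ge0)$: since $\sigma$ is $\mathcal{M}$-measurable and the $X_k$ are conditionally independent given $\mathcal{M}$, the off-diagonal terms factor into $\phi_1(M_{j-1},M_j)\phi_2(M_{k-1},M_k)$, and the uniform bound $C$ over the finitely many admissible pairs does the rest. Your route avoids the auxiliary coupling at the cost of verifying $\phi_1,\phi_2<\infty$ separately and handling the second assertion via the extra bound $\sum_{k<\sigma}h(S_k)\le\sigma h(S_\sigma)$ (which costs one more moment of $\sigma$, harmless here). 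The paper's route is slightly slicker once the coupling is in place, but yours is more direct and arguably more transparent about where the Markov-renewal structure enters.
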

\begin{proof}
Let $F_{i,j}(\cdot) := \Prob(S_1 \in \cdot | M_0=i, M_1=j)$,
$i,j=1,\ldots,p$ and assume that $X_{n,i,j}$, $n \in \N$,
$i,j=1,\ldots,p$ is a family of independent random variables with
$\Prob(X_{n,i,j} \in \cdot) = F_{i,j}(\cdot)$. Further, suppose
that the family of $X_{n,i,j}$ is independent of the driving chain
$(M_n)_{n \in \N_0}$. We assume without loss of generality that
$S_n = \sum_{k=1}^n X_{k,M_{k-1},M_k}$, $n \in \N_0$.
Now define $X_n := \max_{i,j=1,\ldots,p} X_{n,i,j}$, $n \in \N$ and $T_n := \sum_{k=1}^n
X_k$, $n \in \N_0$. $(T_n)_{n \in \N_0}$ is a zero-delayed renewal process with $S_n \leq T_n$ for all $n \in \N_0$.
What is more, $\E[h(T_1)] = \E[h(X_1)] \leq \sum_{i,j=1}^p \E[h(X_{1,i,j})] < \infty$.
Therefore, in view of the monotonicity of $h$, in order to prove the lemma,
it suffices to check that $\E[h(T_{\sigma})] < \infty$ and $\E \big[ \sum_{k=0}^{\sigma-1} h(T_k)\big] < \infty$.
Using subadditivity, we obtain
\begin{eqnarray}	\label{eq:estimate}
\E[h(T_n)]
& \leq &
\E\bigg[\sum_{k=1}^n X_k(f(X_k)+f(T_n-X_k))\bigg]
~=~	n \E[X_1f(X_1)] + \sum_{k=1}^n \E[X_k] \E[f(T_n-X_k)]	\notag	\\
& \leq &
n \E[h(X_1)] + n(n-1)\E[X_1] \E [f(X_1)]
~\leq~ n^2\E[h(X_1)].
\end{eqnarray}
It remains to observe that $(T_n)_{n \in \N_0}$ and $\sigma$ are
independent by construction and that $\sigma$ has some finite
exponential moments by irreducibility. Hence with the help of
\eqref{eq:estimate} we infer
\begin{equation*}
\E[h(T_{\sigma})] = \sum_{n \geq 1} \Prob(\sigma=n) \E[h(T_n)] < \infty
\quad	\text{and}	\quad
\E \bigg[\sum_{n=0}^{\sigma-1} h(T_n)\bigg] = \sum_{n \geq 0} \Prob(\sigma > n) \E[h(T_n)] < \infty.
\end{equation*}
\end{proof}

\begin{Rem} \label{Rem:function h}
Fix any $\varepsilon>0$ and notice that $f(x) := \log^{1+\varepsilon}(x+e^{\varepsilon})$,
$x \geq 0$ is concave with $f(0)=\varepsilon > 0$, hence subadditive.
Since $h(x) := x \log^{1+\varepsilon}(1+x) \sim x f(x)$ as $x \to +\infty$,
the lemma applies also for this particular choice of $h$.
\end{Rem}

\begin{Lemma}   \label{Lem:speed of excess}
Assume that $(S_n)_{n \geq 0}$ is a zero-delayed renewal process
and that $\E [S_1^{1+\delta}] < \infty$ for some $\delta > 0$. If
$\nu(t) := \inf\{n \in \N_0: S_n > t\}$ is the first passage time
into $(t,\infty)$ and $R_{t} = S_{\nu(t)}-t$ is the excess
at time $t$, then
\begin{equation*}
t^{\delta} \Prob(R_t> a t) ~\to~    0   \quad   \text{as } t \to \infty
\end{equation*}
for every $a > 0$.
\end{Lemma}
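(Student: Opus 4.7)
The plan is to bound $\Prob(R_t > at)$ by the probability that some jump of the renewal process is large, via a standard last-exit-style decomposition, and then apply a tail estimate coming from the assumption $\E[S_1^{1+\delta}] < \infty$ together with the elementary renewal theorem.

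First I would write $S_n - S_{n-1} = X_n$ and note that on $\{R_t > at\}$ we have $S_{\nu(t)-1} \leq t < S_{\nu(t)}$ and $S_{\nu(t)} - t > at$, so the increment $X_{\nu(t)}$ must exceed $at$. Decomposing over the value of $\nu(t)$ and using the independence of $X_n$ from $(X_1,\ldots,X_{n-1})$ (hence from $S_{n-1}$),
\begin{align*}
\Prob(R_t > at)
~\leq~ \sum_{n \geq 1} \Prob(S_{n-1} \leq t,\, X_n > at)
~=~ \Prob(X_1 > at)\sum_{n \geq 1}\Prob(S_{n-1} \leq t)
~=~ \Prob(X_1 > at)\, U(t),
\end{align*}
where $U(t) = \sum_{n \geq 0}\Prob(S_n \leq t) = \E[\nu(t)]$ is the renewal function.

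Next I would invoke the elementary renewal theorem: since $\E[S_1] < \infty$, $U(t)/t \to 1/\E[S_1]$ as $t \to \infty$, so there exists a finite constant $C$ with $U(t) \leq C(1+t)$ for all $t \geq 0$. Combining,
\begin{equation*}
t^{\delta}\Prob(R_t > at) ~\leq~ C (1+t)\, t^{\delta}\, \Prob(X_1 > at)
~\leq~ C' (at)^{1+\delta}\, \Prob(X_1 > at)
\end{equation*}
for all large $t$ and an appropriate constant $C' = C'(a,\delta)$.

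Finally I would note that the moment condition $\E[X_1^{1+\delta}] < \infty$ implies $x^{1+\delta}\Prob(X_1 > x) \to 0$ as $x \to \infty$, because
\begin{equation*}
x^{1+\delta} \Prob(X_1 > x) ~\leq~ \E\bigl[X_1^{1+\delta}\1_{\{X_1 > x\}}\bigr] ~\to~ 0
\end{equation*}
by the dominated convergence theorem. Substituting $x = at$ then gives $t^{\delta}\Prob(R_t > at) \to 0$, as required. There is no serious obstacle here; the only thing worth care is the use of independence in the union bound, which requires the process to be a classical zero-delayed renewal process rather than a general Markov renewal process, and this is exactly what the hypothesis provides.
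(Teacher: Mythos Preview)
Your proof is correct and follows essentially the same approach as the paper: both arrive at the bound $\Prob(R_t > at) \leq \Prob(S_1 > at)\,U(t)$ and then finish via $t^{1+\delta}\Prob(S_1 > at) \to 0$ together with the elementary renewal theorem. The only cosmetic difference is that the paper obtains this bound from the exact renewal formula $\Prob(R_t > at) = \int_{[0,t]}\Prob(S_1 > (a+1)t - y)\,U(\mathrm{d}y)$ and then estimates the integrand by $\Prob(S_1 > at)$, whereas you reach the same inequality through the union bound over $\{S_{n-1} \leq t,\, X_n > at\}$.
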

\begin{proof}
With $U$ denoting the renewal function, write
\begin{equation*}
t^\delta\Prob(R_t>at)
~=~ t^\delta\int_{[0,\,t]}\Prob(S_1>(a+1)t-y) \, U(\dy)
~\leq~ t^{1+\delta} \Prob(S_1>at)\frac{U(t)}{t}
\end{equation*}
and observe that
$\lim_{t \to \infty} t^{1+\delta}\Prob(S_1>at)=0$ in view of $\E
[S_1^{1+\delta}]<\infty$, while $\lim_{t \to \infty} t^{-1}U(t) = \E [S_1] < \infty$
by the elementary renewal theorem.
\end{proof}
\end{appendix}

\footnotesize
\noindent   {\bf Acknowledgements}  \quad
The authors thank Olle Nerman for helpful suggestions.

\bibliographystyle{abbrv}

\vspace{0.1cm}
\noindent
\textsc{Alexander Iksanov   \\
Faculty of Cybernetics  \\
National T.\ Shevchenko University of Kyiv,\\
01601 Kyiv, Ukraine,    \\
Email: iksan@univ.kiev.ua}   \\
\vspace{0.05cm}

\noindent
\textsc{Matthias Meiners    \\
Fachbereich Mathematik  \\
Technische Universit\"at Darmstadt  \\
64289 Darmstadt, Germany    \\
Email: meiners@mathematik.tu-darmstadt.de}
\end{document}